\def\th@plain{%
  \thm@notefont{}% same as heading font
  \itshape % body font
}
\def\th@definition{%
  \thm@notefont{}% same as heading font
  \normalfont % body font
}
\theoremstyle{plain}
\newtheorem{theorem}{Theorem}[section]
\newtheorem{lemma}[theorem]{Lemma}
\newtheorem{proposition}[theorem]{Proposition}
\newtheorem{corollary}[theorem]{Corollary}
\newtheorem*{corollary*}{Corollary}
\theoremstyle{definition}
\newtheorem{definition}[theorem]{Definition}
\newtheorem{remark}[theorem]{Remark}
\newcommand{\Q}{\mathbb{Q}}
\newcommand{\N}{\mathbb{N}}
\newcommand{\I}{^{-1}}
\DeclareMathOperator{\Aut}{Aut}
\DeclareMathOperator{\lk}{lk}
\DeclareMathOperator{\st}{st}
\DeclareMathOperator{\dlk}{{\lk}{\downarrow}}
\DeclareMathOperator{\dst}{{\st}{\downarrow}}
\DeclareMathOperator{\BU}{BU}
\DeclareMathOperator{\DBU}{{BU}{\downarrow}}
\DeclareMathOperator{\SBU}{SBU}
\DeclareMathOperator{\Sid}{{\Sigma}{\downarrow}}
\newcommand{\SAut}{\Sigma\!\Aut}  %no space between Sigma and Aut
\newcommand{\defeq}{\mathbin{\vcentcolon =}}
\numberwithin{equation}{section}
\begin{document}

\title[Stability for partially symmetric automorphism groups]{Rational homological stability for groups of partially symmetric automorphisms of free groups}

\author{Matthew C. B. Zaremsky}
\address{Fakult\"at f\"ur Mathematik \\
Universit\"at Bielefeld \\
Bielefeld, Germany 33615}
\email{zaremsky@math.uni-bielefeld.de}

\date{\today}
\subjclass[2010]{Primary 20F65; Secondary 20F28, 57M07}%geometric gp thry; aut gps of gps, top methods in gp thry
\keywords{Partially symmetric automorphism, homological stability}
\thanks{The author was supported by the SFB~$701$ of the DFG}

\begin{abstract}
Let $F_{n+m}$ be the free group of rank $n+m$, with generators $x_1,\dots,x_{n+m}$. An automorphism $\phi$ of $F_{n+m}$ is called partially symmetric if for each $1\le i\le m$, $\phi(x_i)$ is conjugate to $x_j$ or $x_j\I$ for some $1\le j\le m$. Let~$\SAut_n^m$ be the group of partially symmetric automorphisms. We prove that for any $m\ge 0$ the inclusion $\SAut_n^m\to\SAut_{n+1}^m$ induces an isomorphism in rational homology for dimensions~$i$ satisfying $n\ge (3(i+1)+m)/2$, with a similar statement for the groups $P\SAut_n^m$ of pure partially symmetric automorphisms. We also prove that for any $n\ge 0$ the inclusion $\SAut_n^m\to\SAut_n^{m+1}$ induces an isomorphism in rational homology for dimensions~$i$ satisfying $m>(3i-1)/2$.
\end{abstract}

\maketitle

\section{Introduction}
\label{sec:intro}

Let $\Aut(F_{n+m})$ be the group of automorphisms of the free group $F_{n+m}$. For a fixed basis~$\{x_1,\dots,x_{n+m}\}$ of $F_{n+m}$, an automorphism~$\phi$ of $F_{n+m}$ is called \emph{partially symmetric} if for each $1\le i\le m$, $\phi(x_i)$ is conjugate to~$x_j$ or~$x_j\I$ for some~$1\le j\le m$. If $\phi$ is an automorphism such that each $\phi(x_i)$ is even conjugate to~$x_i$ we call~$\phi$ \emph{pure partially symmetric}. Call these first~$m$ generators \emph{distinguished} and the other~$n$ \emph{undistinguished}. Let~$\SAut_n^m$ be the group of partially symmetric automorphisms of $F_{n+m}$, and $P\SAut_n^m$ the group of pure partially symmetric automorphisms.

We prove that the rational homology of these groups is stable in the parameters~$n$ and~$m$, and the rational homology of~$\SAut_n^m$ is also stable in~$m$.  This means that the rational homology is independent of the parameters once they are large enough.  This question was posed in \cite{mcewen_thesis}, where a general strategy was outlined, involving a hypothetical Morse function on a space introduced in \cite{bux09}.  As a first step, in \cite{mcewen_thesis,zar_degree_thm} a Morse function was constructed for the spine of Auter space, which provided a simplified proof of the so called Degree Theorem of \cite{hatcher98}. From the Degree Theorem, the rational homological stability of $\Aut(F_n)=\SAut_n^0$ can be deduced. With this Morse-theoretic approach in hand for the classical case, it was supposed that one should then be able to generalize the situation to~$\SAut_n^m$, but this was left in the conjectural stage in \cite{mcewen_thesis}. In the present work we complete this project; namely, we exhibit a Morse function that yields a generalized version of the Degree Theorem, from which we deduce rational homological stability for $\SAut_n^m$.

To keep the notation straight, we mention that in \cite{bux09} the ``outer'' version of the group we are calling $P\SAut_n^m$ is denoted $P\Sigma(n,k)$, where~$n$ is the rank and~$k$ the number of distinguished generators. In \cite{jensen04} the same group is denoted $A_n^k$, where~$n$ and~$k$ are the number of undistinguished and distinguished generators, respectively. In \cite{jensen04} certain other groups denoted $A_{n,k}$ are considered, which are central extensions of~$A_n^k$, but these are not the same as the groups~$\SAut_n^m$ considered here. For example, the automorphisms that properly permute the distinguished generators of $F_{n+m}$ appear only in $\SAut_n^m$, and not in $P\SAut_n^m=A_n^m$ or in $A_{n,m}$.

\medskip

The relevant existing results are as follows. In \cite{hatcher98} it is shown that the homology of $\Aut(F_n)=\SAut_n^0$ is stable with respect to~$n$. In \cite[Corollary~1.2]{galatius11} the stable rational homology is even shown to be trivial, namely, $H_i(\Aut(F_n);\Q)=0$ for all $n>2i+1$. At the other end of the spectrum, in \cite{hatcher10} it is shown that the group of \emph{symmetric automorphisms} $\SAut(F_m)=\SAut_0^m$ is homologically stable in~$m$, and it turns out the rational homology actually vanishes in every dimension \cite{griffin12,wilson12}. In contrast, the pure case is quite different. The rational homology of~$P\SAut_0^m$ is not stable in~$m$ \cite{jensen04}, and in fact the cohomology ring has been completely computed \cite{jensen06}. To use the notation of \cite{jensen04}, so $P\SAut_0^m$ is denoted $A_n^m$, while the $A_0^m$ are not homologically stable, the groups $A_{n,m}$ are in fact stable in $n$ and $m$ \cite{hatcher05}. We remark that the methods used to prove stability for $A_{n,m}$ are very different from how we will prove stability for $\SAut_n^m$ here.

We actually obtain stability results for a range of families of subgroups of~$\SAut_n^m$, which includes the groups $P\SAut_n^m$. Consider any family of groups~$G_n^m$ such that
$$P\SAut_n^m\le G_n^m\le\SAut_n^m$$
for each~$n$ and~$m$, and such that the inclusion
$$\SAut_n^m\hookrightarrow \SAut_{n+1}^m,$$
given by extending $\phi\in\SAut_n^m$ to $F_{m+n+1}$ via $\phi(x_{n+m+1})=x_{n+m+1}$, restricts to an inclusion $G_n^m\hookrightarrow G_{n+1}^m$. Of course $P\SAut_n^m$ and~$\SAut_n^m$ themselves are examples of such families of groups. Our main result for these groups is the following theorem.

\begin{theorem}[Stability in $n$]\label{hom_stab_thm}
 For any $m\ge 0$ and $i\ge 0$, and any family of groups~$G_n^m$ satisfying the above conditions, the map
$$H_i(G_n^m;\Q)\to H_i(G_{n+1}^m;\Q)$$
induced by inclusion is an isomorphism for $n\ge (3(i+1)+m)/2$.
\end{theorem}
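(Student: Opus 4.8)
The plan is to carry out, for the partially symmetric groups, the Morse-theoretic program sketched in the introduction. First I would take the contractible space introduced in \cite{bux09} on which $\SAut_n^m$ acts and pass to its spine $L_n^m$ --- the geometric realization of a poset of basepointed marked graphs carrying $m$ distinguished loops, ordered by collapsing forests --- noting that every intermediate group $G_n^m$ acts on $L_n^m$ as well. The stabilization $F_{n+m}\hookrightarrow F_{n+m+1}$, extending an automorphism by fixing $x_{n+m+1}$, induces a $G$-equivariant inclusion $L_n^m\hookrightarrow L_{n+1}^m$ realizing the map of the theorem, so the whole argument takes place inside a single tower $L_0^m\subset L_1^m\subset\cdots$. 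Since $L_n^m$ is contractible, the equivariant homology spectral sequence for the $G_n^m$-action computes $H_*(G_n^m;\Q)$ from the rational homology of cell stabilizers, and these split --- up to finite-index and finite-kernel corrections --- as products of groups of the same type but of smaller rank.

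The next, central step is to equip the basepointed marked graphs with a \emph{degree} generalizing the Hatcher--Vogtmann degree $\sum_v(\mathrm{val}(v)-2)$, summed over non-basepoint vertices, by a correction term recording the behavior at the $m$ distinguished loops, and to treat it as a Morse function on $L_n^m$. The descending link $\dlk(v)$ of a vertex is then assembled from the classical blow-up and splitting complexes together with new pieces contributed by the distinguished loops. The crux of the whole proof --- and the step I expect to be the main obstacle --- is to show these descending links are highly connected, with connectivity growing linearly in $n$ and degrading only by about $m/2$ as $m$ grows. Granting this, standard discrete Morse theory yields a \emph{generalized Degree Theorem}: the inclusion $L_n^{m,\le d}\hookrightarrow L_n^m$ of the subcomplex of marked graphs of degree at most $d$ induces an isomorphism on $H_i(-;\Q)$ whenever $i$ is at most a suitable linear function of $d$ and $m$.

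Stability is then deduced by combining the generalized Degree Theorem with a stability statement for the bounded-degree pieces themselves. For a fixed $i$, one picks $d$ linear in $i$ and $m$ so that $H_i(G_n^m;\Q)$ is computed by the $G_n^m$-equivariant rational homology of $L_n^{m,\le d}$, and one shows that the stabilization induces an isomorphism on the latter once $n$ is large relative to $d$. By induction on $d$ this reduces to the structure of the degree filtration of $L_n^{m,\le d}$: its strata are governed by marked graphs of strictly smaller rank, where the inductive hypothesis applies, and the stabilization respects the filtration, so a spectral-sequence (or mapping-cone) comparison closes the induction. Pushing the connectivity estimates of the previous step through this bookkeeping is exactly what yields the threshold $n\ge(3(i+1)+m)/2$; the slope-$\tfrac12$ dependence on $m$ is precisely what the descending-link connectivity estimate must deliver, which is why that estimate is the delicate point.

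Finally, the passage from $\SAut_n^m$ to a general admissible family $G_n^m$ costs nothing: the complex $L_n^m$, the degree function, and the high connectivity of the descending links are features of the complex alone, independent of which group acts, and the stabilizer bookkeeping above applies verbatim with $G_n^m$ in place of $\SAut_n^m$ since $G_n^m$ meets each cell stabilizer in a subgroup which is again, up to finite-index and finite-kernel corrections, a product of groups of the same type. Hence the induction runs uniformly over all families with $P\SAut_n^m\le G_n^m\le\SAut_n^m$.
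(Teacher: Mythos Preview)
Your outline of the Morse-theoretic step --- filtering a contractible spine by a degree-like function and proving high connectivity of descending links to get a generalized Degree Theorem --- matches the paper's strategy closely (the paper's invariant is the \emph{weighted degree} $d_w=d_0-c$ on the subcomplex $\nabla K_n^m$ where the basepoint avoids the distinguished cycles, and the connectivity bound is exactly $(k-1)$-connected for weighted degree $\le k$).

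Where you diverge is in the passage from the Degree Theorem to stability. You propose an equivariant spectral sequence in which cell stabilizers ``split, up to finite-index and finite-kernel corrections, as products of groups of the same type but of smaller rank,'' followed by an induction on degree comparing strata. But in this complex the cell stabilizers are \emph{finite} (they are just graph-automorphism groups), not products of smaller $\SAut$-type groups; so there is no inductive content in the stabilizers, and your proposed spectral-sequence induction does not get off the ground as you describe it. What the paper actually does is exploit precisely this finiteness: since stabilizers are finite, the rational homology of $G_n^m$ agrees with that of the orbit space $\nabla K_{n,k}^m/G_n^m$ in the relevant range, and the stabilization map becomes the concrete map $\nu\colon \Gamma\mapsto\Gamma\vee S^1$ between orbit spaces. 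Stability is then proved by a direct geometric argument (Lemma~\ref{detect_things_at_p} and Proposition~\ref{orbit_spaces_n_stable}): when $(3k+m)/2<n+1$, every graph of weighted degree $\le k$ has a loop or a theta wedge summand at the basepoint, so $\nu$ is a homotopy equivalence. This ``detecting features at the basepoint'' step is the real source of the threshold $n\ge(3(i+1)+m)/2$, and it has no analogue in your outline.

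Your final paragraph on why the argument is uniform in the choice of $G_n^m$ is also slightly off for the same reason: the uniformity comes not from any stabilizer bookkeeping but simply from the fact that the loop/theta detection and the retraction of Proposition~\ref{orbit_spaces_n_stable} take place entirely on the level of orbit spaces and work verbatim for any $G_n^m$ between $P\SAut_n^m$ and $\SAut_n^m$.
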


\begin{corollary*}
 The rational homology of~$\SAut_n^m$ is stable in~$n$, as is the rational homology of $P\SAut_n^m$.\qed
\end{corollary*}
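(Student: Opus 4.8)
The plan is to carry out, in the partially symmetric setting, the Morse-theoretic program recalled in the introduction. The first step is geometric: each group $G_n^m$ acts with finite stabilizers on a contractible complex $X_n^m$ — the spine of a partially symmetric, pointed version of Auter space, a variant of the space introduced in \cite{bux09} — whose vertices are marked graphs with $m$ distinguished loops and an undistinguished part of rank $n$. Since $X_n^m$ is contractible and the stabilizers are finite, $H_i(G_n^m;\Q)\cong H_i(X_n^m/G_n^m;\Q)$, and the stabilization $G_n^m\hookrightarrow G_{n+1}^m$ is realized by wedging on a single undistinguished loop; the whole question is thereby transported to a comparison of the quotient spaces $X_n^m/G_n^m$.

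The second step is to equip $X_n^m$ with a \emph{degree function}, generalizing the Hatcher--Vogtmann degree — the weighted valence count $\sum_{v\neq *}(\mathrm{val}(v)-2)$ over the non-basepoint vertices — by a correction term recording how the $m$ distinguished loops sit inside the graph, chosen to be $G_n^m$-invariant so that it descends to the quotient and serves as a discrete Morse function. The technical heart is then a \textbf{Generalized Degree Theorem}: the sublevel subcomplex $X_n^m(\le d)$ of graphs of degree at most $d$ is highly connected, the connectivity growing linearly in $d$ with slope essentially $2/3$ and with a penalty linear in $m$. I would prove this by discrete Morse theory, reducing it to the claim that the descending link $\dlk(\Gamma)$ of every graph $\Gamma$ of degree exceeding $d$ is highly connected: describing $\dlk(\Gamma)$ as a complex built from the degree-reducing elementary moves (blow-ups and collapses) available at $\Gamma$ that are compatible with the partial symmetry, which decomposes as a join — over the non-basepoint vertices of $\Gamma$ — of local ``constrained partition/sphere'' complexes, one bounds its connectivity by Cohen--Macaulay-type estimates for those local pieces, tracking carefully how the distinguished loops incident to a vertex restrict the admissible moves there.

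I expect this descending-link analysis to be the main obstacle. The local complexes attached to vertices that meet distinguished loops are genuinely more delicate than in the classical case $m=0$: the signed-permutation symmetries of the distinguished generators act on them, and it is precisely here that one is driven to rational coefficients, since quotienting by these finite symmetry groups (equivalently, a transfer argument) preserves only rational, not integral, high connectivity. This both forces the conclusion to concern $H_*(-;\Q)$ and is responsible for the $m/2$ loss in the final range. A secondary point to verify is that the correction term in the degree can be arranged to be simultaneously $G_n^m$-invariant and Morse-admissible — no two adjacent vertices of $X_n^m$ share a degree, after breaking ties if necessary.

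The final step is to deduce Theorem~\ref{hom_stab_thm} from the Generalized Degree Theorem by the now-standard Hatcher--Vogtmann spectral-sequence argument. Filtering $X_n^m/G_n^m$ and $X_{n+1}^m/G_{n+1}^m$ by degree, one observes that a graph of bounded degree and sufficiently high rank is forced to carry enough loops at the basepoint to split off an undistinguished loop compatibly with the distinguished part, so that the cells appearing at each stage of the filtration have, up to the group action, isotropy groups among the smaller groups $G_{n-k}^m$. Comparing the two degree spectral sequences and inducting on $n$ then promotes the connectivity estimate of the Degree Theorem to the asserted isomorphism range $n\ge (3(i+1)+m)/2$. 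The Corollary is immediate, since $\SAut_n^m$ and $P\SAut_n^m$ are themselves among the admissible families $G_n^m$.
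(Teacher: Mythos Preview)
Your high-level outline matches the paper: act on a contractible complex with finite stabilizers, filtrate by a generalized degree, prove a Generalized Degree Theorem via Morse theory on descending links, then compare quotients by the wedge-on-a-loop map. The final sentence is also correct: the Corollary is immediate once Theorem~\ref{hom_stab_thm} is in hand.

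However, two of your key structural claims are wrong, and they matter because they misidentify where the real work lies. First, the descending link does \emph{not} decompose as a join over the non-basepoint vertices of~$\Gamma$. What actually happens (Section~\ref{sec:morse}) is that $\dlk(\Gamma)$ splits as the join of an \emph{up-link} (descending blow-ups) and a \emph{down-link} (descending forest collapses); only the up-link further decomposes vertex-by-vertex as $\ast_v\Sid(v)$ (Lemma~\ref{global_blowups}), while the down-link is a global poset of admissible forests $P(\Gamma)$ whose analysis (Proposition~\ref{dlk_conn}) is an induction on the number of admissible edges, not a local-to-global join. The down-link analysis is in fact the subtler half here, because admissibility of forests interacts globally with the distinguished cycles.

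Second, your account of why rational coefficients appear, and why there is an $m/2$ penalty, is incorrect. The connectivity of $\nabla K_{n,k}^m$ in Theorem~\ref{thrm:sublev_conn} is genuinely integral; no transfer or quotient-by-finite-group argument is used on the descending links, and the signed-permutation symmetries of the distinguished generators play no role there. Rational coefficients enter only at the very beginning, in the standard passage from $G_n^m$ to the orbit space $\nabla K_{n,k}^m/G_n^m$ (Lemma~\ref{gp_to_space_hlgy}), because the stabilizers are finite but nontrivial. The $m/2$ shift in the range comes instead from the \emph{weighted degree} $d_w = 2n+m-val_w(p)$ and the loop-detection Lemma~\ref{detect_things_at_p}: a graph with $m$ distinguished cycles disjoint from~$p$ is only forced to have a loop (resp.\ theta) at the basepoint once $n>2d_w+m$ (resp.\ $n>(3d_w+m)/2$). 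The final comparison (Proposition~\ref{orbit_spaces_n_stable}) is a direct retraction of the quotient onto the image of $\nu$, not a spectral-sequence argument.
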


We also consider stability in the other parameter,~$m$. Renumber the elements of the basis by $\{x_1,\dots,x_n,x_{n+1},\dots,x_{n+m}\}$, so an automorphism $\phi$ is partially symmetric if for all $1\le i\le m$, $\phi(x_{n+i})$ is conjugate to $x_{n+j}$ or $x_{n+j}\I$ for some $1\le j\le m$. We now have a natural inclusion map
$$\SAut_n^m\hookrightarrow \SAut_n^{m+1},$$
given by extending $\phi\in\SAut_n^m$ to $F_{n+m+1}$ via $\phi(x_{n+m+1})=x_{n+m+1}$.

\begin{theorem}[Stability in $m$]\label{hom_stab_thm2}
 For any $n\ge 0$ and $i\ge 0$, the map
$$H_i(\SAut_n^m;\Q)\to H_i(\SAut_n^{m+1};\Q)$$
induced by inclusion is an isomorphism for $m>(3i-1)/2$.
\end{theorem}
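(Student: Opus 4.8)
The plan is to run the standard homological-stability machine, using the Morse-theoretic tools this paper has already developed. I would begin with an elementary reduction. The Lyndon--Hochschild--Serre spectral sequence of the extension
\[
1 \longrightarrow P\SAut_n^m \longrightarrow \SAut_n^m \longrightarrow W_m \longrightarrow 1,
\]
where $W_m$ is the finite group of signed permutations of the $m$ distinguished generators, collapses over $\Q$ (all higher homology of a finite group with rational coefficients vanishes) and yields $H_i(\SAut_n^m;\Q)\cong H_i(P\SAut_n^m;\Q)_{W_m}$. This already explains why the theorem can hold for the full groups but not for the $P\SAut_n^m$ (for which it fails at $n=0$ by \cite{jensen04}): the unstable rational homology of $P\SAut_n^m$ must die after passing to $W_m$-coinvariants. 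In particular, any complex used below must carry an action of all of $\SAut_n^m$, not merely of $P\SAut_n^m$, and the signed-permutation symmetry must be used in an essential way.

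The crux is to produce an $\SAut_n^m$-complex $Y_n^m$ whose connectivity grows linearly in $m$ with the characteristic slope $\tfrac23$ and whose simplex stabilizers are, rationally and in the range of degrees of interest, copies of the groups $\SAut_n^{m'}$ for smaller $m'$. The natural candidate has as its $p$-simplices the compatible configurations of $p+1$ ways of splitting a single distinguished generator off of $F_{n+m}$ --- the $\SAut_n^m$-equivariant analogue, adapted to the distinguished generators, of the arc and sphere complexes from the classical stability arguments and of the complexes underlying \cite{bux09}. One then needs: (i) $\SAut_n^m$ acts on $Y_n^m$ with finitely many simplex-orbits in each dimension, and the inclusion $\SAut_n^m\hookrightarrow\SAut_n^{m+1}$ identifies $Y_n^m$ with the link of a ``new'' vertex of $Y_n^{m+1}$; (ii) the stabilizer of a $p$-simplex is an extension of a finite group by a group whose rational homology agrees, in degrees $\le i$, with that of $\SAut_n^{m-1-p}$; and (iii) $Y_n^m$ is $\lfloor(2m-c)/3\rfloor$-connected for a small constant $c$. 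The connectivity estimate (iii) is where the generalized Degree Theorem of this paper is meant to enter: either $Y_n^m$ is assembled directly from the sublevel sets of the Morse function on the Bux space, so that (iii) is a corollary of the Degree Theorem, or one runs the usual ``bad-simplex'' induction to reduce the connectivity of $Y_n^m$ to that of its links, which are complexes of the same shape with $m$ replaced by something smaller.

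Given the complex, the conclusion is routine. Fix $n$ and set $R_i^m\defeq H_i(\SAut_n^{m+1},\SAut_n^m;\Q)$. The high connectivity of $Y_n^{m+1}$ together with the stabilizer description in (ii) produces, in total degrees up to $\lfloor(2(m+1)-c)/3\rfloor$, a spectral sequence whose $E^1$-page is built out of the groups $R_*^{m'}$ with $m'<m$; feeding in the inductive hypothesis $R_i^{m'}=0$ for $m'>(3i-1)/2$, a triangularity argument forces $R_i^m=0$ for $m>(3i-1)/2$, which is the assertion of Theorem~\ref{hom_stab_thm2}. The induction in $n$ is anchored at $n=0$, where $\SAut_0^m=\SAut(F_m)$ has $H_i(\SAut(F_m);\Q)=0$ for all $i>0$ and all $m$ by \cite{griffin12,wilson12}, so that $R_i^m=0$ for every $i$ and $m$.

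The genuinely delicate point is (ii). Splitting a distinguished generator off of $F_{n+m}$ does not literally return an $\SAut_n^{m-1}$; it leaves the group of partially symmetric automorphisms of a free group of the \emph{same} rank that conjugate-fix the split-off generator, and one must argue --- presumably by combining an explicit retraction with Theorem~\ref{hom_stab_thm} to absorb a shift in the undistinguished rank --- that this group has the same rational homology as $\SAut_n^{m-1}$ in the relevant range, and then keep track of all such shifts carefully enough that the numerology closes to the clean bound $m>(3i-1)/2$. The connectivity bound (iii) is the other pressure point: the slope in the statement is forced by, and must agree with, the slope emerging from the Degree Theorem.
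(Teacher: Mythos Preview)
Your proposal outlines the classical Quillen/Randal-Williams--Wahl stability machinery, but leaves its three load-bearing ingredients --- the complex $Y_n^m$, its connectivity, and the identification of its simplex stabilizers with smaller groups $\SAut_n^{m'}$ --- at the level of desiderata rather than arguments. You yourself flag (ii) as ``genuinely delicate'' and (iii) as ``the other pressure point'', and no candidate for $Y_n^m$ is actually defined. As written this is a plausible plan, not a proof; in particular, it is not clear that splitting off a distinguished generator really yields stabilizers whose rational homology matches $\SAut_n^{m-1-p}$ in the needed range, nor how the Generalized Degree Theorem would produce the connectivity of your $Y_n^m$.

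More to the point, the paper does not follow this route at all. It argues directly at the level of \emph{orbit spaces}, in the style of Hatcher--Vogtmann, and never runs a stability spectral sequence. Since $\SAut_n^m$ acts on $\nabla K_{n,k}^m$ with finite stabilizers and finite quotient $\nabla Q_{n,k}^m$, and the Generalized Degree Theorem gives that $\nabla K_{n,k}^m$ is $(k-1)$-connected for $0\le k<N$, one has $H_i(\nabla Q_{n,k}^m;\Q)\cong H_i(\SAut_n^m;\Q)$ for $i<k$ (Lemma~\ref{gp_to_space_hlgy}). The explicit map
\[
\mu\colon \nabla Q_{n,k}^m \hookrightarrow \nabla Q_{n,k}^{m+1}
\]
that wedges a distinguished lollipop at the basepoint (which preserves weighted degree, this being the very reason weighted degree is defined as it is) is then shown to be a homotopy equivalence whenever $3k/2<m+1$ (Proposition~\ref{orbit_spaces_m_stable}). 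That proposition is proved by a counting lemma detecting lollipops and double lollipops at the basepoint (Lemma~\ref{detect_things_at_p}), together with a short Morse-theoretic check --- using the height function $h$ on the quotient --- that each vertex outside the image of $\mu$, which must carry a double-lollipop wedge summand, has contractible descending link. Setting $k=i+1$ and unwinding the inequalities gives the bound in the theorem; the case $n=0$ is handled by citing the vanishing of $H_*(\SAut(F_m);\Q)$ from \cite{griffin12,wilson12}. No auxiliary complex, no spectral sequence, no stabilizer analysis. Your observation that the signed-permutation symmetry is essential is correct and is visible here too: the contractibility of the descending link for a Type~$2$ double lollipop in Proposition~\ref{orbit_spaces_m_stable} uses that the two edges of the distinguished loop are identified in $\nabla Q_{n,k}^m$, which fails for $P\SAut_n^m$.
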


In Section~\ref{sec:auterspace} we provide some background on the spine of Auter space $K_{n+m}$, and describe a subcomplex~$\nabla K_n^m$ that admits a nice~$\SAut_n^m$ action. We also filtrate~$\nabla K_n^m$ using the notion of \emph{weighted degree}, a generalization of \emph{degree} from \cite{hatcher98}. In Section~\ref{sec:morse} we define a height function~$h$ on~$\nabla K_n^m$, which generalizes the height function from the classical case, constructed in \cite{zar_degree_thm}. In Section~\ref{sec:hom_stab} we show how the Generalized Degree Theorem, Theorem~\ref{thrm:sublev_conn}, yields our homological stability results, and in Section~\ref{sec:connectivity} we prove the Generalized Degree Theorem. To prove this, we show that descending links with respect to our height function~$h$ are highly connected. This is done by separately considering two join factors, the \emph{down-link}, in Section~\ref{sec:down_conn}, and the \emph{up-link}, in Section~\ref{sec:up_conn}.

\subsection*{Acknowledgments} The author is grateful to Kai-Uwe Bux for his guidance in this project, and to James Griffin, Rob McEwen and Jenny Wilson for their helpful advice along the way.

\section{Auter space and our space of interest}\label{sec:auterspace}

We will analyze the homology of~$\SAut_n^m$ by considering its action on a certain simplicial complex. Our starting point is the well-studied \emph{spine of Auter space}~$K_n$ introduced in \cite{hatcher98}. Let~$R_n$ be the rose with~$n$ edges, i.e., the graph with a single vertex~$p_0$ and~$n$ edges. Here by a \emph{graph} we mean a connected one-dimensional CW-complex, with the usual notions of vertices and edges. We identify~$F_n$ with $\pi_1(R_n)$. If~$\Gamma$ is a graph with basepoint vertex~$p$, a homotopy equivalence $\rho \colon R_n \to \Gamma$ is called a \emph{marking} on~$\Gamma$ if~$\rho$ takes~$p_0$ to~$p$. We will consider two markings to be equivalent if there is a basepoint-preserving homotopy between them. Also, we only consider graphs such that~$p$ is at least bivalent and all other vertices are at least trivalent. Note that we do allow \emph{separating edges}, that is edges whose complement in the graph is disconnected.

For graphs~$\Gamma_1$ and~$\Gamma_2$, a basepoint-preserving homotopy equivalence $d \colon \Gamma_1 \to \Gamma_2$ is called a \emph{forest collapse} or a \emph{blow-down} if it amounts to collapsing a subforest of~$\Gamma_1$. The reverse of a blow-down is, naturally, called a \emph{blow-up}. This gives us a partial ordering on equivalence classes of triples $(\Gamma,p,\rho)$, namely $(\Gamma',p,\rho') \le (\Gamma,p,\rho)$ if there is a forest collapse $d \colon \Gamma \to \Gamma'$ such that~$\rho'$ is equivalent to $d\circ\rho$. The spine $K_n$ of Auter space is then the geometric realization of the poset of equivalence classes of triples $(\Gamma,p,\rho)$ with~$\Gamma$ a rank~$n$ graph, with this partial ordering. In particular the vertices of~$K_n$ are equivalence classes of marked basepointed graphs.

Since we are identifying~$F_n$ with $\pi_1(R_n)$, we can also identify $\Aut(F_n)$ with the group of basepoint-preserving homotopy equivalences of~$R_n$, up to homotopy. This is of course the same as the group of markings of~$R_n$, so we can denote markings on~$R_n$ by elements of $\Aut(F_n)$. There is a (right) action of $\Aut(F_n)$ on $K_n$ in the following way: given a vertex $(\Gamma,p,\rho)$ in~$K_n$ and $\phi\in\Aut(F_n)$, we have
$$\phi(\Gamma,p,\rho)=(\Gamma,p,\rho\circ\phi).$$
In particular this action only affects markings, and it is easy to see that $\Aut(F_n)$ permutes markings arbitrarily.

\noindent\textbf{Viable marked graphs:} To analyze the groups~$\SAut_n^m$ we first pass to a certain (full) subcomplex~$\Delta K_n^m$ of~$K_{n+m}$. The vertices of~$\Delta K_n^m$ are marked basepointed graphs $(\Gamma,p,\rho)$ such that~$\Gamma$ is a \emph{viable graph} and~$\rho$ is an \emph{admissible marking}. A graph is viable if it contains~$m$ reduced cycles $C_1,\dots,C_m$ in~$\Gamma$ that are pairwise disjoint. See Figure~\ref{fig:viable_graph} for an example. A marking~$\rho$ is called admissible if there is a maximal tree~$T$ in~$\Gamma$ such that for $\pi \colon \Gamma \to \Gamma/T = R_{n+m}$, we have $\pi\circ\rho\in \SAut_n^m$ (recall our identification of $\Aut(F_{n+m})$ with markings on~$R_{n+m}$), and the reduced cycles~$C_i$ obtained by reducing $\rho(x_i)$ for $1\le i\le m$ are pairwise disjoint. See \cite{bux09,mcewen_thesis} for more details. For brevity we will just define a \emph{viable marked graph} to be a viable graph with an admissible marking. The cycles~$C_i$ for $1\le i\le m$ are called \emph{distinguished cycles}, and we similarly refer to vertices, edges, and half-edges as \emph{distinguished} if they are contained in some~$C_i$. A forest~$F$ in a viable marked graph~$\Gamma$ is called \emph{admissible} if~$\Gamma/F$ is again viable and the induced marking is again admissible. The characterizing property of admissible forests is that any tree~$T$ in an admissible forest can intersect at most one distinguished cycle~$C$, and if~$T\cap C$ is nonempty then it must either be a single vertex or a connected edge path in~$C$. An example of an admissible and an inadmissible forest (for some marking~$\rho$) are shown in gray in Figure~\ref{fig:viable_graph}.

\begin{figure}[htb]
    \centering
    \begin{tikzpicture}%viable graph
  \draw[line width=0.7pt]
   (-0.8,0.85) -- (0,0) -- (1.6,0.5)
   (-0.6,1.1) -- (0.2,0.5) -- (0,0) -- (1.6,1.5) -- (0.2,0.5)
   (1.05,1.8) -- (1.6,1.5)
   (-0.6,1.3) -- (0.3,1.9)
   (-0.25,1.55) -- (0.6,0.8)
   (0.3,1.9) to [out=175, in=75,looseness=1] (-0.9,1.6)
   (0,0) to [out=140, in=20,looseness=1] (-1,0.4)
   (-1,0.4) to [out=250, in=160,looseness=0.5] (0,0);
  \filldraw
   (0,0) circle (1.5pt);
  \draw[line width=2.3pt]
   (2,0.6) circle (0.4 cm)
   (1.9,1.8) circle (0.4 cm)
   (0.7,2) circle (0.4 cm)
   (-1,1.2) circle (0.4 cm);

  \begin{scope}[xshift=4.1cm]%admissible forest
  \draw[line width=0.7pt]
   (0,0) -- (1.6,0.5)
   (-0.25,1.55) -- (0.6,0.8)
   (1.6,1.5) -- (0.2,0.5);
  \draw[line width=2pt, gray]
   (0,0) -- (1.6,0.5)
   (-0.25,1.55) -- (0.6,0.8)
   (1.6,1.5) -- (0.2,0.5);
  \draw[line width=0.7pt]
   (-0.8,0.85) -- (0,0)
   (-0.6,1.1) -- (0.2,0.5) -- (0,0) -- (1.6,1.5)
   (1.05,1.8) -- (1.6,1.5)
   (-0.6,1.3) -- (0.3,1.9)
   (0.3,1.9) to [out=175, in=75,looseness=1] (-0.9,1.6)
   (0,0) to [out=140, in=20,looseness=1] (-1,0.4)
   (-1,0.4) to [out=250, in=160,looseness=0.5] (0,0);
  \filldraw
   (0,0) circle (1.5pt);
  \draw[line width=2.3pt]
   (2,0.6) circle (0.4 cm)
   (1.9,1.8) circle (0.4 cm)
   (0.7,2) circle (0.4 cm)
   (-1,1.2) circle (0.4 cm);

  \end{scope}

  \begin{scope}[xshift=8.2cm]%non-admissible forest
  \draw[line width=0.7pt]
   (-0.8,0.85) -- (0,0) -- (1.6,0.5)
   (-0.6,1.1) -- (0.2,0.5) -- (0,0) -- (1.6,1.5) -- (0.2,0.5)
   (-0.25,1.55) -- (0.6,0.8)
   (0.3,1.9) to [out=175, in=75,looseness=1] (-0.9,1.6)
   (0,0) to [out=140, in=20,looseness=1] (-1,0.4)
   (-1,0.4) to [out=250, in=160,looseness=0.5] (0,0);
  \draw[line width=2pt, gray]
   (1.05,1.8) -- (1.6,1.5)
   (-0.6,1.3) -- (0.3,1.9);
  \filldraw
   (0,0) circle (1.5pt);
  \draw[line width=2.3pt]
   (2,0.6) circle (0.4 cm)
   (1.9,1.8) circle (0.4 cm)
   (0.7,2) circle (0.4 cm)
   (-1,1.2) circle (0.4 cm);

  \end{scope}

\end{tikzpicture}
    \caption{From left to right: A viable graph, an admissible forest and an inadmissible forest.}
    \label{fig:viable_graph}
\end{figure}
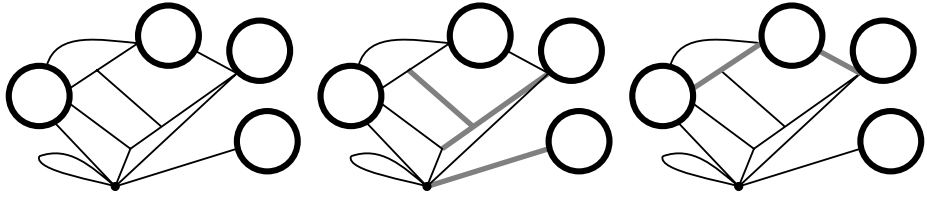
 
The action of~$\SAut_n^m$ on $K_{n+m}$ only affects markings, and takes admissible markings to admissible markings, so we can consider the action of~$\SAut_n^m$ on~$\Delta K_n^m$. Let
$$\Delta Q_n^m \defeq \Delta K_n^m/\SAut_n^m$$
be the orbit space.

\begin{proposition}\cite[Section~3]{bux09}\label{bcv_prop}
~$\SAut_n^m$ acts on~$\Delta K_n^m$ with finite stabilizers and finite quotient $\Delta Q_n^m$, and~$\Delta K_n^m$ is contractible.
\end{proposition}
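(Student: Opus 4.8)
This is exactly \cite[Section~3]{bux09}; since the argument is the scaffolding for everything that follows, I sketch how I would prove it. The two finiteness statements are soft. The stabilizer in $\Aut(F_{n+m})$ of a vertex $(\Gamma,p,\rho)$ of $K_{n+m}$ is the group of basepoint-preserving simplicial automorphisms of $\Gamma$ that commute with the marking up to homotopy; this is finite because $\Gamma$ has finitely many edges. The stabilizer in $\SAut_n^m$ is a subgroup of it, hence finite, and the stabilizer of a higher simplex lies inside the stabilizer of any of its vertices. For the quotient, recall that $K_{n+m}/\Aut(F_{n+m})$ is a finite complex: there are only finitely many homeomorphism types of basepointed rank $n+m$ graphs obeying the valence conditions, and $\Aut(F_{n+m})$ acts transitively on the finite (up to basepoint-preserving homotopy) set of markings of each. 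As $\Delta K_n^m$ is a subcomplex of $K_{n+m}$, its image downstairs is finite, so it suffices to see that each $\Aut(F_{n+m})$-orbit meeting $\Delta K_n^m$ contains only finitely many $\SAut_n^m$-orbits. This holds because, on a fixed viable graph $\Gamma$ with a fixed labelled choice of $m$ disjoint reduced cycles, the admissible markings realizing that data form a single $\SAut_n^m$-orbit: any two such markings $\rho_1,\rho_2$ satisfy $\rho_2\simeq\rho_1\circ\phi$ for a unique $\phi\in\Aut(F_{n+m})$, and since $\rho_1(x_i)$ and $\rho_2(x_i)$ reduce to the same distinguished cycle for $1\le i\le m$, the element $\phi(x_i)$ is conjugate to $x_i^{\pm1}$, so $\phi\in\SAut_n^m$; the finitely many possible labellings of the distinguished cycles account for the finitely many orbits.

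The substantive claim is contractibility. I would prove that the inclusion $\iota\colon\Delta K_n^m\hookrightarrow K_{n+m}$ is a homotopy equivalence; since $K_{n+m}$ is contractible \cite{hatcher98}, this finishes it. View $K_{n+m}$ as the geometric realization of the poset of marked graphs ordered by forest collapse, and $\Delta K_n^m$ as the realization of the subposet of viable marked graphs with admissible markings. The idea is that any marked graph can be blown up so that the $m$ distinguished loops $\rho(x_1),\dots,\rho(x_m)$ are realized as disjoint embedded cycles, producing a viable graph with admissible marking; so for every vertex $v=(\Gamma,p,\rho)$ of $K_{n+m}$ the subposet $\{w\in\Delta K_n^m : w\ge v\}$ (viable, admissibly-marked graphs that blow $v$ up) is nonempty. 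By Quillen's Theorem A applied to $\iota$ as a poset map, in the ``$\ge$'' variant, it then suffices to show that $\{w\in\Delta K_n^m : w\ge v\}$ is contractible for every $v$.

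Establishing contractibility of these fibers is the main obstacle and the technical heart of the proof. A fiber is, essentially, the poset of all admissible ways to separate the distinguished loops of $\rho$ inside blow-ups of $\Gamma$, and the governing combinatorial data is precisely that of admissible forests --- trees each meeting at most one distinguished cycle, in a connected edge path. The plan is to show this poset is contractible, for instance by a discrete Morse argument or by exhibiting it as an iterated join of contractible pieces, while carefully controlling how the distinguished cycles are allowed to interact with the trees of an admissible forest (this is exactly where the characterizing property of admissible forests recalled above is used). Granting the fiber contractibility, Quillen's Theorem A gives that $\iota$ is a homotopy equivalence, hence $\Delta K_n^m$ is contractible. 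The detailed fiber analysis is the content of \cite[Section~3]{bux09}, to which I refer for the verification.
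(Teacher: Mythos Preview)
The paper gives no proof of this proposition; it is simply cited from \cite[Section~3]{bux09}. Your treatment of the two finiteness claims is correct and standard.

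Your outline for contractibility, however, has a genuine gap. You propose applying Quillen's Theorem~A to the inclusion $\iota\colon\Delta K_n^m\hookrightarrow K_{n+m}$, which requires each fiber $\{w\in\Delta K_n^m:w\ge v\}$ to be contractible --- in particular nonempty --- for \emph{every} vertex $v$ of $K_{n+m}$. You assert nonemptiness on the grounds that ``any marked graph can be blown up so that the $m$ distinguished loops are realized as disjoint embedded cycles.'' This is false when $m\ge 2$. Take $v=(R_{n+m},p_0,\phi)$ with $\phi(x_1)=x_1x_2$ and $\phi(x_i)=x_i$ for $i\ge 2$. In any blow-up $(\Gamma,p,\rho)$ of $v$, the edges outside the collapsed forest are exactly the lifts $\tilde x_1,\dots,\tilde x_{n+m}$ of the petals; the cyclically reduced loop representing $\rho(x_1)\simeq x_1x_2$ necessarily traverses $\tilde x_2$, as does the reduced loop for $\rho(x_2)\simeq x_2$. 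Hence the first two distinguished cycles share the edge $\tilde x_2$ in every blow-up, the marking is never admissible, and the fiber over $v$ is empty. The subcomplex $\Delta K_n^m$ simply does not sit cofinally inside $K_{n+m}$, so its contractibility cannot be read off from that of $K_{n+m}$ via Theorem~A in the way you describe; the argument in \cite{bux09} must proceed by other means.
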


It is also clear that if an element of~$\SAut_n^m$ stabilizes a simplex then it fixes it pointwise, since the vertices of any simplex correspond to pairwise non-isomorphic graphs. The upshot of this that $\Delta Q_n^m$ and~$\SAut_n^m$ have the same rational homology; see for example Exercise~2 on page~174 in \cite{brown82}.

\noindent\textbf{Our space of interest:} There is a nice subcomplex of~$\Delta K_n^m$ that will prove useful for our purposes, namely the subcomplex $\nabla K_n^m$ spanned by marked basepointed graphs in~$\Delta K_n^m$ in which the basepoint~$p$ is not contained in a distinguished cycle. The action of~$\SAut_n^m$ on $\nabla K_n^m$ similarly features finite stabilizers and finite quotient
$$\nabla Q_n^m \defeq \nabla K_n^m/\SAut_n^m.$$
To keep straight which is which, note that the symbol~$\nabla$ is ``top-heavy'' compared to $\Delta$, indicating that the distinguished cycles cannot be down at the basepoint.

\noindent\textbf{Weighted degree:} It is difficult to analyze $\Delta Q_n^m$ and $\nabla Q_n^m$ directly, and so we will work with a certain filtration. For a vertex $(\Gamma,p,\rho)$ in~$\Delta K_n^m$, define the \emph{weighted valency} $val_w(v)$ of a vertex~$v$ to be the number of undistinguished half-edges at~$v$, plus half the number of distinguished half-edges. Define the \emph{weighted degree} $d_w(\Gamma)$ to be
$$d_w(\Gamma)\defeq 2n+m-val_w(p).$$
It is clear that $1\le val_w(p)\le 2n+m$, and so $0\le d_w(\Gamma)\le N$, where $N \defeq 2n+m-1$. As an example, the reader can verify that the weighted degree of the graph in Figure~\ref{fig:viable_graph} is~$10$. We will also make use of the notion of \emph{degree} from \cite{hatcher98}, which we define to be
$$d_0(\Gamma) \defeq 2n+2m-val(p).$$
If~$c$ denotes the number of distinguished cycles not containing~$p$, it is clear that $d_w=d_0-c$. The reader curious about the motivation for defining weighted degree this way should glance ahead to the paragraph after Definition~\ref{def:lollies}.

For $k\in\N_0$ let $\Delta K_{n,k}^m$ be the full subcomplex of~$\Delta K_n^m$ spanned by marked basepointed graphs with weighted degree less than or equal to~$k$. In particular for~$k\ge N$, $\Delta K_{n,k}^m=\Delta K_n^m$. Also let $\nabla K_{n,k}^m = \Delta K_{n,k}^m \cap \nabla K_n^m$. The sequence of spaces
$$\nabla K_{n,0}^m \subseteq \nabla K_{n,1}^m \subseteq \cdots$$
is a filtration of $\nabla K_n^m$, and not of the contractible complex~$\Delta K_n^m$, but these smaller complexes will prove to be the right ones to inspect for various reasons. Note that when~$m=0$, $\nabla K_{n,k}^0=\Delta K_{n,k}^0=K_{n,k}$, the filtration of~$K_n$ by degree used in \cite{hatcher98}.

As a bit of foreshadowing to Section~\ref{sec:hom_stab}, note that the undistinguished loop and/or the distinguished loop on a stick (or ``lollipop'') at the basepoint in Figure~\ref{fig:viable_graph} could be removed without changing the weighted degree. This property is precisely the motivation for filtrating~$\Delta K_n^m$ and $\nabla K_n^m$ by weighted degree.

\section{A height function}\label{sec:morse}

We now define a height function~$h$ on the vertices of~$\Delta K_n^m$. This height function is related to the one defined in \cite{zar_degree_thm} on the space~$K_n=\Delta K_n^0$. This will allow us to inspect the connectivity of~$\nabla K_{n,k}^m$ using discrete Morse theory; see \cite{bestvina97} for background on discrete Morse theory.

\begin{definition}[Features of graphs]\label{def:graph_defs}
 Let $(\Gamma,p,\rho)$ be a basepointed viable marked graph. For vertices $v,v'$ in~$\Gamma$, let the \emph{distance}~$d(v,v')$ be the number of edges in a minimal-length edge path from~$v$ to~$v'$. Also, for a subforest~$F$ of~$\Gamma$, define the \emph{level}~$D(F)$ of~$F$ to be the smallest~$i$ such that~$F$ has a vertex at distance~$i$ from~$p$. Let
$$\Lambda_i(\Gamma)\defeq \{v\in\Gamma\mid d(p,v)=i\}$$
be the~$i^{\text{th}}$ \emph{level} of~$\Gamma$, so for example $\Lambda_0(\Gamma)=\{p\}$. If~$v$ is a vertex that is in a distinguished cycle $C$, and $d(p,v)\le d(p,v')$ for any other vertex~$v'$ in~$C$, then we will say that~$v$ is a \emph{base vertex for~$C$}, and call $i_C\defeq d(p,v)$ the \emph{base height} of~$C$. If~$v$ is a base vertex for some~$C$, call~$v$ a base vertex. Note that the basepoint~$p$ is a base vertex if and only if it is distinguished, if and only if~$c=m-1$. In Figure~\ref{fig:base_vtcs} the distinguished cycle~$C$ is indicated by thick edges, the base vertices are the larger dots, and the basepoint is the largest dot at the bottom.
\end{definition}

\begin{figure}[htb]
    \centering
    \begin{tikzpicture}%base vertices
  \draw[line width=0.7pt]
   (-2,1.5) -- (0,0)    (-1,1.5) -- (0,0)   (2,1.5) -- (0,0)   (0,1.5) -- (2,3)   (0,2.5) -- (2,3)   (-2,3) -- (-1,2.5)   (-1,1.5) -- (0,2.5)
   (0,0) to [out=30, in=-40,looseness=1.8] (2,3);
  
  \draw[line width=1.8pt]
   (-2,3) -- (2,3) -- (2,1.5) -- (0,1.5) -- (0,2.5) -- (-1,2.5) -- (-1,1.5) -- (-2,1.5) -- (-2,3);

  \filldraw
  (0,0) circle (4pt)

  (-1,1.5) circle (2.7pt)
  (-2,1.5) circle (2.7pt)
  (2,1.5) circle (2.7pt)
  (2,3) circle (2.7pt)

  (-2,3) circle (1.2pt)
  (-1,2.5) circle (1.2pt)
  (0,2.5) circle (1.2pt)
  (0,1.5) circle (1.2pt);
\end{tikzpicture}
    \caption{Distinguished cycle $C$ with $i_C=1$.}
    \label{fig:base_vtcs}
\end{figure}
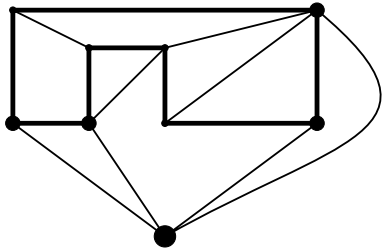

\noindent\textbf{Measurements contributing to the height function:} For each $i\ge 0$ let $m_i(\Gamma)$ be the number of base vertices in $\Lambda_i(\Gamma)$, define $n_i(\Gamma)\defeq -|\Lambda_i(\Gamma)|$ and let
$$d_i(\Gamma)\defeq \sum_{v\not\in\Lambda_i}(val(v)-2).$$
Note that $m_0=m-c$, $n_0$ is constant~$-1$ and $d_0=2n+2m-val(p)$ is the degree. In general~$d_i$ can be thought of as counting the number of vertices \emph{not} at level~$i$, with higher valent vertices ``counting for more.'' Now define
$$h_i(\Gamma)\defeq (m_i(\Gamma),n_i(\Gamma),d_i(\Gamma))\text{, and set }h(\Gamma)=(h_0(\Gamma),h_1(\Gamma),h_2(\Gamma),\dots)$$
with the lexicographic order. We remark that the height function used in \cite{zar_degree_thm} on the spine of Auter space was $(d_0,n_1,d_1,n_2,d_2,\dots)$, which encodes the same information as our~$h$ when~$m=0$. Extend~$h$ to the vertices of~$\Delta K_n^m$ via $h(\Gamma,p,\rho)=h(\Gamma)$. In general we will just write~$\Gamma$ to denote vertices of~$\Delta K_n^m$, with the basepoint and marking understood.

\noindent\textbf{How forests affect the measurements:} Note that for any admissible forest~$F$, blowing down~$F$ either increases or decreases $h_{D(F)}$. For example, if~$n_{D(F)}$ does not change, then~$d_{D(F)}$ must decrease. Also, blowing down~$F$ does not change any~$h_i$ for~$i<D(F)$, since this is clearly true for~$m_i$ and~$n_i$, and is easy to check for~$d_i$. In general, of all the terms changed by blowing down~$F$, there is one that is lexicographically first, which we will call the \emph{essential} term of~$F$. Similarly, any blow-up has an essential term. We remark that a blow-down at level~$i$ cannot decrease~$n_i$, and a blow-up at level~$i$ cannot decrease~$d_i$, though both blow-downs and blow-ups can either increase or decrease~$m_i$.

It is easy to see that $\nabla K_n^m$ is the sublevel set of~$\Delta K_n^m$ defined by the inequality
$$h \le (0,0,0,\dots).$$
Moreover, when~$m_0=0$ we have~$d_0=d_w+m$, so $\nabla K_{n,k}^m$ is the sublevel set of~$\Delta K_n^m$ defined by
$$h \le (0,-1,k+m+1,-1,0,\dots).$$
The upshot of this is that connectivity of $\nabla K_{n,k}^m$ can be determined by looking at descending links of vertices with respect to~$h$. For a vertex~$\Gamma$ in~$\Delta K_n^m$, the \emph{descending star}~$\dst(\Gamma)$ with respect to~$h$ is the set of simplices in the star of~$\Gamma$ whose other vertices all have strictly lower height than~$\Gamma$. The \emph{descending link}~$\dlk(\Gamma)$ consists of the faces of simplices in~$\dst(\Gamma)$ that do not themselves contain~$\Gamma$.

\noindent\textbf{The up-link and down-link:} There are two types of vertices in~$\dlk(\Gamma)$: those obtained from~$\Gamma$ by a descending blow-up, and those obtained by a descending blow-down. Here we say that a blow-up or blow-down is \emph{descending} if the resulting graph has a lower height than the starting graph. Call the subcomplex of~$\dlk(\Gamma)$ spanned by vertices of the first type the \emph{up-link}, and the subcomplex spanned by vertices of the second type the \emph{down-link}. Any vertex in the up-link is related to every vertex in the down-link by a blow-down, so~$\dlk(\Gamma)$ is the simplicial join of the up- and down-links. (This is exactly the kind of decomposition of~$\dlk(\Gamma)$ that occurs in \cite{zar_degree_thm}.) We remark that we only consider admissible blow-downs, and on the other hand observe that any blow-up of a viable graph is again viable. If a forest blow-down is descending we call the forest itself \emph{descending}, and similarly we refer to \emph{ascending} forests. As remarked above, any forest blow-down either increases or decreases $h$. Since adjacent vertices of~$\Delta K_n^m$ are related via forest blow-downs, this means that adjacent vertices have different heights, so~$h$ is really a height function, in the sense of~\cite{bestvina97}.

\medskip

It will be important to have a somewhat explicit description of which forests are descending.

\begin{lemma}[Interpreting the height function $h$]\label{which_forests_desc}
 Let~$F$ be an admissible forest in~$\Gamma$ with~$i \defeq D(F)$.
 \begin{enumerate}
  \item If $m_i(\Gamma/F)<m_i(\Gamma)$, then~$F$ is descending.
  \item If $m_i(\Gamma/F)>m_i(\Gamma)$, then~$F$ is ascending.
  \item If $m_i(\Gamma/F)=m_i(\Gamma)$ and~$F$ connects vertices in~$\Lambda_i$, then~$F$ is ascending.
  \item If $m_i(\Gamma/F)=m_i(\Gamma)$ and~$F$ does not connect vertices in~$\Lambda_i$, then~$F$ is descending.
 \end{enumerate}
\end{lemma}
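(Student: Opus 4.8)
The plan is to reduce the whole statement to tracking the single ``essential'' coordinate $h_i$, where $i=D(F)$. By the remarks preceding the lemma, blowing down $F$ leaves every $h_j$ with $j<i$ unchanged while strictly changing $h_i$; hence $F$ is descending exactly when $h_i(\Gamma/F)<h_i(\Gamma)$ for the lexicographic order on the triples $(m_i,n_i,d_i)$. In cases (1) and (2) the leading coordinate $m_i$ already changes, in the asserted direction, so the conclusion is immediate. From here on I assume $m_i(\Gamma/F)=m_i(\Gamma)$ and must read off the sign of the change of $h_i$ from its remaining two coordinates.

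The crux is a sharpening of the fact (quoted before the lemma) that a blow-down at level $i$ cannot decrease $n_i$: collapsing an admissible forest $F$ with $D(F)=i$ strictly increases $n_i$, i.e.\ strictly decreases $|\Lambda_i|$, if some component of $F$ contains two vertices of $\Lambda_i$, and otherwise leaves $n_i$ unchanged. I would establish this by a path-lifting argument. Writing $q\colon\Gamma\to\Gamma/F$ for the collapse, one checks that any geodesic in $\Gamma/F$ issuing from $p$ that enters a tree of $F$ must first reach a vertex at level $\ge D(F)=i$, hence has used at least $i$ edges by that point; combined with the fact that $q$ can only decrease distances, this shows that a vertex $v$ of $\Gamma$ at level $i$ has $q(v)$ again at level $i$, and that every vertex of $\Gamma/F$ at level $i$ is either the image of a single level-$i$ vertex of $\Gamma$ lying outside $F$, or the image $v_\tau$ of a tree $\tau$ of $F$ with $D(\tau)=i$. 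Comparing the two sides of $q$ then gives
$$|\Lambda_i(\Gamma)|-|\Lambda_i(\Gamma/F)|=\sum_{\tau\colon D(\tau)=i}\bigl(|\tau\cap\Lambda_i(\Gamma)|-1\bigr),$$
a sum of nonnegative terms over a nonempty index set, which is positive exactly when some such $\tau$ meets $\Lambda_i$ in at least two vertices --- that is, exactly when $F$ connects vertices in $\Lambda_i$.

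The remaining two cases now follow. In case (3), $n_i$ strictly increases while $m_i$ is unchanged, so $h_i$ strictly increases and $F$ is ascending. In case (4), $n_i$ is unchanged; since $h_i$ must change and its first two coordinates do not, $d_i$ must change, and it must decrease: this is the quoted fact that a blow-down at level $i$ with $n_i$ fixed decreases $d_i$, which one can re-derive from the invariance of $\sum_v(val(v)-2)=2(|E|-|V|)$ under forest collapse together with the observation that each tree $\tau$ with $D(\tau)=i$ now meets $\Lambda_i$ in exactly one vertex, whose valence strictly increases when $\tau$ is collapsed. Hence $h_i$ strictly decreases and $F$ is descending. I expect the only genuinely delicate point to be the path-lifting bookkeeping behind the displayed identity --- concretely, ruling out the possibility that a vertex of $\Gamma$ at level strictly greater than $i$ is dragged down to level exactly $i$ by the collapse. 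Everything else is a direct application of the properties of the height function $h$ recorded in the paragraphs immediately preceding the lemma.
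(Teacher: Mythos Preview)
Your proposal is correct and follows essentially the same approach as the paper's own proof: reduce to the essential coordinate $h_i$, then read off the sign of the change from $m_i$, then $n_i$, then $d_i$ in turn. The paper's version is far terser (four lines) because it simply cites the facts recorded in the paragraph immediately before the lemma and treats ``$F$ connects vertices in $\Lambda_i$ iff $n_i$ increases'' as self-evident, whereas you unpack this via the path-lifting identity and make the $d_i$ decrease explicit through the Euler-characteristic invariance---but the logical skeleton is identical.
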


\begin{proof}
 The essential term of~$F$ occurs in~$h_i$, so the first two claims are immediate. Suppose $m_i(\Gamma/F)=m_i(\Gamma)$. If~$F$ connects vertices in~$\Lambda_i$, then blowing down~$F$ increases~$n_i$ and so is ascending. If~$F$ does not connect vertices in~$\Lambda_i$, then blowing down~$F$ does not change~$n_i$, but decreases~$d_i$, so~$F$ is descending.
\end{proof}

The height function~$h$ is a bit cumbersome, but the idea of how it will be used is not too complicated. The goal is to prove the Generalized Degree Theorem, Theorem~\ref{thrm:sublev_conn}, that $\nabla K_{n,k}^m$ is $(k-1)$-connected, for $0 \le k< N$. Since~$\Delta K_n^m$ is contractible, it suffices by~\cite[Corollary~2.6]{bestvina97} to show that for any vertex~$\Gamma$ in $\Delta K_n^m \setminus \nabla K_{n,k}^m$, the descending link~$\dlk(\Gamma)$ of~$\Gamma$ in~$\Delta K_n^m$ is $(k-1)$-connected. We will do this in Section~\ref{sec:connectivity}. Before doing the technical connectivity calculations though, in Section~\ref{sec:hom_stab} we show how the Generalized Degree Theorem gives us homological stability results.

\section{Homological stability}\label{sec:hom_stab}
In the next section we will prove the Generalized Degree Theorem, namely that $\nabla K_{n,k}^m$ is $(k-1)$-connected for $0\le k<N$, where recall that $N\defeq 2n+m-1$. First, in this section, we show how this can be used to obtain homological stability results for families of groups. As in Section~\ref{sec:intro}, let~$G_n^m$ be any family of groups such that $P\SAut_n^m\le G_n^m\le\SAut_n^m$ for each~$n$ and~$m$, and such that the inclusion $\SAut_n^m\hookrightarrow \SAut_{n+1}^m$ restricts to an inclusion $G_n^m\hookrightarrow G_{n+1}^m$. For any~$0\le k< N$, the action of~$G_n^m$ on~$\Delta K_{n,k}^m$ has finite stabilizers and finite quotient $\nabla K_{n,k}^m/G_n^m$. Hence by the Generalized Degree Theorem, $\nabla K_{n,k}^m/G_n^m$ has the same rational homology as~$G_n^m$ in dimensions~$i$ with $i < k$. To be precise, we have the following

\begin{lemma}[From groups to orbit spaces]\label{gp_to_space_hlgy}
 For any $0\le k< N$, we have that $H_i(\nabla K_{n,k}^m/G_n^m;\Q)$ is isomorphic to $H_i(G_n^m;\Q)$ for $i<k$, and $H_k(\nabla K_{n,k}^m/G_n^m;\Q)$ surjects onto $H_k(G_n^m;\Q)$.\qed
\end{lemma}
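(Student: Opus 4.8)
The plan is to deduce the lemma from the Generalized Degree Theorem (Theorem~\ref{thrm:sublev_conn}) by comparing $G_n^m$ with the Borel construction on the complex $X\defeq\nabla K_{n,k}^m$, just as $\SAut_n^m$ was compared with $\Delta Q_n^m$ in Section~\ref{sec:auterspace}. Write $G\defeq G_n^m$. As recorded above, $G$ acts on $X$ with finite stabilizers and finite quotient, and it fixes pointwise any simplex it stabilizes (the vertices of a simplex are pairwise non-isomorphic graphs). The first step is to observe that, with $\Q$-coefficients, the projection from the homotopy quotient $EG\times_G X$ to the strict quotient $X/G$ is a homology isomorphism: its point-inverse over $[x]$ is $BG_x$, which is $\Q$-acyclic because $G_x$ is finite, so a Leray-spectral-sequence (Vietoris--Begle type) argument gives $H_i(X/G;\Q)\cong H_i(EG\times_G X;\Q)$ for all $i$. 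This is exactly the fact already invoked for $\Delta Q_n^m$; see \cite[Exercise~2, p.~174]{brown82}.

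Next I would feed in the connectivity estimate. Since $0\le k<N$, Theorem~\ref{thrm:sublev_conn} says $X$ is $(k-1)$-connected, so $H_0(X;\Q)=\Q$ and $H_q(X;\Q)=0$ for $1\le q\le k-1$. Running the Serre spectral sequence of the other projection, the fibration $X\to EG\times_G X\to BG$, gives
$$E^2_{p,q}=H_p\bigl(BG;\,H_q(X;\Q)\bigr)\ \Longrightarrow\ H_{p+q}(EG\times_G X;\Q).$$
In the relevant range the local coefficients play no role: $E^2_{p,0}=H_p(G;\Q)$ because $H_0(X;\Q)=\Q$ with trivial $G$-action, while $E^2_{p,q}=0$ for $1\le q\le k-1$. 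Hence for total degree $i<k$ the only possibly nonzero term is $E^2_{i,0}$, and it supports no nonzero differential (every differential into or out of it has source or target either in the vanishing band $1\le q\le k-1$ or in a negative row), so $E^\infty_{i,0}=E^2_{i,0}$ and $H_i(EG\times_G X;\Q)\cong H_i(G;\Q)$. In total degree $k$ the term $E^\infty_{k,0}$ is the top quotient of the filtration on $H_k(EG\times_G X;\Q)$ and, by the same count, equals $E^2_{k,0}=H_k(G;\Q)$; therefore $H_k(EG\times_G X;\Q)$ surjects onto $H_k(G;\Q)$. Composing with the isomorphism of the first step yields the stated isomorphism for $i<k$ and the surjection for $i=k$.

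I do not expect a genuine obstacle: all the content sits in the input connectivity of $X$ supplied by Theorem~\ref{thrm:sublev_conn}, and the rest is the standard ``two projections out of a Borel construction'' argument. The one point deserving care — and the reason one gets only a surjection, not an isomorphism, in degree $k$ — is that at total degree $k$ the spectral sequence also has the corner term $E^\infty_{0,k}$, a quotient of the coinvariants $H_k(X;\Q)_G$, which the hypothesis does not control; so one should claim exactly $H_k(EG\times_G X;\Q)\twoheadrightarrow E^\infty_{k,0}=H_k(G;\Q)$ and no more. A variant write-up avoids the Borel construction by using the equivariant-homology spectral sequence $E^1_{p,q}=\bigoplus_\sigma H_q(G_\sigma;\Q)\Rightarrow H^G_{p+q}(X;\Q)$: over $\Q$ it collapses onto the $q=0$ row, identifying $H^G_*(X;\Q)$ with $H_*(X/G;\Q)$, after which one compares with $H_*(G;\Q)$ via the $G$-equivariant map $X\to\mathrm{pt}$ and the fact that $X$ is $(k-1)$-connected.
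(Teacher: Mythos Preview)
Your argument is correct and is exactly the standard ``two projections out of the Borel construction'' computation that the paper is tacitly invoking: the lemma is marked \qed\ in the paper and is preceded only by the sentence citing the Generalized Degree Theorem and the finite-stabilizer/finite-quotient facts (with the reference to \cite[Exercise~2, p.~174]{brown82} already made in Section~\ref{sec:auterspace}). Your write-up simply spells out those details, including the reason one gets only a surjection in degree $k$, so there is nothing to compare or correct.
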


To get homological stability in~$n$ for~$G_n^m$ we can now look for homological stability in~$n$ for $\nabla K_{n,k}^m/G_n^m$. We will do this in a similar way as done in the classical $m=0$ case in \cite[Section~5]{hatcher98}. The vertices of $\nabla K_n^m/P\SAut_n^m$ are the homeomorphism types of basepointed graphs with~$m$ distinguished oriented cycles, disjoint and distinguishable from each other and from the basepoint. In $\nabla K_{n,k}^m/\SAut_n^m$ the cycles become non-oriented and indistinguishable from each other, and in general $\nabla K_{n,k}^m/G_n^m$ interpolates between these two extremes. Exactly as in \cite{hatcher98}, we have a map
$$\nu \colon \nabla K_{n,k}^m/G_n^m \hookrightarrow \nabla K_{n+1,k}^m/G_{n+1}^m$$
induced by sending a graph~$\Gamma$ to~$\Gamma\vee S^1$, that is the graph with an extra (undistinguished) loop wedged to its basepoint.

To get stability in~$n$, we want to be able to ``detect'' loops and theta subgraphs at the basepoint. If~$\Gamma$ has a loop at the basepoint~$p$ then $\Gamma$ is in the image of~$\nu$, which is why want to be able to detect loops. We will see in Proposition~\ref{orbit_spaces_n_stable} why theta subgraphs at the basepoint are also useful.

First we set up the situation for stability in~$m$. Instead of loops and theta subgraphs we will use certain subgraphs defined as follows.

\begin{definition}[Lollipops and double lollipops]\label{def:lollies}
 A \emph{lollipop} in $\Gamma$ is a subgraph $\ell$ consisting of an undistinguished non-loop edge~$\epsilon$ (the \emph{stick}) and a distinguished loop $\delta$ sharing a vertex $v\neq p$, such that $\epsilon$ and $\delta$ are the only edges incident to $v$. If $w\neq v$ is the other vertex of $\epsilon$, we define a \emph{double lollipop} to be the result of wedging $\ell$ at $w$ to any point of another lollipop $\ell'$.
\end{definition}

Define a map
$$\mu \colon \nabla K_{n,k}^m/\SAut_n^m \hookrightarrow \nabla K_{n,k}^{m+1}/\SAut_n^{m+1}$$
by sending $\Gamma$ to $\Gamma\vee \ell$, where $\ell$ is a lollipop wedged to the basepoint. Unlike attaching an undistinguished loop, attaching a lollipop in this way changes the degree, but it does not change the weighted degree, so this is still fine. (Indeed this was precisely the impetus for defining weighted degree as we did.) We now describe how to detect the presence of these various subgraphs at the basepoint, as in \cite[Lemma~5.2]{hatcher98}. Following that, we will see why this gives us stability.

\begin{lemma}[Detecting features at the basepoint]\label{detect_things_at_p}
 Let $(\Gamma,p)$ be a graph with rank~$n+m$, weighted degree~$d_w$, basepoint~$p$, and~$m$ pairwise disjoint distinguished cycles, disjoint from~$p$. The following hold:
 \begin{enumerate}
  \item If $n>2d_w+m$ then~$\Gamma$ has a loop at the basepoint.
  \item If $n>(3d_w+m)/2$ then~$\Gamma$ has either a loop at the basepoint or a theta graph wedge summand.
  \item If $m>2d_w$ then~$\Gamma$ has a lollipop at the basepoint.
  \item If $m>3d_w/2$ then~$\Gamma$ has a lollipop or a double lollipop at the basepoint.
 \end{enumerate}
\end{lemma}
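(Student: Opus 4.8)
The plan is to establish all four claims by a counting argument comparing the valence data of $\Gamma$ against its rank and weighted degree, following the template of \cite[Lemma~5.2]{hatcher98}. First I would fix notation: write $V$ for the set of vertices, and split $V=\{p\}\sqcup V_{\mathrm d}\sqcup V_{\mathrm u}$, where $V_{\mathrm d}$ is the set of vertices lying on some distinguished cycle and $V_{\mathrm u}$ the remaining ones. Since the $m$ distinguished cycles are disjoint, reduced, and miss $p$, each contributes at least one vertex to $V_{\mathrm d}$; a vertex of $V_{\mathrm d}$ that carries only its two distinguished half-edges is precisely the ``free'' part of a cycle, and a distinguished loop $\delta$ attached to a single extra vertex is what allows a lollipop to form. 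The Euler-characteristic/rank relation gives $\#\{\text{edges}\}-\#\{\text{vertices}\}=n+m-1$, and summing valences gives $\sum_{v}\operatorname{val}(v)=2\#\{\text{edges}\}$. Combined with $\operatorname{val}(p)=2n+2m-d_0$ and $d_0=d_w+c$ (here $c=m$ since all cycles miss $p$), these relations let me bound the number of high-valence vertices.

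Next I would run the classical argument for (1) and (2) restricted to the \emph{undistinguished} part. Removing all $m$ distinguished cycles (or rather, the edges and interior vertices of each cycle) from $\Gamma$, and also any lollipop sticks, leaves a graph in which the basepoint has some undistinguished valence $v_{\mathrm u}(p)\le \operatorname{val}_w(p)$. The point is that a loop at $p$ exists unless every undistinguished edge at $p$ leads to a vertex of valence $\ge 3$; a theta summand at $p$ fails only under a similar but slightly weaker valence obstruction. One then counts: each ``bad'' undistinguished edge at $p$ forces a contribution to $\sum_{v\ne p}(\operatorname{val}(v)-2)$, which is bounded in terms of $d_w$ and $m$ via the rank relation above. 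Pushing the inequalities through yields the thresholds $n>2d_w+m$ for (1) and $n>(3d_w+m)/2$ for (2). Here the factor-of-$3/2$ improvement comes, exactly as in \cite{hatcher98}, from the observation that a theta graph ``uses up'' two ranks while only costing one trivalent vertex, so a bad configuration obstructing both a loop and a theta summand is twice as expensive.

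For (3) and (4) I would argue dually, replacing ``rank $n$ at the basepoint'' with ``number $m$ of distinguished cycles.'' A distinguished cycle $C_j$ is a lollipop loop at the basepoint-stick level precisely when it is a single loop $\delta$ attached to a vertex $v$ that has exactly one further (undistinguished, non-loop) edge $\epsilon$ leading toward $p$, with $w$ the far endpoint of $\epsilon$; if $C_j$ is not of this form, it either has length $\ge 2$ (so contributes an extra vertex whose valence excess feeds into the count) or the junction vertex $v$ has valence $\ge 4$ (again an excess contribution), or $\epsilon$ is absent and the lollipop is directly at $p$ (handled as a loop). The weighted degree enters because a distinguished half-edge only counts $1/2$ toward $\operatorname{val}_w(p)$, which is precisely what makes attaching a lollipop weight-degree-neutral and what makes the $m$-side count mirror the $n$-side count with $d_w$ in place of $d_w$ and $m$-dependence dropping out (the cycles are self-contained). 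The thresholds $m>2d_w$ and $m>3d_w/2$ then drop out of the same inequality manipulation, the $3/2$ again reflecting that a double lollipop packages two distinguished cycles at the cost of roughly one extra vertex.

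The main obstacle I anticipate is \textbf{bookkeeping the distinguished structure correctly} in the rank/valence count: one must be careful that collapsing or ignoring a distinguished cycle changes the rank by exactly $1$ and the vertex count by (length of cycle), and that the definition of viable/admissible forbids pathological overlaps, so that the naive ``each distinguished cycle costs one rank and lives on $\ge 1$ vertex'' accounting is actually valid. A secondary subtlety is the precise combinatorial characterization of when a lollipop versus a double lollipop must appear — i.e.\ ensuring that ruling out lollipops forces enough valence excess that ruling out double lollipops too becomes impossible under $m>3d_w/2$ — which requires the same care with the ``weight two'' phenomenon that \cite{hatcher98} handles for theta graphs. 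Once the valence inequalities are set up symmetrically for the $n$-side and the $m$-side, the four estimates are four instances of the same pigeonhole computation.
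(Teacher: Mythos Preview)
Your plan for parts (1) and (2) is over-engineered and contains a misstep. Every non-basepoint vertex already has valence $\ge 3$ by hypothesis, so your proposed obstruction ``a loop at $p$ exists unless every undistinguished edge at $p$ leads to a vertex of valence $\ge 3$'' is vacuous. Moreover, excising the distinguished cycles can disconnect $\Gamma$, so the ``restricted to the undistinguished part'' reduction is not safe as stated. The paper's route is much simpler: since $p$ lies on no distinguished cycle, the ordinary degree is $d_0=d_w+m$, and then \cite[Lemma~5.2]{hatcher98} applied to the rank-$(n+m)$ graph $\Gamma$ itself (no surgery needed) gives a loop when $n+m>2d_0=2d_w+2m$ and a loop or theta when $2(n+m)>3d_0=3d_w+3m$, which are exactly the stated inequalities.

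For parts (3) and (4) there is a genuine gap. You assert that ``the thresholds drop out of the same inequality manipulation,'' but no such pigeonhole count is exhibited, and I do not see one that works directly: the no-lollipop hypothesis constrains the \emph{local} structure near each distinguished cycle, and converting that into a global bound on $val(p)$ (equivalently on $d_w$) is exactly the hard step. The paper does \emph{not} argue by a valence count here. Instead it inducts on $n$: for the base case $n=0$ one collapses all undistinguished (necessarily separating) edges to obtain a cactus graph and reads off $m\le 2d_w$ (resp.\ $m\le 3d_w/2$) from the combinatorics of loops at $p$ in the cactus. For $n>0$ one deletes a non-separating undistinguished edge $\epsilon$, records the drop $a\in\{0,1,2\}$ in weighted degree and the number $b$ (resp.\ $b,c$) of lollipops (resp.\ lollipops and double lollipops) that appear, strips those off, and applies the inductive hypothesis; the proof then reduces to the elementary checks $2a\ge b$ and $3a\ge 2b+4c$ by cases on whether $p$ is an endpoint of $\epsilon$. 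None of this structure is visible in your outline, and the ``dual'' framing, while suggestive, does not by itself produce the inequalities.
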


\begin{proof}
 Since~$p$ is not contained in a distinguished cycle, we have that the degree~$d_0$ is $d_0=d_w+m$. The first two parts of the lemma then follow from \cite[Lemma~5.2]{hatcher98}. Next suppose that there are no lollipops at~$p$, and we want to show that $m\le 2d_w+1$. We will induct on~$n$. If $n=0$ then every undistinguished edge in~$\Gamma$ is a separating edge. Let~$\Gamma'$ be the graph obtained by blowing down every undistinguished edge. Now~$\Gamma'$ is a \emph{cactus graph} as in \cite{collins89}, i.e., every edge is contained in a unique reduced cycle. Note that~$\Gamma'$ is no longer in $\Delta K_0^m$, since the distinguished cycles are not disjoint, but~$\Gamma'$ has the same weighted degree~$d_w$ as~$\Gamma$. Let~$b'$ be the number of cycles in~$\Gamma'$ at~$p$ and $c'=m-b'$ the number of cycles not at~$p$. Since~$\Gamma$ had no lollipops (or loops) at~$p$,~$\Gamma'$ has no loops at~$p$. This tells us that $b'\le c'$, and since $m=b'+c'$ we see that $m\le 2c'$. It is also clear that in~$\Gamma'$, $c'=m-val(p)/2=d_w$, so indeed $m\le 2d_w$. This finishes the base case, and we also note that if additionally~$\Gamma$ has no double lollipops then $b'\le c'/2$, so $m\le 3c'/2=3d_w/2$.

 Now assume $n>0$. Then there exists a undistinguished edge~$\epsilon$ that is not a separating edge. Let~$\Gamma_1$ be the graph obtained from~$\Gamma$ by removing~$\epsilon$, and then if any bivalent vertices~$v\neq p$ arise (or univalent vertices~$v$), blowing down one of the edges containing~$v$. Then~$\Gamma_1$ is a connected graph with undistinguished rank~$n-1$ and~$m$ distinguished cycles. Let $a\in\{0,1,2\}$ be such that the weighted degree $d_w(\Gamma_1)$ of~$\Gamma_1$ is $d_w-a$. In particular $a=0$ if and only if~$\epsilon$ is a loop at~$p$, and $a=1$ if and only if~$p$ is an endpoint of~$\epsilon$ and~$\epsilon$ is not a loop. The graph~$\Gamma_1$ has at most two lollipops at the basepoint, say there are~$b$ of them, so $b\in\{0,1,2\}$. Let~$\Gamma_2$ be the graph obtained by removing all lollipops at~$p$ in~$\Gamma_1$. Then the weighted degree $d_w(\Gamma_2)$ of~$\Gamma_2$ is the same as~$\Gamma_1$, the undistinguished rank is~$n-1$, and there are~$m-b$ distinguished cycles. By induction, $m-b\le 2(d_w-a)$, so $m\le 2d_w-(2a-b)$. It now suffices to show that $2a\ge b$. Clearly if $a=0$ then $b=0$, so suppose $a>0$. Then the only case to check is when $b=2$. But then~$p$ cannot be an endpoint of~$\epsilon$, so $a=2$ and the result follows. We remark that the stronger statement $a\ge b$ even holds.
 
 Lastly suppose that~$\Gamma$ has no lollipops or double lollipops at~$p$. Let $b\in\{0,1,2\}$ be the number of lollipops in~$\Gamma_1$ and $c\in\{0,1,2\}$ the number of double lollipops in~$\Gamma_1$, so $b+c\in\{0,1,2\}$. Let~$\Gamma_3$ be the graph obtained by removing all lollipops and double lollipops at~$p$ in~$\Gamma_1$. Let $a\in\{0,1,2,3,4\}$ be such that~$\Gamma_3$ has weighted degree $d_w-a$. Again, $a=0$ if and only if~$\epsilon$ is a loop at~$p$. Also, if~$\epsilon$ is not a loop but~$p$ is an endpoint of~$\epsilon$ then $a=1+c$, and otherwise $a=2+c$. See Figure~\ref{fig:degree_and_lollipops} for some examples. By the induction hypothesis $m-(b+2c)\le 3(d_w-a)/2$, so $m\le 3d_w/2-(3a/2-(b+2c))$. It now suffices to show that $3a\ge 2b+4c$. If $a=0$ then $b=c=0$, so suppose $a>0$. If~$p$ is an endpoint of~$\epsilon$ then $b+c\le1$ and $a=1+c$, so $2b+4c\le 2+2c=2a<3a$. Now suppose~$p$ is not an endpoint of~$\epsilon$, so $b+c\le 2$ and $c=a-2$. Then $2b+4c\le 4+2c=2a<3a$ and we are done. Again, we find that a stronger statement holds, namely $a\ge b+2c$.
\end{proof}

\begin{figure}[htb]
    \centering
    \begin{tikzpicture}%degree and lollipops
  
  \filldraw[lightgray]
   (0,0) to [out=90, in=10, looseness=1] (-1.3,2.1) to [out=190, in=135, looseness=1] (0,0);
  \draw
   (0,0) to [out=90, in=10, looseness=1] (-1.3,2.1) to [out=190, in=135, looseness=1] (0,0);

  \filldraw
   (0,0) circle (3.5pt)
   (0.7,0.5) circle (1.8pt)
   (-0.5,1.8) circle (1.8pt);
  \draw[line width=2.5pt]
   (1.5,1.1) circle (0.3cm);
  \draw[line width=0.7]
   (0,0) -- (1.3,0.9)   (0.7,0.5) to [out=100, in=-20,looseness=1] (-0.5,1.8);

  \node at (-0.8,1.4) {$\Gamma'$};
  \node at (0.3,1.55) {$\epsilon$};
  \node at (0,-0.5) {$a=2$, $b=1$, $c=0$};

\begin{scope}[xshift=4cm]
  \filldraw[lightgray]
   (0,0) to [out=90, in=10, looseness=1] (-1.3,2.1) to [out=190, in=135, looseness=1] (0,0);
  \draw
   (0,0) to [out=90, in=10, looseness=1] (-1.3,2.1) to [out=190, in=135, looseness=1] (0,0);

  \filldraw
   (0,0) circle (3.5pt)
   (0.7,0.5) circle (1.8pt)
   (-0.5,1.8) circle (1.8pt);
  \draw[line width=2.5pt]
   (1.5,2.4) circle (0.3cm)
   (1.5,1.1) circle (0.3cm);
  \draw[line width=0.7]
   (1.5,1.4) -- (1.5,2.1)
   (0,0) -- (1.3,0.9)   (0.7,0.5) to [out=100, in=-20,looseness=1] (-0.5,1.8);

  \node at (-0.8,1.4) {$\Gamma'$};
  \node at (0.3,1.55) {$\epsilon$};
  \node at (0,-0.5) {$a=3$, $b=0$, $c=1$};
\end{scope}

\begin{scope}[yshift=-4cm]
  \filldraw[lightgray]
   (0,0) to [out=90, in=10, looseness=1] (-1.3,2.1) to [out=190, in=135, looseness=1] (0,0);
  \draw
   (0,0) to [out=90, in=10, looseness=1] (-1.3,2.1) to [out=190, in=135, looseness=1] (0,0);

  \filldraw
   (0,0) circle (3.5pt)
   (1.5,1.8) circle (1.8pt);
  \draw[line width=2.5pt]
   (1.5,2.4) circle (0.3cm)
   (1.5,1.1) circle (0.3cm);
  \draw[line width=0.7]
   (1.5,1.4) -- (1.5,2.1)
   (0,0) -- (1.3,0.9)   (0,0) to [out=80, in=210,looseness=1] (1.5,1.8);

  \node at (-0.8,1.4) {$\Gamma'$};
  \node at (0.6,1.5) {$\epsilon$};
  \node at (0,-0.5) {$a=2$, $b=0$, $c=1$};
\end{scope}

\begin{scope}[yshift=-4cm, xshift=4cm]
  \filldraw[lightgray]
   (0,0) to [out=90, in=10, looseness=1] (-1.3,2.1) to [out=190, in=135, looseness=1] (0,0);
  \draw
   (0,0) to [out=90, in=10, looseness=1] (-1.3,2.1) to [out=190, in=135, looseness=1] (0,0);

  \filldraw
   (0,0) circle (3.5pt)
   (0.7,0.5) circle (1.8pt)
   (0.3,1.9) circle (1.8pt);
  \draw[line width=2.5pt]
   (0.3,2.4) circle (0.3cm)
   (0.3,1.3) circle (0.3cm)
   (1.5,2.4) circle (0.3cm)
   (1.5,1.1) circle (0.3cm);
  \draw[line width=0.7]
   (1.5,1.4) -- (1.5,2.1)
   (0,0) -- (1.3,0.9)
   (0,0) -- (0.3,1)   (0.3,1.6) -- (0.3,2.1)
   (0.7,0.5) to [out=60, in=0, looseness=1.2] (0.3,1.9);

  \node at (-0.8,1.4) {$\Gamma'$};
  \node at (0.95,1.7) {$\epsilon$};
  \node at (0,-0.5) {$a=4$, $b=0$, $c=2$};
\end{scope}

\end{tikzpicture}
    \caption{}
    \label{fig:degree_and_lollipops}
\end{figure}

\begin{remark}\label{better_bounds}
 In the last two paragraphs of the proof, it is interesting that the induction would have run even with sharper bounds. In fact, whatever the best possible bound is for the $n=0$ case automatically extends to all cases, as long as the slope is not less than~$1$. In particular, we can detect ``triple lollipops,'' ``quadruple lollipops,'' as so forth, with increasingly better bounds. Ultimately, we find that whenever $m>d_w$, there is always some non-trivial wedge summand that is an iterated wedge of lollipops. However, since we currently do not have a way to make use of this fact to get better bounds for homological stability, we will content ourselves with just detecting lollipops and double lollipops.
\end{remark}

\begin{proposition}[Stability in $n$]\label{orbit_spaces_n_stable}
 The map
$$\nu \colon \nabla K_{n,k}^m/G_n^m \hookrightarrow \nabla K_{n+1,k}^m/G_{n+1}^m$$
is a homeomorphism for $2k+m<n+1$ and a homotopy equivalence for $(3k+m)/2<n+1$.
\end{proposition}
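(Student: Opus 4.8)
The plan is to follow the strategy of \cite[Section~5]{hatcher98}, with \emph{degree} everywhere replaced by \emph{weighted degree} and with an added check that the relevant moves are compatible with the distinguished cycles. First I would record that $\nu$ is an injective simplicial map whose image $Y\defeq\nu(\nabla K_{n,k}^m/G_n^m)$ is the full subcomplex spanned by the classes of graphs that possess an undistinguished loop at the basepoint: the inverse move ``detach a loop at $p$'' is well defined on orbit spaces because $P\SAut_{n+1}^m\le G_{n+1}^m$ acts arbitrarily on the undistinguished generators, so the class of the resulting graph does not depend on which loop at $p$ is detached. Since a loop is never part of a forest, a blow-down of a graph of the form $\Gamma'\vee S^1$ is again of that form, so $Y$ is closed under passing to faces, every simplex all of whose vertices have a loop at $p$ lies in $Y$, and $\nu$ is a simplicial isomorphism onto $Y$. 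It then remains to analyze the inclusion $Y\hookrightarrow \nabla K_{n+1,k}^m/G_{n+1}^m$.

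For the homeomorphism statement, assume $2k+m<n+1$. Every vertex of $\nabla K_{n+1,k}^m$ is a graph of undistinguished rank $n+1$ and weighted degree $d_w\le k$, and $n+1>2k+m\ge 2d_w+m$, so Lemma~\ref{detect_things_at_p}(1) furnishes a loop at the basepoint. Hence $Y$ is all of $\nabla K_{n+1,k}^m/G_{n+1}^m$ and $\nu$ is a simplicial isomorphism, in particular a homeomorphism.

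For the homotopy equivalence statement, assume $(3k+m)/2<n+1$. Then Lemma~\ref{detect_things_at_p}(2) shows that each vertex of $X\defeq\nabla K_{n+1,k}^m/G_{n+1}^m$ either has a loop at $p$ (hence lies in $Y$) or has a theta-graph wedge summand $\Theta$ at $p$. The observation that makes the argument run is that such a $\Theta$ is automatically \emph{undistinguished}: each of its three embedded cycles runs through $p$, while the distinguished cycles avoid $p$. Collapsing one of its three edges is therefore an admissible forest collapse, and it turns $\Gamma'\vee_p\Theta$ into $\Gamma'\vee_p(S^1\vee S^1)$, a graph of the same rank, with a loop at $p$, and with weighted degree one smaller; in particular it still lies in $X$, indeed in $Y$. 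This is precisely the move used in \cite{hatcher98}: one builds an order-preserving poset map $r\colon X\to X$ with $r(\Gamma)\le\Gamma$ for all $\Gamma$ (so that $|r|\simeq\mathrm{id}_{|X|}$), with $r|_Y=\mathrm{id}_Y$, and with $r(X)\subseteq Y$, by replacing a canonically chosen theta summand at $p$ of a loop-free graph by a wedge of two loops; it follows formally that $Y\hookrightarrow X$ is a homotopy equivalence, and composing with $\nu\colon\nabla K_{n,k}^m/G_n^m\xrightarrow{\ \sim\ }Y$ gives the claim.

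The step I expect to be the main obstacle is the construction of the retraction $r$. One must select the theta summand to collapse in a canonical and consistent way — there may be several, and in the quotient they may be indistinguishable — verify that the resulting map respects the blow-up/blow-down order, and check that at each stage one stays inside the weighted-degree-$\le k$ subcomplex and inside the quotient by $G_{n+1}^m$, which (unlike $\SAut_{n+1}^m$) need not contain automorphisms permuting or reorienting the distinguished cycles. The weighted-degree bookkeeping is exactly what legitimizes the move: collapsing a theta at $p$ lowers $d_w$ rather than raising it, which is why weighted degree and not degree is the right filtration here; granting this, the remaining combinatorics is as in \cite[Section~5]{hatcher98}.
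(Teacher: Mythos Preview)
Your homeomorphism argument is correct and matches the paper's. The gap is in the homotopy-equivalence step: the global poset retraction $r$ you propose is \emph{not} order-preserving, and this is not a mere bookkeeping issue but an actual obstruction. Take $\Gamma_2=\Theta\vee\Gamma_2'$ with $\Gamma_2'$ containing a ``fat bigon'' at $p$ (two edges from $p$ to a vertex $v$ of valency $\ge 4$), so $\Gamma_2'$ has no loop or theta summand. Let $\Gamma_1=\Gamma_2/f$ where $f$ is one of the bigon edges; then $\Gamma_1=\Theta\vee(\Gamma_2'/f)$ has a loop at $p$, so $r(\Gamma_1)=\Gamma_1$ still contains a theta summand. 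But $r(\Gamma_2)=(S^1\vee S^1)\vee\Gamma_2'$ has two loops at $p$ and no theta summand, and a forest collapse can never destroy a loop at $p$ nor create a theta wedge summand, so $r(\Gamma_1)\not\le r(\Gamma_2)$. Thus no such $r$ exists, and the claim that ``this is precisely the move used in \cite{hatcher98}'' is a misreading of that paper.

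What the paper (following \cite{hatcher98}) actually does is local rather than global. For a vertex $\Gamma\notin Y$, write $\Gamma=\Theta\vee\Gamma'$ where $\Theta$ is the wedge of all $r\ge 1$ theta summands at $p$. The open star of $\Gamma$ in the quotient factors as the product of the open star of $\Theta$ in $\nabla K_{2r,r}^0/G_{2r}^0$ with the open star of $\Gamma'$. The first factor is a single simplex, because in the quotient all non-loop edges of $\Theta$ are equivalent, and every other vertex of that simplex has strictly smaller weighted degree (so lies in $Y$). Hence collapsing one theta edge deformation-retracts the star of $\Gamma$ into $Y$, and doing this over all such $\Gamma$ gives the desired retraction. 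The product decomposition of the star, and the fact that the theta factor is a \emph{single} simplex in the quotient, are exactly what replaces your missing order-preservation.
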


\begin{proof}
 The proof is very similar to the proof of Proposition~5.4 in \cite{hatcher98}. If $2k+m<n+1$ then every~$\Gamma$ in $\nabla K_{n+1,k}^m/G_{n+1}^m$ has a loop at~$p$, so~$\nu$ is a homeomorphism. Now suppose $(3k+m)/2<n+1$, and let~$\Gamma$ be a vertex not in the image of~$\nu$. Then~$\Gamma$ has no loops at~$p$ but does have at least one theta graph wedge summand. Let~$\Theta$ be the subgraph of~$\Gamma$ consisting of all such theta graphs at~$p$, say there are~$r\ge 1$ of them. Then $\Gamma=\Theta\vee\Gamma'$, for some~$\Gamma'$ with rank $n+m+1-2r$. Now, the open star of~$\Gamma$ in $\nabla K_{n+1,k}^m/G_{n+1}^m$ is the product of open stars of~$\Theta$ in $\nabla K_{2r,r}^0/G_{2r}^0$ and~$\Gamma'$ in $\nabla K_{n+1-2r,k-r}^m/G_{n+1-2r}^m$. The former consists of a single simplex, since all non-loop edges in~$\Theta$ are equivalent under automorphisms of~$\Theta$; moreover, every other vertex of this star has lower weighted degree since blowing down any edge reduces~$d_w$ by~$1$. So, collapsing any non-loop edge of~$\Theta$ gives a deformation retraction of the star of~$\Gamma$ into the image of~$\nu$.
\end{proof}

As a remark, in \cite{hatcher98b} some bounds are given to detect wedge summands of higher degree, and the possibility of collapsing these in a similar way to the theta wedge summands is examined. In the present situation though, this collapse could cause~$p$ to become distinguished, which is a problem. Hence we cannot immediately improve the bound to $(5k+m)/4<n+1$, as was done for the~$m=0$ case in \cite{hatcher98b}. It seems likely that we could nonetheless improve this bound by directly inspecting examples with low (weighted) degree, in the spirit of \cite{hatcher98b}, but we leave this for future work.

\begin{proposition}[Stability in $m$]\label{orbit_spaces_m_stable}
 Let $\nabla Q_{n,k}^m \defeq \nabla K_{n,k}^m/\SAut_n^m$. The map
$$\mu \colon \nabla Q_{n,k}^m \hookrightarrow \nabla Q_{n,k}^{m+1}$$
is a homeomorphism for $2k<m+1$ and a homotopy equivalence for $3k/2<m+1$.
\end{proposition}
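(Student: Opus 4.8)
The plan is to run the argument of Proposition~\ref{orbit_spaces_n_stable} essentially verbatim, with lollipops playing the role of loops at the basepoint, double lollipops the role of theta wedge summands, and parts~(3) and~(4) of Lemma~\ref{detect_things_at_p} replacing parts~(1) and~(2). Before anything else I would record that $\mu$ is a well-defined, injective, simplicial map and that its image is exactly the full subcomplex of $\nabla Q_{n,k}^{m+1}$ spanned by the graphs that contain a lollipop wedge summand at $p$: wedging on a lollipop at $p$ raises the rank and the degree by one but leaves the weighted degree unchanged, so $\mu$ maps $\nabla Q_{n,k}^m$ into $\nabla Q_{n,k}^{m+1}$; in the orbit space, deleting a lollipop at $p$ undoes wedging one on; and for any graph with a lollipop at $p$, deleting that lollipop drops the rank and the number of distinguished cycles by one while preserving the weighted degree, producing a vertex of $\nabla Q_{n,k}^m$ that maps back to it. One also checks, exactly as in the previous paragraph, that a chain all of whose graphs have a lollipop at $p$ shares one such lollipop along the whole chain, so the image is a full subcomplex.

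For the homeomorphism statement, assume $2k<m+1$. Every vertex $\Gamma$ of $\nabla Q_{n,k}^{m+1}$ has weighted degree $d_w\le k$, so $2d_w\le 2k<m+1$, and Lemma~\ref{detect_things_at_p}(3), applied with $m+1$ distinguished cycles, produces a lollipop at $p$; hence every vertex lies in the image of $\mu$. In fact every simplex does: taking its top vertex $\Gamma$ and a lollipop $\ell$ at $p$ in $\Gamma$, no forest collapse that stays inside $\nabla K_{n,k}^{m+1}$ can meet the stick of $\ell$ (collapsing it would make $p$ distinguished) or its loop (loops lie in no forest), so the whole chain is obtained by wedging $\ell$ onto a chain in $\nabla Q_{n,k}^m$. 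Therefore $\mu$ is a continuous bijection between finite, hence compact Hausdorff, complexes, so it is a homeomorphism.

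For the homotopy-equivalence statement, suppose $3k/2<m+1$ and let $\Gamma$ be a vertex not in the image of $\mu$. Then $\Gamma$ has no lollipop at $p$, but since $3d_w/2\le 3k/2<m+1$, Lemma~\ref{detect_things_at_p}(4) gives $\Gamma$ a double-lollipop wedge summand at $p$. Writing $\Gamma=D\vee\Gamma'$ with $D$ the union of all $r\ge 1$ double lollipops at $p$, a weighted-degree count gives $d_w(D)=r$ and $d_w(\Gamma')=d_w(\Gamma)-r$, so $D$ lives in the orbit complex $\nabla K_{0,r}^{2r}/\SAut_0^{2r}$ of rank-$2r$ graphs assembled from double lollipops at $p$, while $\Gamma'$ lies in $\nabla K_{n,k-r}^{m+1-2r}/\SAut_n^{m+1-2r}$. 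As in the proof of Proposition~\ref{orbit_spaces_n_stable}, I would show that the star of $\Gamma$ splits as a product of the star of $D$ with the star of $\Gamma'$, reduce to understanding the local picture at $D$, retract the star of $\Gamma$ towards the image of $\mu$, and then glue these retractions together over all vertices outside the image, processed in order of decreasing weighted degree, exactly as in the bookkeeping of \cite[Proposition~5.4]{hatcher98}.

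The hard part, and the only genuine departure from the $n$-case, is the local analysis at a double lollipop at $p$. Unlike a theta summand — where collapsing any of its three edges merges $p$ with the opposite vertex and creates \emph{undistinguished} loops at $p$, landing at once in the image of $\nu$ — a double lollipop at $p$ admits \emph{no} collapse that stays inside $\nabla K_{n,k}^{m+1}$: collapsing the edge from $p$ into it would put a distinguished loop at $p$, and collapsing its interior edge would force the two distinguished cycles to coincide at a vertex (so that forest is not even admissible). So the retraction of the star of $\Gamma$ into the image of $\mu$ cannot be realized by a single blow-down of $\Gamma$ as in Proposition~\ref{orbit_spaces_n_stable}; I expect that one must instead use the high connectivity of the ``double-lollipop factor'' supplied by the Generalized Degree Theorem (Theorem~\ref{thrm:sublev_conn}), together with the rigidity forced by the degree constraint on $D$, to build the retraction and to verify that the retractions are mutually compatible. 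Making this step precise is where the bulk of the work will go.
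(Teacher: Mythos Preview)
Your setup is correct and matches the paper's: the homeomorphism for $2k<m+1$ goes through exactly as you say, and for $3k/2<m+1$ you correctly identify that one must split off the double-lollipop factor $D\in\nabla Q_{0,r}^{2r}$ and work locally there. You also pinpoint the real difficulty, namely that no single admissible blow-down of a double lollipop lands in the image of $\mu$.

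Where your proposal has a gap is in the mechanism you suggest for handling that difficulty. Invoking the Generalized Degree Theorem gives $(r-1)$-connectivity of $\nabla K_{0,r}^{2r}$, but that is a statement about the total space, not about the quotient, and in any case connectivity alone does not produce the deformation retraction of the star of $\Gamma$ into $\operatorname{im}\mu$ that you need. The paper does something more hands-on: it pushes the height function $h$ of Section~\ref{sec:morse} down to a function $\overline{h}$ on the quotient $\nabla Q_{0,r}^{2r}$ (this works because $h$ depends on the marking only through which cycles are distinguished) and then shows, for each $\Gamma$ outside $\operatorname{im}\mu$, that the descending link $\overline{\dlk}(\Gamma)$ with respect to $\overline{h}$ is contractible. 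Contractibility is obtained by a short case analysis on the three homeomorphism types of double lollipops (wedge point in the interior of the stick, on the loop, or at the loop/stick vertex): in each case one exhibits an explicit cone point, either a descending blow-down with essential term $d_0$ or a descending blow-up with essential term $m_1$, that is compatible with every simplex of $\overline{\dlk}(\Gamma)$. Then \cite[Lemma~4]{bux99} finishes. This is the missing ingredient in your sketch; the Generalized Degree Theorem does not substitute for it.
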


\begin{proof}
 If $2k<m+1$ then every~$\Gamma$ in $\nabla Q_{n,k}^{m+1}$ has a lollipop at~$p$, so~$\mu$ is a homeomorphism. Now suppose $3k/2<m+1$, and let~$\Gamma$ be a vertex not in the image of~$\mu$. Then~$\Gamma$ has no lollipops at~$p$ but does have at least one double lollipop. Let $\Lambda\Lambda$ be the subgraph of~$\Gamma$ consisting of all double lollipops at~$p$, say there are~$r\ge 1$ of them. Then $\Gamma=\Lambda\Lambda\vee\Gamma'$, for some~$\Gamma'$ with rank $n+m+1-2r$. The open star of~$\Gamma$ in $\nabla Q_{n,k}^{m+1}$ is the product of open stars of $\Lambda\Lambda$ in $\nabla Q_{0,r}^{2r}$ and~$\Gamma'$ in $\nabla Q_{n-2r,k-r}^{m+1}$. We claim that there is a retraction of the former that yields a retraction of the star of~$\Gamma$ into the image of~$\mu$, similar to the previous proof. Consider the height function~$h$ from Section~\ref{sec:morse}, thought of on~$\nabla K_{0,r}^{2r}$, and note that since~$h$ only depends on~$\rho$ inasmuch as~$\rho$ determines which cycles are distinguished,~$h$ descends to a function~$\overline{h}$ on $\nabla Q_{0,r}^{2r}$. Since $\nabla Q_{0,r}^{2r}$ is not simplicial we think of~$\overline{h}$ as a height function in the sense of~\cite{bux99}. It now suffices to show that the descending link $\overline{\dlk}(\Gamma)$ is contractible.

 There are three homeomorphism types of double lollipops, depending on where the first lollipop is wedged to the second. If it is wedged to a point in the interior of the stick, call this Type~$1$. If it is wedged to a point on the distinguished cycle not in the stick, call this Type~$2$. If it is wedged to the vertex shared by the loop and the stick call this Type~$3$. See Figure~\ref{fig:double_lollipops}. If $\Lambda\Lambda$ has a double lollipop of Type~$1$ then blowing down the edge connecting the wedge point to~$p$ is descending (with essential term~$d_0$). Moreover, every simplex in $\overline{\dlk}(\Gamma)$ is compatible with this move since descending blow-ups cannot affect double lollipops of Type~$1$, so it is a cone point of $\overline{\dlk}(\Gamma)$. Next, if $\Lambda\Lambda$ has a double lollipop of Type~$2$, then blowing down either edge connecting the wedge point to the top of the stick is descending (with essential term~$d_0$). These edges differ by a homeomorphism of~$\Gamma$, so they actually correspond to the same blow-down. Again, every simplex in $\overline{\dlk}(\Gamma)$ is compatible with this move since descending blow-ups cannot affect double lollipops of Type~$2$, so it is a cone point of~$\overline{\dlk}(\Gamma)$. Finally suppose $\Lambda\Lambda$ has a double lollipop of Type~$3$. Consider the blow-up that pushes the base of the first cycle away from the wedge point, creating a double lollipop of Type~$1$. This is descending, with essential term~$m_1$, and since descending (admissible) blow-downs cannot affect double lollipops of Type~$3$, it is a cone point for $\overline{\dlk}(\Gamma)$. We conclude that attaching~$\Gamma$ does not change the homotopy type, by \cite[Lemma~4]{bux99}, so the result follows.
\end{proof}

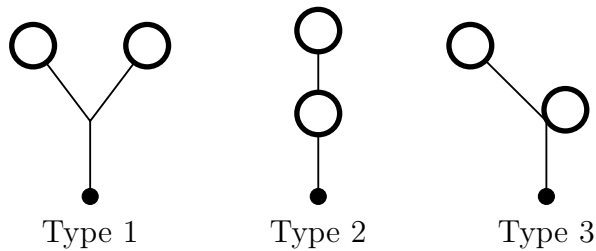
\begin{figure}[htb]
    \centering
    \begin{tikzpicture}%double lollipops
  \draw[line width=0.7pt]
   (0,0) -- (0,1)
   (0,1) -- (-0.6,1.8)
   (0,1) -- (0.6,1.8);
  \draw[line width=2pt]
   (-0.75,2) circle (8pt)
   (0.75,2) circle (8pt);
  \filldraw
   (0,0) circle (3pt);
  \node at (0,-0.5) {Type $1$};

  \begin{scope}[xshift=3cm]
   \draw[line width=0.7pt]
   (0,0) -- (0,0.8)
   (0,1.4) -- (0,1.9);
  \draw[line width=2pt]
   (0,1.1) circle (8pt)
   (0,2.2) circle (8pt);
  \filldraw
   (0,0) circle (3pt);
  \node at (0,-0.5) {Type $2$};
  \end{scope}

  \begin{scope}[xshift=6cm]
   \draw[line width=0.7pt]
   (0,0) -- (0,1)
   (0,1) -- (-0.8,1.8);
  \draw[line width=2pt]
   (0.25,1.15) circle (8pt)
   (-1,2) circle (8pt);
  \filldraw
   (0,0) circle (3pt);
  \node at (0,-0.5) {Type $3$};
  \end{scope}

\end{tikzpicture}
    \caption{Types of double lollipops.}
    \label{fig:double_lollipops}
\end{figure}

There is evidence to suggest that the descending links $\overline{\dlk}(\Gamma)$ are always contractible whenever there is a non-trivial wedge summand that is an iterated wedge of lollipops. As indicated by Remark~\ref{better_bounds}, this would imply that~$\mu$ is a homotopy equivalence whenever~$k\le m$. From this we would also recover the fact that $\SAut_0^m$ has trivial rational homology. For now though, we will content ourselves with the double lollipop situation.

Since~$\nu$ is natural with respect to $G_n^m \hookrightarrow G_{n+1}^m$ and~$\mu$ is natural with respect to $\SAut_n^m \hookrightarrow  \SAut_n^{m+1}$, we can now prove our main results.

\begin{proof}[Proof of Theorem~\ref{hom_stab_thm}]
 We know that for $0\le k< N$, if $(3k+m)/2<n+1$ then
 $$H_i(G_n^m;\Q)\to H_i(G_{n+1}^m;\Q)$$
 is an isomorphism for all $i<k$, by Lemma~\ref{gp_to_space_hlgy} and Proposition~\ref{orbit_spaces_n_stable}. Assume $n\ge (3(i+1)+m)/2$, so in particular $n\ge 2$, and set $k=i+1$. Then $(3k+m)/2<n+1$ and $k\le (2n-m)/3$, which is less than~$N$ since $n\ge 2$. The result now follows.
\end{proof}

Note that when~$m=0$, so $G_n^0=\Aut(F_n)$, we recover the stability bound for $\Aut(F_n)$ given in \cite{hatcher98}, though not the improved one given in \cite{hatcher98b}.

\begin{proof}[Proof of Theorem~\ref{hom_stab_thm2}]
 We know that for $0\le k< N$, if $3k/2<m+1$ then
 $$H_i(\SAut_n^m;\Q) \to H_i(\SAut_n^{m+1};\Q)$$
 is an isomorphism for all $i<k$, by Lemma~\ref{gp_to_space_hlgy} and Proposition~\ref{orbit_spaces_m_stable}. If $n=0$ then the homology groups are all~$0$ by \cite{griffin12,wilson12}, so we can assume $n\ge 1$. Suppose $m>(3i+1)/2$, so in particular $m\ge 1$, and set $k=i+1$. Then $3k/2=3(i+1)/2<m+1$, and also since $n,m\ge 1$ we get $k<(2m+2)/3\le 2n+m-1=N$, so $k< N$. The result now follows.
\end{proof}

\section{Connectivity}\label{sec:connectivity}

The rest of this paper is devoted to proving the Generalized Degree Theorem, Theorem~\ref{thrm:sublev_conn}. This amounts to analyzing the connectivity of descending links of vertices in~$\Delta K_n^m$, with the main result being Corollary~\ref{desc_lk_conn}. In reading these subsections, the reader may find it helpful to refer to the corresponding sections in \cite{zar_degree_thm}, which cover what amounts here to the classical~$m=0$ case.

We first collect some natural definitions that will be used in these subsections, including the important notion of a \emph{decisive edge} in a graph.

\begin{definition}[Edges in graphs]\label{def:more_graph_defs}
 For an edge~$\epsilon$ in a basepointed graph~$\Gamma$ with vertices~$v$ and~$v'$, we call~$\epsilon$ \emph{horizontal} if~$d(p,v)=d(p,v')$. Otherwise we call~$\epsilon$ \emph{vertical}. If $\epsilon$ is vertical, by comparing $d(v,p)$ and $d(v',p)$ we get a natural notion of the \emph{top} vertex and \emph{bottom} vertex of~$\epsilon$. A half-edge may also have either a top or a bottom. If a vertex~$v$ has only one incident vertical edge~$\epsilon$ with~$v$ as its top, we call that edge \emph{decisive at~$v$}. In other words, if every minimal-length path from~$v$ to~$p$ begins with~$\epsilon$, then~$\epsilon$ is decisive at~$v$. If an edge~$\epsilon$ in~$\Gamma$ is decisive at its top vertex we call it a decisive edge. For example any separating edge is decisive.
\end{definition}

\subsection{Connectivity of the descending down-link}\label{sec:down_conn}

In this section we analyze the down-link of~$\Gamma$. In order to get an induction to run, we will need to lift the restriction on the valency of vertices. Our height function $h$ does not work well with such graphs though, for instance the trivalency of non-basepoint vertices is crucial to the fact that blowing down a forest~$F$ either increases $n_{D(F)}$ or decreases $d_{D(F)}$. Thanks to Lemma~\ref{which_forests_desc} though, we have a condition on forests that is equivalent to being descending for graphs $\Gamma\in\Delta K_n^m$, and does not refer to the functions $n_i$ or $d_i$. For lack of a more clever name, we will call such forests \emph{good} (defined below). For the rest of this subsection,~$\Gamma$ is a connected graph with basepoint~$p$ and~$m$ disjoint distinguished cycles, with no restriction on the valency of vertices. The definitions of~$\Lambda_i$ and~$m_i$ remain valid, and are as given above. A reduced, non-self-intersecting edge path~$\gamma$ in~$\Gamma$ will be called an \emph{arc} if both of its endpoints lie in~$\Lambda_{D(\gamma)}$.

\begin{definition}[Good forests]\label{def:good_forests}
 Let~$F$ be an admissible forest in~$\Gamma$. Define
 $$\Delta m_i(\Gamma,F)\defeq m_i(\Gamma/F)-m_i(\Gamma)$$
 for any~$i$. Now let $i\defeq D(F)$. If $\Delta m_i(\Gamma,F)<0$ call~$F$ \emph{base-decreasing}, if $\Delta m_i(\Gamma,F)>0$ call~$F$ \emph{base-increasing} and if $\Delta m_i(\Gamma,F)=0$ call~$F$ \emph{base-preserving}. If~$F$ connects vertices in $\Lambda_i$, or equivalently if~$F$ contains an arc~$\gamma$ with $D(\gamma)=D(F)$, call~$F$ \emph{arced}. If~$F$ does not connect vertices in $\Lambda_i$, call~$F$ \emph{arc-free}. Finally, if~$F$ is base-decreasing, or if it is base-preserving and arc-free, call~$F$ \emph{good}. A forest is \emph{bad} if it is not good.
\end{definition}

Lemma~\ref{which_forests_desc} says that for any $\Gamma\in\Delta K_n^m$, a forest~$F$ in~$\Gamma$ is descending if and only if it is good.

\begin{remark}[Good/bad edges and distinguished paths]\label{edges_and_paths}
 There are a few important technical observations about single edge forests that we collect here. A vertical edge is arc-free and cannot be base-decreasing, and a distinguished vertical edge cannot be base-increasing, so must be base-preserving and arc-free, hence good. A horizontal edge is arced and cannot be base-increasing, and a base-decreasing horizontal edge must be distinguished. Hence a horizontal edge is good if and only if it is distinguished and base-decreasing, i.e., connects two base vertices.

 It is also easy to see whether an edge path~$\gamma$ in a distinguished cycle $C$ is good or bad. Such a~$\gamma$ cannot be base-increasing, so if~$\gamma$ is arc-free then it is automatically good. If~$\gamma$ is arced and $D(\gamma)=i_C$, then~$\gamma$ contains an arc connecting base vertices and so is base-decreasing, hence good. If~$\gamma$ is arced and $D(\gamma)>i_C$ then it is base-preserving, hence bad. To summarize,~$\gamma$ is bad if it is arced and $D(\gamma)>i_C$, and otherwise is good. See Figure~\ref{fig:dstg_edge_path} for some examples.
\end{remark}

\begin{figure}[htb]
    \centering
    \begin{tikzpicture}[scale=0.8]%good and bad distinguished edge paths
  
  \draw[line width=4pt]
   (0,0) -- (-1.5,0) to [out=80, in=180,looseness=1] (0,2) to [out=0, in=100,looseness=1] (1.5,0) -- (0,0);

  \draw[line width=1.3pt, gray]
   (-1.5,0) to [out=80, in=180,looseness=1] (0,2) to [out=0, in=100,looseness=1] (1.5,0);

  \node at (0,-0.5) {good};

 \begin{scope}[xshift=4cm]
  \draw[line width=4pt]
   (0,0) -- (-1.5,0) to [out=80, in=180,looseness=1] (0,2) to [out=0, in=100,looseness=1] (1.5,0) -- (0,0);

  \draw[line width=1.3pt, gray]
   (-1.5,0) to [out=80, in=180,looseness=1] (0,2) to [out=0, in=110,looseness=1] (1.32,0.8);

  \node at (0,-0.5) {good};
 \end{scope}

 \begin{scope}[xshift=8cm]
  \draw[line width=4pt]
   (0,0) -- (-1.5,0) to [out=80, in=180,looseness=1] (0,2) to [out=0, in=100,looseness=1] (1.5,0) -- (0,0);

  \draw[line width=1.3pt, gray]
   (-1.32,0.8) to [out=70, in=180,looseness=1] (0,2) to [out=0, in=110,looseness=1] (1.32,0.8);

  \node at (0,-0.5) {bad};
 \end{scope}

 \begin{scope}[xshift=12cm]
  \draw[line width=4pt]
   (0,0) -- (-1.5,0) to [out=80, in=180,looseness=1] (0,2) to [out=0, in=100,looseness=1] (1.5,0) -- (0,0);

  \draw[line width=1.3pt, gray]
   (-1.32,0.8) to [out=70, in=180,looseness=1] (0,2) to [out=0, in=140,looseness=1] (0.9,1.6);

  \node at (0,-0.5) {good};
 \end{scope}

\end{tikzpicture}
    \caption{Good and bad distinguished edge paths.}
    \label{fig:dstg_edge_path}
\end{figure}
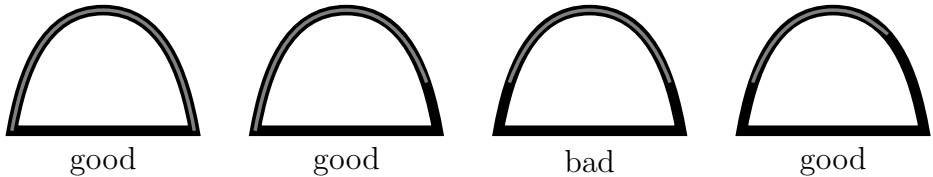

\noindent\textbf{Posets of forests:} Let $P(\Gamma)$ be the poset of good admissible forests in~$\Gamma$. For $\Gamma\in\Delta K_n^m$, the down-link of~$\Gamma$ is the geometric realization $|P(\Gamma)|$ of $P(\Gamma)$, so the goal of this section is to calculate the homotopy type of $|P(\Gamma)|$. For the rest of this section we will omit the vertical bars, and just refer to $P(\Gamma)$ as having a homotopy type. For each edge~$\epsilon$ of~$\Gamma$, let $P_1(\Gamma,\epsilon)$ be the poset of all good admissible forests except the forest just consisting of~$\epsilon$, and let $P_0(\Gamma,\epsilon)\subseteq P_1(\Gamma,\epsilon)$ be the poset of good admissible forests that do not contain~$\epsilon$. Whenever~$\Gamma$ and~$\epsilon$ are understood from context we will just write~$P$,~$P_1$ and~$P_0$. We call~$P_1(\Gamma,\epsilon)$ the \emph{deletion} of~$\epsilon$, and $P_0(\Gamma,\epsilon)$ the \emph{strong deletion} of~$\epsilon$.

\begin{lemma}[Strong deletion of distinguished edge]\label{P0_dstgd_edge}
 For an admissible distinguished edge~$\epsilon$, $P_0(\Gamma,\epsilon)$ is contractible.
\end{lemma}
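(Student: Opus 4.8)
The plan is to exhibit a conical contraction of the poset $P_0(\Gamma,\epsilon)$. Let $C$ be the distinguished cycle containing $\epsilon$. Since $\epsilon$ is an admissible edge it is not a loop, so $C$ has at least two edges, and $\bar\epsilon \defeq C\setminus\{\epsilon\}$ is a reduced edge path inside $C$; in particular $\bar\epsilon$ is a subtree of $\Gamma$ whose vertex set is exactly that of $C$. Because $\bar\epsilon$ therefore contains a base vertex of $C$, we get $D(\bar\epsilon)=i_C$, so by Remark~\ref{edges_and_paths} (a distinguished edge path is bad only when it is arced with level strictly above $i_C$) the forest $\bar\epsilon$ is good; it is admissible, being a connected edge path inside a single distinguished cycle; and it does not contain $\epsilon$. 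Hence $\bar\epsilon\in P_0(\Gamma,\epsilon)$, and I would define $\sigma\colon P_0(\Gamma,\epsilon)\to P_0(\Gamma,\epsilon)$ by $\sigma(F)=F\cup\bar\epsilon$. Granting that $\sigma$ takes values in $P_0(\Gamma,\epsilon)$, one has $F\le\sigma(F)\ge\bar\epsilon$ for every $F$, so $\sigma$ is an order-preserving self-map lying above both the identity and the constant map at~$\bar\epsilon$; the standard fact that a poset admitting such a conical contraction has contractible geometric realization then yields the result.

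The first thing to check is that $F\cup\bar\epsilon$ is again a good admissible forest not containing~$\epsilon$ whenever $F$ is. That $\epsilon\notin F\cup\bar\epsilon$ is immediate. For the forest property: since $F$ is admissible, each of its trees $T$ meets $C$ in a single vertex or a connected edge path, and as $\epsilon\notin F$ this intersection avoids $\epsilon$, so $T\cap\bar\epsilon$ is connected; thus $F\cup\bar\epsilon$ is built by gluing the tree $\bar\epsilon$ to the pairwise disjoint trees of $F$ along connected intersections, which creates no cycle. Admissibility is then easy: the sole new tree is $\bar\epsilon$ together with those trees of $F$ that met $C$, it meets $C$ in the connected path $\bar\epsilon$ and meets no other distinguished cycle (each old tree met at most $C$, and $\bar\epsilon\subseteq C$ is disjoint from the other, disjoint, distinguished cycles); moreover collapsing $\bar\epsilon$ simply turns $C$ into a loop, so $\Gamma/(F\cup\bar\epsilon)$ is still viable with admissible marking.

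The main obstacle is showing that $F\cup\bar\epsilon$ is \emph{good}. This is not formal, since a union of two good forests need not be good, so I would argue by cases comparing $D(F)$ with $i_C=D(\bar\epsilon)$: writing $j=D(F\cup\bar\epsilon)=\min(D(F),i_C)$, one analyzes $\Delta m_j(\Gamma,F\cup\bar\epsilon)$ together with whether $F\cup\bar\epsilon$ is arced at level $j$, invoking Remark~\ref{edges_and_paths} as needed. The delicate feature is that collapsing $\bar\epsilon$ can pull vertices of $C$ lying above level $i_C$ down to level $i_C$, so one must first understand how the subtrees of $F$ hanging off $C$ behave after this collapse before appealing to the goodness of $F$; the base-vertex bookkeeping at level $j$ is where the real content lies, and the cases $i_C<D(F)$, $i_C=D(F)$, and $i_C>D(F)$ need slightly different treatments (the situation where $C$ collapses onto a single base vertex being the most transparent). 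Once this is established, the conical-contraction argument of the first paragraph finishes the proof.
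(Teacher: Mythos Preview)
Your approach is essentially the same as the paper's: take $\bar\epsilon=C\setminus\{\epsilon\}$ as a cone point and show $F\mapsto F\cup\bar\epsilon$ is a well-defined poset self-map of $P_0$, then apply Quillen's conical-contraction criterion. Your checks that $\bar\epsilon\in P_0$ and that $F\cup\bar\epsilon$ is an admissible forest not containing $\epsilon$ are fine.

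Where you leave things sketchy---the goodness of $F\cup\bar\epsilon$---the paper has a cleaner argument than your proposed three-way case split on $D(F)$ versus $i_C$. Rather than tracking base-vertex bookkeeping at level $j=\min(D(F),i_C)$ directly, the paper collapses $F$ first and looks at the image $\bar\epsilon'$ of $\bar\epsilon$ in $\Gamma/F$, so that $\Gamma/(F\cup\bar\epsilon)=(\Gamma/F)/\bar\epsilon'$. The point is that $\bar\epsilon'$ is again an edge path in a distinguished cycle, so Remark~\ref{edges_and_paths} applies to it: it is never base-increasing, and if arced it is base-decreasing. From this one reads off immediately that (i) if $F$ is base-decreasing then so is $F\cup\bar\epsilon$, and (ii) if $F$ is base-preserving and arc-free, then $F\cup\bar\epsilon$ is not base-increasing, and moreover if it is arced then $\bar\epsilon'$ must be arced, hence base-decreasing, hence $F\cup\bar\epsilon$ is base-decreasing. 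This bypasses your case analysis entirely; you might find it worth adopting, since your proposed treatment of the case $i_C>D(F)$ (where collapsing $F$ can change the level structure of $C$ before $\bar\epsilon$ comes into play) is exactly where the $\bar\epsilon'$ trick does the work for you.
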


\begin{proof}
 Let $C$ be the distinguished cycle containing~$\epsilon$, and let~$\phi$ be the forest consisting of all edges of~$C$ other than~$\epsilon$. Since $D(\phi)=i_C$,~$\phi$ is good by Remark~\ref{edges_and_paths}. Let $f \colon P_0\to P_0$ be given by
 $$F\mapsto F\cup\phi.$$
 We claim that for~$F\in P_0$, $F\cup\phi$ is an admissible good forest, so~$f$ is well defined. Since~$\epsilon\not\in F$, and~$F$ is admissible, it is clear that~$F\cup\phi$ is an admissible forest. Let~$\phi'$ be the image of~$\phi$ in~$\Gamma/F$, so
 $$\Gamma/F\cup\phi=(\Gamma/F)/\phi'.$$
 By Remark~\ref{edges_and_paths},~$\phi'$ is not base-increasing, which tells us that if~$F$ is base-decreasing then so is~$F\cup\phi$, and so the claim follows. The other way~$F$ can be good is if it is base-preserving and arc-free. Then by the same argument,~$F\cup\phi$ is not base-increasing, so it suffices to show that if~$F\cup\phi$ is arced, then it is base-decreasing. If~$\phi$ itself is arced then it must be base-decreasing, which implies~$F\cup\phi$ is base-decreasing. Suppose instead that~$\phi$ is arc-free (and recall that~$F$ is too). For $F\cup\phi$ to be arced then, we need that $D(F)=D(\phi)=:i$ and that~$\phi'$ is arced. But by Remark~\ref{edges_and_paths}, if $\phi'$ is arced then it is base-decreasing, in which case $F\cup\phi$ is base-decreasing, so the claim follows in this case as well.

 We conclude that~$f$ is well defined, and so it follows from \cite[Section~1.5]{quillen78} that~$P_0$ is contractible.
\end{proof}

\noindent\textbf{Optimal edges:} For an admissible edge~$\epsilon$ with endpoints $v_1$ and $v_2$, call~$\epsilon$ \emph{maximally distant} if among all admissible edges,~$\epsilon$ maximizes the quantity $d(p,v_1)+d(p,v_2)$. This quantity is even (resp. odd) if~$\epsilon$ is horizontal (resp. vertical). Hence all maximally distant edges have the same orientation, i.e., horizontal or vertical. If a maximally distant edge~$\epsilon$ maximizes the quantity $\Delta m_{D(\epsilon)}(\Gamma,\epsilon)$ among all maximally distant edges, call~$\epsilon$ \emph{optimal}. Note that if there exists a \emph{good} optimal edge, then either every maximally distant edge is vertical and good, or else every maximally distant edge is horizontal and connects base vertices (and so is good).

\begin{proposition}[From $P_0$ to $P_1$]\label{P1_P0}
 Let~$\epsilon$ be an optimal maximally distant edge. Then $P_1(\Gamma,\epsilon)$ is homotopy equivalent to $P_0(\Gamma,\epsilon)$.
\end{proposition}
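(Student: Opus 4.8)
The plan is to relate the poset $P_1(\Gamma,\epsilon)$ (all good admissible forests except the singleton $\{\epsilon\}$) to the smaller poset $P_0(\Gamma,\epsilon)$ (good admissible forests avoiding $\epsilon$) by a poset-map argument in the spirit of Quillen's Theorem A and fiber-type lemmas, as used throughout \cite{quillen78,zar_degree_thm}. The inclusion $P_0\hookrightarrow P_1$ is the obvious map; to show it is a homotopy equivalence I would exhibit a deformation of $P_1$ onto $P_0$ obtained by ``removing $\epsilon$ and repairing'' each forest. Concretely, for $F\in P_1$ with $\epsilon\in F$, I want to replace $\epsilon$ by some other edge so that the result stays a good admissible forest and lies below or above $F$ in a way that gives a contraction. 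The role of $\epsilon$ being \emph{optimal maximally distant} is precisely to guarantee that such a replacement exists and preserves goodness: since $\epsilon$ is maximally distant, no edge of $\Gamma$ is ``deeper'' than $\epsilon$, so $D(\epsilon)$ is maximal; and optimality pins down whether the maximally distant edges are all vertical (hence good, arc-free, base-preserving) or all horizontal connecting base vertices (hence good and base-decreasing). This dichotomy is what lets me treat the two cases uniformly.

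First I would invoke Lemma~\ref{P0_dstgd_edge} as the model: there the map $F\mapsto F\cup\phi$ worked because $\phi$ was a fixed good forest that could always be adjoined without spoiling goodness. Here the analogous trick is more delicate because $\epsilon$ itself sits at the maximal depth, so I cannot simply adjoin a fixed big forest. Instead I would use a \emph{zig-zag} (as in \cite[Section~1.5]{quillen78}): consider the subposet $Z\subseteq P_1$ of good admissible forests that, when we look at their component containing an endpoint of $\epsilon$ at depth $D(\epsilon)$, either avoid $\epsilon$ or can be ``slid off'' $\epsilon$. The key structural point is that, by maximal distance of $\epsilon$, any good admissible forest $F$ containing $\epsilon$ has the endpoint(s) of $\epsilon$ as leaves of $F$ in the relevant component, or at worst $\epsilon$ is a pendant edge of a tree of $F$; this is because no admissible edge reaches beyond depth $D(\epsilon)$, so the tree of $F$ through $\epsilon$ cannot branch ``downward'' past $\epsilon$. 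That turns the replacement of $\epsilon$ into a local move.

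The heart of the argument is then a fiberwise contractibility check: for each $F\in P_1\setminus P_0$ I would consider the ``link'' of $F$ inside $P_1$ relative to the strong deletion, and show it is contractible, invoking the usual lemma (cf.\ \cite[Section~1.5]{quillen78} or the Nerve/Fiber lemmas in \cite{bestvina97}) that a poset map with contractible fibers (or a zig-zag of such) is a homotopy equivalence. In the vertical case, $\epsilon$ is decisive at its top vertex $v$ (being the unique vertical edge with $v$ on top at maximal depth, up to the symmetry that optimality controls), so removing $\epsilon$ from $F$ and letting $v$ become a lower-valence vertex — then blowing down some incident edge if $v$ becomes bivalent — gives a canonical element of $P_0$ below $F$; goodness is preserved because $\epsilon$ was vertical, arc-free, and base-preserving, so deleting it from $F$ cannot turn a good forest bad (one checks it cannot create an arc at level $D(F)$ nor change $m_{D(F)}$ adversely). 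In the horizontal case, $\epsilon$ connects two base vertices, and I would use that any good $F\ni\epsilon$ can be pushed to $F\setminus\{\epsilon\}\cup\{\epsilon'\}$ for a neighboring horizontal good edge, or simply to $F\setminus\{\epsilon\}$, again staying good because base-decreasing-ness at level $D(\epsilon)$ is inherited.

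The main obstacle I anticipate is precisely the case analysis for \emph{preservation of goodness} under the replacement/deletion move, and in particular the horizontal subcase: when $\epsilon$ is removed from a base-preserving arc-free forest $F$ at level $D(F)=D(\epsilon)$, one must rule out that $F\setminus\{\epsilon\}$ (or its repair) becomes base-\emph{increasing} or picks up an arc it shouldn't, which could happen if some tree of $F$ was using $\epsilon$ essentially to keep two base vertices together. This is where maximal distance and optimality must be used sharply — optimality ensures $\Delta m_{D(\epsilon)}(\Gamma,\epsilon)$ is as large as possible, so $\epsilon$ is ``doing the most base-reducing work,'' and any other maximally distant edge does at least as well, which is what I would need to reroute the forest. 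Once that bookkeeping is pinned down, assembling the homotopy equivalence via the Quillen fiber lemma is routine; I would structure the final writeup as: (1) establish the vertical-vs-horizontal dichotomy from optimality, (2) define the repair map $P_1\setminus P_0\to P_0$, (3) check well-definedness (goodness, admissibility), (4) check the fibers of the induced zig-zag are contractible, (5) conclude.
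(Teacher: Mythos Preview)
Your overall framework (Quillen-style retraction of $P_1$ onto $P_0$ via removing $\epsilon$) is the right starting point, and in fact the paper begins exactly this way: it introduces an intermediate poset $P_{\frac{1}{2}}$ of good $F$ with $F\setminus\{\epsilon\}$ still good, and retracts $P_{\frac{1}{2}}$ onto $P_0$ by $F\mapsto F\setminus\{\epsilon\}$. However, your case split and the key claim supporting it are both off. The relevant dichotomy is not vertical versus horizontal but \emph{undistinguished versus distinguished}. When $\epsilon$ is undistinguished, its image $\epsilon'$ in $\Gamma/(F\setminus\{\epsilon\})$ can never be base-decreasing, so $\Delta m_i(\Gamma,F)\ge \Delta m_i(\Gamma,F\setminus\{\epsilon\})$ and goodness of $F\setminus\{\epsilon\}$ follows; this gives $P_{\frac{1}{2}}=P_1$ outright. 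Your assertion that in the vertical case $\epsilon$ is decisive at its top is false (a maximally distant edge is decisive only if it is separating), and more importantly, when $\epsilon$ is \emph{distinguished}---vertical or not---removing it can genuinely destroy goodness: if the tree of $F$ through $\epsilon$ forms an arc at level $D(F)$ that connects two base vertices, then $F$ is base-decreasing solely because of that arc, and $F\setminus\{\epsilon\}$ is base-preserving and arced, hence bad. So ``base-decreasing-ness is inherited'' is exactly what fails, and your proposed repair $F\setminus\{\epsilon\}\cup\{\epsilon'\}$ has no canonical $\epsilon'$ to offer.

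The paper resolves the distinguished case not by a single poset retraction but by a secondary Morse argument on $P_1\setminus P_{\frac{1}{2}}$: it puts a height $e(F)=|F|$ on the offending forests and shows each descending link is contractible. The cone point is not obtained by deleting $\epsilon$ but by passing to the minimal arc $\gamma\subseteq F\cap C$ (where $C$ is the distinguished cycle through $\epsilon$) containing $\epsilon$ with $D(\gamma)=D(F)$; one checks $\gamma$ is good, $\gamma\setminus\{\epsilon\}$ is good (so $\gamma$ really lies in the in-link, i.e.\ $\gamma\neq F$), and that $F'\cup\gamma$ stays in the in-link for every $F'\subsetneq F$ in it. That last step requires showing that removing \emph{any} nonempty $\delta\subseteq\gamma$ from $F$ already makes it bad, which is a short but nontrivial computation with $\Delta m_i$. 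None of this structure is visible in your outline; the missing idea is precisely the arc $\gamma$ and the Morse build-up from $P_{\frac{1}{2}}$ to $P_1$.
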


\begin{proof}
 We begin by finding an intermediate poset that is easily seen to be homotopy equivalent to $P_0$. Let $P_{\frac{1}{2}}=P_{\frac{1}{2}}(\Gamma,\epsilon)$ be the subcomplex of~$P$ spanned by good admissible forests~$F$ for which $F \setminus \{\epsilon\}$ is again a (non-empty) good admissible forest. Call $P_{\frac{1}{2}}$ the \emph{sufficiently strong deletion} of $\epsilon$. Clearly
 $$P_0\subseteq P_{\frac{1}{2}}\subseteq P_1.$$
 Let $f \colon P_{\frac{1}{2}} \to P_{\frac{1}{2}}$ be given by $F\mapsto F\setminus\{\epsilon\}$. This is a well defined poset map that is the identity on its image~$P_0$, and so induces a homotopy equivalence between~$P_{\frac{1}{2}}$ and~$P_0$ by \cite[Section~1.3]{quillen78}.

 \noindent\textbf{Case 1: Undistinguished optimal edge:} First suppose that~$\epsilon$ is undistinguished, and we claim that $P_{\frac{1}{2}}=P_1$. Let~$F\in P_1$ and let~$i\defeq D(F)$. We want to show that $F\setminus\{\epsilon\}$ is good. We may assume~$\epsilon$ is (properly) contained in~$F$, which since~$\epsilon$ is maximally distant tells us that $D(F\setminus\{\epsilon\})=i$. If~$\epsilon'$ is the image of~$\epsilon$ in $\Gamma/(F\setminus\{\epsilon\})$ then~$\epsilon'$ is undistinguished, and so cannot be base-decreasing. Hence
 $$\Delta m_i(\Gamma,F)=\Delta m_i(\Gamma/(F\setminus\{\epsilon\}),\epsilon')+\Delta m_i(\Gamma,F\setminus\{\epsilon\})\ge\Delta m_i(\Gamma,F\setminus\{\epsilon\}).$$
 Clearly if~$F$ is arc-free then $F\setminus\{\epsilon\}$ is too. From this fact and the above equation, we conclude that if~$F$ is good then so is $F\setminus\{\epsilon\}$. We remark that so far we have not used the hypothesis that~$\epsilon$ is optimal.

 \noindent\textbf{Case 2: Distinguished optimal edge:} Now assume~$\epsilon$ is distinguished, so we know $\Delta m_{D(\epsilon)}(F,\epsilon)\le0$. We have to do a bit more work in this case. Define a height function~$e$ on~$P_1$ as follows. For~$F\in P_1$, if $F\in P_{\frac{1}{2}}$ set~$e(F)=0$ and otherwise let~$e(F)$ be the number of edges in~$F$. Since adjacent vertices (forests) in $P_1\setminus P_{\frac{1}{2}}$ have different~$e$ values, we can build up from~$P_{\frac{1}{2}}$ to~$P_1$ by gluing in vertices along their descending links. We claim these descending links are contractible, so by \cite[Corollary~2.6]{bestvina97} the homotopy type does not change, and the result follows. The descending link of~$F$ is the join of two subcomplexes, which we will call the \emph{out-link} and the \emph{in-link}. The out-link is spanned by forests in~$P_{\frac{1}{2}}$ containing~$F$, and the in-link by forests in~$P_1$ properly contained in~$F$. It suffices to show that the in-link is contractible.

 \noindent\textbf{Calculating $\Delta m_i$:} A forest~$F$ in~$P_1$ but not in~$P_{\frac{1}{2}}$ is characterized by~$F$ being good and $F\setminus\{\epsilon\}$ being bad. This is a relatively specific situation, and we will be able to restrict the possibilities quite a bit. First of all, $\epsilon\subseteq F$, and~$\epsilon$ is maximally distant so $D(F\setminus\{\epsilon\})=i\defeq D(F)$. Consider again the equation
 $$\Delta m_i(\Gamma,F)=\Delta m_i(\Gamma/(F\setminus\{\epsilon\}),\epsilon')+\Delta m_i(\Gamma,F\setminus\{\epsilon\}),$$
 where~$\epsilon'$ is the image of~$\epsilon$ in $\Gamma/(F\setminus\{\epsilon\})$. Since~$F$ is good and $F\setminus\{\epsilon\}$ is bad, and since if~$F$ is arc-free then so is $F\setminus\{\epsilon\}$, it is clear that $\Delta m_i(\Gamma/(F\setminus\{\epsilon\}),\epsilon')$ cannot be~$0$ or~$1$. The only other option is that it equals $-1$. This implies that~$\epsilon'$ connects base vertices, and so in particular~$F$ must be arced, with an arc containing~$\epsilon$ and connecting base vertices. Since~$F$ is good it therefore must be base-decreasing, and so we conclude that
 \begin{align*}
  &\Delta m_i(\Gamma,F)=-1\text{,} \\
  &\Delta m_i(\Gamma/(F\setminus\{\epsilon\}),\epsilon')=-1 \\
  \text{and }&\Delta m_i(\Gamma,F\setminus\{\epsilon\})=0\text{.}
 \end{align*}
 Then since $F\setminus\{\epsilon\}$ is bad, it must be arced.

 \noindent\textbf{A crucial arc in $F$:} Let $C$ be the distinguished cycle containing~$\epsilon$. Since $\epsilon\subseteq F$ and~$F$ is admissible, we know $F\cap C$ is a forest. Let~$\gamma'$ be the connected edge path in $F\cap C$ containing~$\epsilon$. By the previous paragraph, we see that~$\gamma'$ must contain an arc at level $D(F)$ that in turn contains~$\epsilon$. Let~$\gamma$ be the shortest arc in~$\gamma'$ containing~$\epsilon$ with $D(\gamma)=D(F)$. If $\gamma=\epsilon$ then $D(F)=D(\epsilon)$, and~$\epsilon$ being both an arc and an optimal edge implies that it, and so every edge of~$F$, is horizontal and connects base vertices. Hence $F\setminus\{\epsilon\}$ is base-decreasing, which we know is not the case. We can therefore assume~$\gamma$ properly contains~$\epsilon$. According to Remark~\ref{edges_and_paths},~$\gamma$ is base-decreasing, hence good, and it is easy to see that $\gamma\setminus\{\epsilon\}$ is arc-free and non-base-increasing, so also good. Since $F\setminus\{\epsilon\}$ is bad, this means~$\gamma$ does not equal~$F$, so~$\gamma$ is really in the in-link. See Figure~\ref{fig:tricky_detail} for an idea of~$\gamma'$ and~$\gamma$.

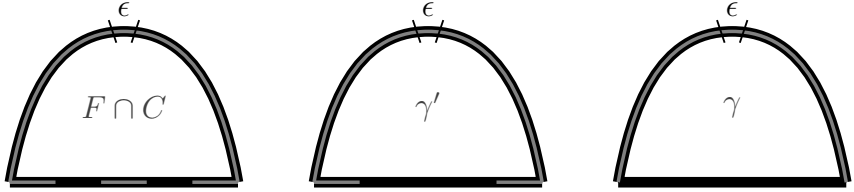
\begin{figure}[htb]
    \centering
    \begin{tikzpicture}%tricky detail
  
  \draw[line width=4pt]
   (-1.5,0) -- (1.5,0)
   (-1.5,0) to [out=80, in=180,looseness=1] (0,2) to [out=0, in=100,looseness=1] (1.5,0);

  \draw[line width=1.3pt, gray]
   (-1.5,0) -- (-0.9,0)   (-0.3,0) -- (0.3,0)   (0.9,0) -- (1.5,0)
   (-1.5,0) to [out=80, in=180,looseness=1] (0,2) to [out=0, in=100,looseness=1] (1.5,0);

  \draw[line width=0.7]
   (-0.2,2.15) -- (-0.1,1.85)   (0.2,2.15) -- (0.1,1.85);

  \node at (0,2.3) {$\epsilon$};

  \node[darkgray] at (0,1) {$F\cap C$};

  \begin{scope}[xshift=4cm]
   \draw[line width=4pt]
   (-1.5,0) -- (1.5,0)
   (-1.5,0) to [out=80, in=180,looseness=1] (0,2) to [out=0, in=100,looseness=1] (1.5,0);

  \draw[line width=1.3pt, gray]
   (-1.5,0) -- (-0.9,0)   (0.9,0) -- (1.5,0)
   (-1.5,0) to [out=80, in=180,looseness=1] (0,2) to [out=0, in=100,looseness=1] (1.5,0);

  \draw[line width=0.7]
   (-0.2,2.15) -- (-0.1,1.85)   (0.2,2.15) -- (0.1,1.85);

  \node at (0,2.3) {$\epsilon$};

  \node[darkgray] at (0,1) {$\gamma'$};
  \end{scope}

  \begin{scope}[xshift=8cm]
   \draw[line width=4pt]
   (-1.5,0) -- (1.5,0)
   (-1.5,0) to [out=80, in=180,looseness=1] (0,2) to [out=0, in=100,looseness=1] (1.5,0);

  \draw[line width=1.3pt, gray]
   (-1.5,0) to [out=80, in=180,looseness=1] (0,2) to [out=0, in=100,looseness=1] (1.5,0);

  \draw[line width=0.7]
   (-0.2,2.15) -- (-0.1,1.85)   (0.2,2.15) -- (0.1,1.85);

  \node at (0,2.3) {$\epsilon$};

  \node[darkgray] at (0,1) {$\gamma$};
  \end{scope}

\end{tikzpicture}
    \caption{$F\cap C$,~$\gamma'$ and~$\gamma$.}
    \label{fig:tricky_detail}
\end{figure}

 \noindent\textbf{Contractibility of the in-link:} The idea now is to retract the in-link to the relative star of~$\gamma$. We claim that for any~$F'$ in the in-link, $F'\cup\gamma$ is also in the in-link. It is clear that $F'\cup\gamma$ is admissible, since it is contained in~$F$. If $\gamma\subseteq F'$ there is nothing to show, so we can assume rather that the image of~$\gamma$ in $\Gamma/F'$ is an arc, which necessarily connects base vertices and so is base-decreasing. Since~$F'$ is good we conclude that $F'\cup\gamma$ is base-decreasing, and so is also good.

 It remains only to show that $F'\cup\gamma\neq F$. We claim that for any $\emptyset\neq\delta\subseteq\gamma$, $F\setminus\delta$ is bad. This can phrased colloquially as: if removing~$\epsilon$ from~$F$ turns it bad, then removing any part of~$\gamma$ from~$F$ turns it bad. Since~$F'$ is good, this will then imply that $F'\cup\gamma\neq F$. Note that if $\epsilon\not\in\delta$ and $F\setminus\delta$ is good, the connected component of $(F\setminus\delta)\cap C$ containing~$\epsilon$ does not connect base vertices, so by the previous paragraphs $F\setminus\delta\in P_{\frac{1}{2}}$, i.e., $(F\setminus\delta)\setminus\{\epsilon\}$ is good. In particular if $F\setminus(\delta\cup\{\epsilon\})$ is bad then so is $F\setminus\delta$, so we can assume without loss of generality that $\epsilon\subseteq\delta$. Since $F\setminus\{\epsilon\}$ is arced we have $D(F\setminus\gamma)=i$, and so $D(F\setminus\delta)=i$. It is clear that
 $$\Delta m_i(\Gamma,F\setminus\delta)\ge\Delta m_i(\Gamma,F\setminus\{\epsilon\})=0,$$
 so to show $F\setminus\delta$ is bad, it suffices to assume it is arc-free and prove it is base-increasing. For $F\setminus\{\epsilon\}$ to be arced and $F\setminus\delta$ to be arc-free, there must exist an arc in $F\setminus\{\epsilon\}$ containing an edge of $\delta\setminus\{\epsilon\}$. In particular, the image of $\delta\setminus\{\epsilon\}$ in $\Gamma/(F\setminus\delta)$ is an arced forest consisting of distinguished edges, with an arc connecting base vertices. This must be base-decreasing, which tells us that
 $$\Delta m_i(\Gamma,F\setminus\delta)>\Delta m_i(\Gamma,F\setminus\{\epsilon\}),$$
 and we are done. The claim now follows, and so $F'\cup\gamma$ is in the in-link. In particular the in-link is contractible by \cite[Section~1.5]{quillen78}.
\end{proof}

\noindent\textbf{Decomposing $P$ using $\epsilon$:} In general if~$\epsilon$ is any admissible good edge, then we have
\begin{align*}
 P(\Gamma)=&P_1(\Gamma,\epsilon)\cup\st(\epsilon)\\
 \text{and }&P_1(\Gamma,\epsilon)\cap\st(\epsilon)=\lk(\epsilon)\text{,}
\end{align*}
where star and link here are taken in $P(\Gamma)$. The previous results provide tools to analyze $P_1(\Gamma,\epsilon)$, and the next lemma tells us something about $\lk(\epsilon)$.

\begin{lemma}[Links in the down-link]\label{forest_mod_edge}
 Let~$\epsilon$ be an optimal edge in~$\Gamma$ such that $\epsilon\in P(\Gamma)$, i.e.,~$\epsilon$ is good. Let~$F$ be an admissible forest properly containing~$\epsilon$. Then $F\in P(\Gamma)$ if and only if $F/\epsilon\in P(\Gamma/\epsilon)$. Moreover, $\lk(\epsilon)\cong P(\Gamma/\epsilon)$.
\end{lemma}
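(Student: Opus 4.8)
The plan is to realize both $\lk(\epsilon)$ and $P(\Gamma/\epsilon)$ as order complexes and to produce an explicit order-isomorphism between the underlying posets. Since $\{\epsilon\}$ is a minimal element of $P(\Gamma)$, the link $\lk(\epsilon)$ is the order complex of the subposet $P_{>\epsilon}(\Gamma)$ of good admissible forests properly containing $\epsilon$. The collapse map $F\mapsto F/\epsilon$ is an order-isomorphism from the poset of \emph{all} admissible forests of $\Gamma$ properly containing $\epsilon$ onto the poset of \emph{all} nonempty admissible forests of $\Gamma/\epsilon$, with inverse given by adjoining $\epsilon$ to a preimage; here one uses that $\Gamma/\epsilon$ is again a connected graph with $m$ disjoint distinguished cycles since $\{\epsilon\}$ is admissible ($\epsilon$ being good), and that admissibility is preserved in both directions, since $\Gamma/F=(\Gamma/\epsilon)/(F/\epsilon)$ and the constraint defining admissibility concerns only how the trees of the forest meet the distinguished cycles. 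Thus the whole lemma reduces to the assertion that, for an admissible $F$ properly containing $\epsilon$, the forest $F$ is \emph{good} in $\Gamma$ if and only if $F/\epsilon$ is \emph{good} in $\Gamma/\epsilon$; granting this, $F\mapsto F/\epsilon$ restricts to an isomorphism $P_{>\epsilon}(\Gamma)\cong P(\Gamma/\epsilon)$, hence a simplicial isomorphism $\lk(\epsilon)\cong P(\Gamma/\epsilon)$.

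For the comparison of goodness I would first record the geometric input that, because $\epsilon$ is maximally distant, collapsing $\epsilon$ leaves the level of every vertex at level $\le D(\epsilon)$ unchanged (apart from identifying the two endpoints of $\epsilon$): if some vertex $w$ had its distance to $p$ drop, the new shortest path would lift to a $\Gamma$-path from $p$ to $w$ running through $\epsilon$, which forces $d(p,w)\ge D(\epsilon)+2$. In particular $D(F/\epsilon)=D(F)=:i$, and the sets $\Lambda_j$, the numbers $m_j$, and the existence of an arc at level $i$ all transfer between $\Gamma$ and $\Gamma/\epsilon$ in a controlled way. From $\Gamma/F=(\Gamma/\epsilon)/(F/\epsilon)$ one then gets the bookkeeping identity $\Delta m_i(\Gamma/\epsilon,F/\epsilon)=\Delta m_i(\Gamma,F)-\Delta m_i(\Gamma,\{\epsilon\})$, and by Remark~\ref{edges_and_paths} the correction term $\Delta m_i(\Gamma,\{\epsilon\})$ is $0$ unless $\epsilon$ is horizontal, in which case, $\epsilon$ being good, it connects two base vertices and $\Delta m_i(\Gamma,\{\epsilon\})=-1$.

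Then one argues by cases on the type of $\epsilon$. If $D(\epsilon)>i$, or if $\epsilon$ is vertical, then $\Delta m_i(\Gamma,\{\epsilon\})=0$ and being arced at level $i$ is preserved by $F\mapsto F/\epsilon$, so $F$ is good if and only if $F/\epsilon$ is good, directly from Definition~\ref{def:good_forests}. If $\epsilon$ is horizontal with $D(\epsilon)=i$, then $F$ contains the arc $\epsilon$ and is therefore arced, so $F$ is good iff it is base-decreasing, iff $\Delta m_i(\Gamma,F)\le-1$, iff $\Delta m_i(\Gamma/\epsilon,F/\epsilon)\le 0$; the implication ``$F/\epsilon$ good $\Rightarrow F$ good'' follows at once from this, and the converse requires the extra verification that when $F$ is good with $\Delta m_i(\Gamma,F)=-1$ the forest $F/\epsilon$ is arc-free at level $i$. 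I expect this last verification to be the main obstacle: it asserts that the single base-vertex drop recorded by $F$ is exactly the one produced by collapsing $\epsilon$, and its proof uses not merely that $\epsilon$ is maximally distant but that it is \emph{optimal} --- an arc at level $i$ surviving in $F/\epsilon$ would pull back to an admissible edge that is either strictly more distant than $\epsilon$, or maximally distant with a strictly larger value of $\Delta m_{D(\epsilon)}$, contradicting optimality once the admissibility constraints on forests inside distinguished cycles are used to rule out degenerate chord configurations. With that in hand the two notions of good agree, and the lemma follows.
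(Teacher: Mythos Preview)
Your framework is the same as the paper's: identify $\lk(\epsilon)$ with the order complex of $P_{>\epsilon}(\Gamma)$, use the collapse bijection on admissible forests, and reduce to showing $F$ good in $\Gamma$ iff $F/\epsilon$ good in $\Gamma/\epsilon$ via the bookkeeping identity $\Delta m_i(\Gamma/\epsilon,F/\epsilon)=\Delta m_i(\Gamma,F)-\Delta m_i(\Gamma,\epsilon)$. Your treatment of the case $\Delta m_i(\Gamma,\epsilon)=0$ matches the paper.

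The difference is in the horizontal case $\Delta m_i(\Gamma,\epsilon)=-1$. You correctly isolate the only potential difficulty as the subcase $\Delta m_i(\Gamma,F)=-1$, where one would need $F/\epsilon$ to be arc-free, and you propose a somewhat speculative contradiction argument invoking optimality and ``degenerate chord configurations''. The paper sidesteps this entirely with a cleaner observation you are missing: since $D(\epsilon)=i=D(F)$ and $\epsilon$ is maximally distant with sum of endpoint levels $2i$, \emph{every} edge of $F$ has both endpoints at level $\ge i$ and sum $\le 2i$, hence is itself maximally distant and horizontal at level $i$. Optimality of $\epsilon$ (together with $\epsilon$ being good and horizontal) then forces every maximally distant edge to connect two base vertices. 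Thus $F$ is a forest all of whose edges join base vertices at level $i$, and since $F$ has at least two edges, $\Delta m_i(\Gamma,F)\le -2$. Consequently both $F$ and $F/\epsilon$ are automatically base-decreasing, and your ``main obstacle'' case $\Delta m_i(\Gamma,F)=-1$ simply does not occur. Your sketched workaround is therefore unnecessary, and as written it is not a proof: pulling back an arc of $F/\epsilon$ yields a path, not a single edge, and the appeal to optimality is not made precise.
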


\begin{proof}
 Let $i\defeq D(F)=D(F/\epsilon)$. Since~$\epsilon$ is good, $\Delta m_i(\Gamma,\epsilon)\in\{-1,0\}$. First suppose that $\Delta m_i(\Gamma,\epsilon)=0$, for example if $D(\epsilon)>i$. It is clear that~$F$ is arced if and only if~$F/\epsilon$ is arced. Also,
 $$\Delta m_i(\Gamma,F)=\Delta m_i(\Gamma/\epsilon,F/\epsilon)+\Delta m_i(\Gamma,\epsilon),$$
 so $\Delta m_i(\Gamma,F)=\Delta m_i(\Gamma/\epsilon,F/\epsilon)$. Hence,~$F$ is base-decreasing if and only if~$F/\epsilon$ is, and~$F$ is base-preserving and arc-free if and only if~$F/\epsilon$ is, which implies that~$F\in P(\Gamma)$ if and only if $F/\epsilon\in P(\Gamma/\epsilon)$.

 Next suppose $\Delta m_i(\Gamma,\epsilon)=-1$, so~$D(\epsilon)=i$. We claim that in fact~$F$ and~$F/\epsilon$ must both be base-decreasing, and hence good. We know that~$\epsilon$, and indeed every maximally distant edge, is horizontal and connects base vertices. In particular since $D(\epsilon)=i$, every edge of~$F$ must be maximally distant, and so connects base vertices. Since~$F$ has more than one edge, it is clear that $\Delta m_i(\Gamma,F)\le-2$, so~$F$ is base-decreasing. Also,
$$\Delta m_i(\Gamma/\epsilon,F/\epsilon)=\Delta m_i(\Gamma,F)-\Delta m_i(\Gamma,\epsilon)\le-2+1=-1$$
 so~$F/\epsilon$ is base-decreasing.

 Now consider the map
 $$f \colon \lk(\epsilon) \to P(\Gamma/\epsilon)$$
 sending~$F$ to $F/\epsilon$. This is well-defined by the previous paragraphs, and is clearly injective. We claim that~$f$ is bijective. Let~$\Phi\in P(\Gamma/\epsilon)$. There are two forests in~$\Gamma$ that map to~$\Phi$ under blowing down~$\epsilon$, one that contains~$\epsilon$ and one that does not. Let~$\Phi'$ be the one that does, so $\Phi'\in\lk(\epsilon)$ and $f(\Phi')=\Phi$. If~$\Phi$ was admissible then~$\Phi'$ is too. Also, if~$\Phi$ was good then so is~$\Phi'$, again by the previous paragraphs. So~$f$ is an isomorphism.
\end{proof}

Let~$V$ be the number of vertices in~$\Gamma$ and~$E_{ad}$ the number of admissible edges. The next two results are generalizations of Proposition~3.2 and Lemma~3.3 from \cite{zar_degree_thm}. Recall that~$c=m-m_0$ is the number of distinguished cycles not at~$p$.

\begin{proposition}[Homotopy type of the down-link]\label{dlk_conn}
 $P(\Gamma)$ is homotopy equivalent to a (possibly empty) wedge of spheres of dimension~$V-c-2$.
\end{proposition}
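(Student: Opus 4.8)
The plan is to induct on the number $E_{ad}$ of admissible edges of $\Gamma$, following the pattern of \cite[Proposition~3.2]{zar_degree_thm}. If $\Gamma$ has no good admissible edge then $P(\Gamma)$ is empty and there is nothing to prove, so assume $P(\Gamma)\neq\emptyset$ and that there is a good optimal edge $\epsilon$; by the remark following the definition of optimal edges, $\epsilon$ is then maximally distant and every maximally distant edge is good. I would use the decomposition
$$P(\Gamma)=P_1(\Gamma,\epsilon)\cup\st(\epsilon),\qquad P_1(\Gamma,\epsilon)\cap\st(\epsilon)=\lk(\epsilon),$$
together with the isomorphism $\lk(\epsilon)\cong P(\Gamma/\epsilon)$ of Lemma~\ref{forest_mod_edge}. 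Since $\st(\epsilon)$ is a cone on $\lk(\epsilon)$, this presents $P(\Gamma)$ as obtained from $P_1(\Gamma,\epsilon)$ by coning off the subcomplex $P(\Gamma/\epsilon)$, i.e.\ as the homotopy cofiber of the inclusion $P(\Gamma/\epsilon)=\lk(\epsilon)\hookrightarrow P_1(\Gamma,\epsilon)$. So everything reduces to understanding $P_1(\Gamma,\epsilon)$, which Proposition~\ref{P1_P0} replaces with the strong deletion $P_0(\Gamma,\epsilon)$.

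If $\epsilon$ is distinguished, $P_0(\Gamma,\epsilon)$ is contractible by Lemma~\ref{P0_dstgd_edge}, so coning off a subcomplex of a contractible space yields $P(\Gamma)\simeq\Sigma\,P(\Gamma/\epsilon)$; as blowing down a distinguished edge removes one vertex and does not change $c$, the inductive hypothesis gives that $P(\Gamma/\epsilon)$ is a wedge of $(V-c-3)$-spheres, so its suspension is a wedge of $(V-c-2)$-spheres. If $\epsilon$ is undistinguished it must be vertical (a good maximally distant horizontal edge would join two base vertices and hence be distinguished), and I would identify $P_0(\Gamma,\epsilon)$ with $P(\Gamma\setminus\epsilon)$, the analogous poset for the graph obtained by deleting $\epsilon$ — here it is essential that valence restrictions were dropped in this subsection, so $\Gamma\setminus\epsilon$ is still a legitimate ambient graph. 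Since $\Gamma\setminus\epsilon$ has the same $V$ and $c$ but one fewer admissible edge, induction gives that $P_1(\Gamma,\epsilon)\simeq P(\Gamma\setminus\epsilon)$ is a wedge of $(V-c-2)$-spheres while $\lk(\epsilon)=P(\Gamma/\epsilon)$ is a wedge of $(V-c-3)$-spheres; the Mayer--Vietoris sequence of the cofibration then concentrates $\tilde H_*(P(\Gamma))$ in the single degree $V-c-2$, and with simple connectivity (for $V-c-2\geq 2$, the finitely many remaining cases checked directly) this forces $P(\Gamma)$ to be a wedge of $(V-c-2)$-spheres. The leftover case, where $P(\Gamma)\neq\emptyset$ but no optimal edge is good — so every maximally distant edge is bad and, as in the classical situation, horizontal — would be handled as in \cite{zar_degree_thm}, either by producing a cone point of $P(\Gamma)$ or by deleting a good vertical edge to lower $E_{ad}$.

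I expect the undistinguished-edge step to be the main obstacle. One must check that admissibility and goodness are preserved verbatim when passing from $\Gamma$ to $\Gamma\setminus\epsilon$ — deleting an edge can in principle change distances to $p$, hence the levels $\Lambda_i$ and the set of base vertices — and one must separately treat the sub-cases where $\epsilon$ is separating or decisive, where $\Gamma\setminus\epsilon$ degenerates and $P_0(\Gamma,\epsilon)$ should instead be shown to be contractible, putting one back in the suspension situation. This bookkeeping, rather than any new conceptual ingredient, is where the real work lies; the clean input is all contained in Lemmas~\ref{P0_dstgd_edge} and~\ref{forest_mod_edge} and Proposition~\ref{P1_P0}, and this proposition is their inductive assembly.
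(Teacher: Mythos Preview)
Your overall strategy matches the paper's: induct on $E_{ad}$, choose an optimal maximally distant edge $\epsilon$, use the decomposition $P(\Gamma)=P_1(\Gamma,\epsilon)\cup\st(\epsilon)$ with $P_1\cap\st(\epsilon)=\lk(\epsilon)$, and feed in Proposition~\ref{P1_P0} and Lemmas~\ref{P0_dstgd_edge} and~\ref{forest_mod_edge}. The distinguished case and the non-separating undistinguished case go through essentially as you outline (and your worry about the Mayer--Vietoris/simple-connectivity step is harmless: the mapping cone of a wedge of $(k-1)$-spheres into a wedge of $k$-spheres is always a wedge of $k$-spheres, by a CW-dimension argument, with no low-dimensional exceptions).

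The genuine gap is in your ``leftover case''. The claim that ``no optimal edge is good'' forces every maximally distant edge to be bad and horizontal is false when $m>0$. The remark after the definition of optimal says that a good optimal edge exists iff \emph{all} maximally distant edges are good; its contrapositive only says that \emph{some} maximally distant edge is bad. More importantly, for $m>0$ the maximally distant edges can be vertical with the optimal one bad: this happens exactly when its top is a base vertex, giving $\Delta m_{D(\epsilon)}=1$. That phenomenon has no analogue in the $m=0$ setting, so appealing to \cite{zar_degree_thm} does not resolve it. The paper instead never insists that $\epsilon$ be good: it cases on distinguished versus undistinguished and separating versus non-separating. The new sub-case is $\epsilon$ undistinguished, separating, and bad; after stripping undistinguished loops one checks that $\epsilon$ must then be the stick of a lollipop $\ell$, and $P(\Gamma)=P_1(\Gamma,\epsilon)\simeq P_0(\Gamma,\epsilon)=P(\Gamma\setminus\ell)$. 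Removing $\ell$ decreases both $V$ and $c$ by one, so the target dimension $V-c-2$ is unchanged and induction applies. Note in particular that $P_0$ is \emph{not} contractible here, contrary to what you predict for the separating sub-case. (Conversely, your concern about separating $\epsilon$ in the \emph{good} undistinguished case is moot: a good vertical maximally distant edge has non-base top, and the only way a maximally distant edge can separate is as a lollipop stick, whose top is a base vertex.)

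A smaller point: the base case is $E_{ad}=0$, not ``no good admissible edge'', and one must verify $V-c-2=-1$ there (after removing undistinguished loops) rather than declare there is nothing to prove.
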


\begin{proof}
 The proof is similar to the proof of Proposition~2.2 in \cite{vogtmann90} and Proposition~3.2 in \cite{zar_degree_thm}. We induct on the number of admissible edges~$E_{ad}$. Since undistinguished loops do not affect~$P(\Gamma)$,~$V$ or~$c$, we may assume there are none. The base case is~$E_{ad}=0$, for which clearly~$P(\Gamma)$ is empty, i.e.,~$S^{-1}$. When~$m>0$, if there are no admissible edges then~$V=m$ and~$c=m-1$. If~$m=0$ and there are no admissible edges then~$V=1$ and~$c=0$. In both cases, $-1=V-c-2$, which finishes the base case.

 Now assume $E_{ad}>0$, so in particular there exists a maximally distant edge. Let~$\epsilon$ be an optimal (maximally distant) edge. First suppose that~$\epsilon$ is distinguished. By Lemma~\ref{P0_dstgd_edge} and Proposition~\ref{P1_P0}, $P_1(\Gamma,\epsilon)$ is contractible. If~$\epsilon$ is bad then $P(\Gamma)=P_1(\Gamma,\epsilon)$ and we are done, so assume~$\epsilon$ is good. Then $\lk(\epsilon)\cong P(\Gamma/\epsilon)$ by Lemma~\ref{forest_mod_edge}, and admissible blow-downs necessarily decrease~$E_{ad}$, so by induction $\lk(\epsilon)$ is $(V-c-3)$-spherical. Since
 \begin{align*}
  P(\Gamma)=&P_1(\Gamma,\epsilon)\cup\st(\epsilon)\\
 \text{and }&P_1(\Gamma,\epsilon)\cap\st(\epsilon)=\lk(\epsilon)\text{,}
 \end{align*}
 we conclude that $P(\Gamma)$ is $(V-c-2)$-spherical.

 Next suppose that~$\epsilon$ is not distinguished, and is not a separating edge. By the same argument as above, if~$\epsilon$ is good then $\lk(\epsilon)$ is $(V-c-3)$-spherical, so we just need to inspect~$P_1(\Gamma,\epsilon)$, which by Proposition~\ref{P1_P0} is homotopy equivalent to $P_0(\Gamma,\epsilon)$. Since~$\epsilon$ is not a separating edge, we can remove it from~$\Gamma$ and we still have a connected graph with~$m$ distinguished cycles and~$V$ vertices, and strictly fewer admissible edges. By induction then, $P(\Gamma\setminus\epsilon)$ is $(V-c-2)$-spherical (since~$c$ did not change either). Consider the map
 $$g \colon P(\Gamma\setminus\epsilon) \to P_0(\Gamma,\epsilon)$$
 induced by $\Gamma\setminus\epsilon\hookrightarrow\Gamma$. Adding~$\epsilon$ to the graph cannot affect whether a forest~$F$ in $\Gamma\setminus\epsilon$ is admissible or not. Also, since~$\epsilon$ is maximally distant,~$\epsilon$ cannot be decisive, so adding~$\epsilon$ to the graph does not change the levels~$\Lambda_i$. In particular adding~$\epsilon$ cannot affect whether a forest~$F$ in $\Gamma\setminus\epsilon$ is good or bad, so~$g$ is an isomorphism. We conclude that $P_0(\Gamma,\epsilon)$ is $(V-c-2)$-spherical, and hence so is~$P(\Gamma)$. Of course if~$\epsilon$ is bad then $P(\Gamma)=P_1(\Gamma,\epsilon)$, and again we get the result.

 Lastly suppose~$\epsilon$ is not distinguished, but is an (admissible) separating edge. If~$\epsilon$ is good then for any $F\in P(\Gamma)$ it is clear that $F\cup\epsilon$ is again an admissible good forest. In this case~$P(\Gamma)$ is contractible by \cite[Section~1.5]{quillen78}. Incidentally, this completely finishes the~$m=0$ case. If~$\epsilon$ is bad then its top must be a base vertex. Since~$\epsilon$ is maximally distant, and~$\Gamma$ has no undistinguished loops,~$\epsilon$ is the stick of a lollipop $\ell$. The graph $\Gamma\setminus\ell$ has $V-1$ vertices and~$c-1$ distinguished cycles not at~$p$, and has fewer admissible edges than~$\Gamma$. By induction then,
 $$P(\Gamma)=P_1(\Gamma,\epsilon)\simeq P_0(\Gamma,\epsilon)=P(\Gamma\setminus\ell)$$
 is $(V-1)-(c-1)-2=(V-c-2)$-spherical.
\end{proof}

\begin{lemma}[Decisive edges]\label{decisive_edge}
 If~$\Gamma$ has a non-base vertex with an admissible decisive edge then~$P(\Gamma)$ is contractible.
\end{lemma}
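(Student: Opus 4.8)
The plan is to prove that $P(\Gamma)$ is contractible by coning it toward a fixed good admissible forest determined by the decisive edge, via the standard poset argument of \cite[Section~1.5]{quillen78}: if $r\colon P(\Gamma)\to P(\Gamma)$ is a poset map with $F\le r(F)$ for all $F$, and $r(F)\ge F_0$ for one fixed $F_0\in P(\Gamma)$, then $P(\Gamma)$ is contractible.

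\emph{Setup and reduction.} Undistinguished loops do not affect $P(\Gamma)$, so assume there are none. Let $v$ be the non-base vertex and $\epsilon$ its admissible decisive edge; by definition $\epsilon$ is vertical, with top $v$ and bottom $w$, and we put $i\defeq D(\epsilon)=d(p,w)=d(p,v)-1$. A short check shows $\epsilon$ is good: a vertical edge is arc-free and not base-decreasing by Remark~\ref{edges_and_paths}, so by Lemma~\ref{which_forests_desc} it suffices to see that blowing $\epsilon$ down does not create a base vertex at level $i$; collapsing $\epsilon$ merges $v$ into $w$, and since no path of length $\le i$ can run through $\epsilon$ no level $\le i$ is lowered, while $v$ being non-base (automatic if $\epsilon$ is undistinguished, and hypothesised otherwise) prevents a new base vertex from appearing there. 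Hence $\{\epsilon\}\in P(\Gamma)$. If $\epsilon$ is distinguished, let $C$ be the distinguished cycle containing it and set $\phi\defeq C\setminus\{\epsilon\}$, a path whose level is $i_C$, hence good and admissible by Remark~\ref{edges_and_paths}; if $\epsilon$ is undistinguished, set $\phi\defeq\{\epsilon\}$.

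\emph{Main claim.} For every $F\in P(\Gamma)$ the forest $F\cup\phi$ is again good and admissible; granting this, $r\colon F\mapsto F\cup\phi$ is a poset map with $F\le r(F)\supseteq\phi$, so $P(\Gamma)$ is contractible. Three points must be checked for $F\cup\phi$. (i) It is a forest. In the undistinguished case this fails only if $v$ and $w$ already lie in a common tree of $F$; since $\epsilon$ is decisive at $v$, the $F$-path joining them leaves $v$ horizontally or downward, and one checks this is incompatible with $F$ being good. In the distinguished case it fails only if $\phi=C\setminus\{\epsilon\}$ is already contained in $F$, in which case $F\cup\phi=F$ and there is nothing to prove. (ii) It is admissible. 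The only new tree is the one through $\epsilon$ (respectively through $\phi$), and one must see that it still meets at most one distinguished cycle, in a connected edge path. (iii) It is good. Blowing $\phi$ down is non-base-increasing and cannot turn an arc-free forest arced, so goodness is preserved by Lemma~\ref{which_forests_desc}; this is the bookkeeping already carried out in Case~1 of the proof of Proposition~\ref{P1_P0} (and, for $\phi=C\setminus\{\epsilon\}$, in the proof of Lemma~\ref{P0_dstgd_edge}).

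\emph{Main obstacle.} The crux is step (ii): showing that enlarging a good admissible forest by the decisive edge, or by $C\setminus\{\epsilon\}$ in the distinguished case, never produces a tree meeting two distinct distinguished cycles, or meeting one of them in a disconnected set. This is where the two hypotheses get used together --- $v$ being non-base rules out $\epsilon$ (or $\phi$) anchoring a second distinguished cycle at its upper end, and $\epsilon$ being decisive forces every good forest to treat $v$ as an interior vertex, so that the two pieces of $F$ being joined along $v$ were already aimed at the same cycle (if at any). Making this case analysis airtight, while simultaneously tracking admissibility, the two ways a forest can be good, and the distinguished/undistinguished dichotomy for $\epsilon$, is the real work; once the claim is in hand, contractibility is immediate from \cite[Section~1.5]{quillen78}.
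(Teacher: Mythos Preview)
Your coning approach fails at step~(i): $F\cup\phi$ need not be a forest, and the claimed incompatibility with goodness does not hold. Take $m=0$ and let $\Gamma$ have vertices $p,w,x,v$ with (undistinguished) edges $pw$, $wv$, $wx$, $xv$; here $w\in\Lambda_1$ and $v,x\in\Lambda_2$. The edge $\epsilon=wv$ is decisive at the non-base vertex $v$, and the forest $F=\{wx,xv\}$ is good: it has $D(F)=1$, is arc-free since $|\Lambda_1|=1$, and is trivially base-preserving. The $F$-path from $v$ to $w$ leaves $v$ horizontally toward $x$ and then descends to $w$, exactly the shape you say ``one checks'' cannot occur; but it does, and $F\cup\{\epsilon\}$ is the triangle $wxv$, not a forest. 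So $\epsilon$ is not a cone point of $P(\Gamma)$ (the actual cone point in this example is the separating edge $pw$). In the distinguished case the problem is even more direct: whenever $\epsilon\in F$ the set $F\cup(C\setminus\{\epsilon\})$ contains all of $C$; you only ruled out $\phi\subseteq F$, and the coning from Lemma~\ref{P0_dstgd_edge} that you invoke is explicitly restricted to $P_0$, i.e.\ to forests \emph{not} containing $\epsilon$. There is a further gap in your check that an undistinguished $\epsilon$ is good: if $v$ lies on a distinguished cycle $C'$ with $i_{C'}=D(\epsilon)$ and $w$ is not already a base vertex, collapsing $\epsilon$ drags $v$ down to level $D(\epsilon)$ and makes the merged vertex a new base vertex of $C'$, so $\epsilon$ is base-increasing and hence bad.

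The paper does not attempt a global cone. It reruns the induction on $E_{ad}$ from Proposition~\ref{dlk_conn}: pick an optimal maximally distant edge $\epsilon'$ (generally unrelated to the decisive edge), handle $P_1(\Gamma,\epsilon')$ via Proposition~\ref{P1_P0} together with Lemma~\ref{P0_dstgd_edge} or the identification $P_0(\Gamma,\epsilon')\cong P(\Gamma\setminus\epsilon')$, and handle $\lk(\epsilon')\cong P(\Gamma/\epsilon')$ via Lemma~\ref{forest_mod_edge}. The point is that a decisive edge at a non-base vertex survives in $\Gamma/\epsilon'$, in $\Gamma\setminus\epsilon'$, or in $\Gamma\setminus\ell$, so the inductive hypothesis gives contractibility of each piece; the decisive-edge hypothesis is used only to propagate through this induction, not to supply a cone point for $P(\Gamma)$ itself.
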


\begin{proof}
 The proof is essentially the same as the previous lemma. Induct on~$E_{ad}$. In the base case, there are no admissible edges, much less admissible decisive edges, so the claim is vacuously true. Now assume~$E_{ad}>0$. Let~$\epsilon$ be an optimal maximally distant edge, so~$P_1(\Gamma,\epsilon)$ and~$P_0(\Gamma,\epsilon)$ are homotopy equivalent. If~$\epsilon$ is a separating edge, and good, then~$P(\Gamma)$ is already contractible with cone point~$\epsilon$. If~$\epsilon$ is a separating edge, and bad, then its top is a base vertex. The only way a maximally distant edge can be decisive is if it is separating, and so we can assume there is a decisive edge~$\eta\neq\epsilon$ with top a non-base vertex.

 First suppose that~$\epsilon$ is distinguished. Then~$P_1(\Gamma,\epsilon)$ is contractible, so if~$\epsilon$ is bad we are done. If~$\epsilon$ is good, we still have that $\lk(\epsilon)\cong P(\Gamma/\epsilon)$ as in the previous proof. By Lemma~\ref{which_forests_desc},~$\epsilon$ is either vertical, or is horizontal and connects base vertices. In either case,~$\eta$ maps to a decisive edge in~$\Gamma/\epsilon$, with a non-base vertex for a top, and so~$\lk(\epsilon)$ is contractible by induction. Therefore~$P(\Gamma)$ is contractible. Now suppose~$\epsilon$ is not distinguished. Again,~$\lk(\epsilon)$ is contractible if~$\epsilon$ is good, so we just need to inspect $P_0(\Gamma,\epsilon)$. If~$\epsilon$ is not a separating edge we may remove it as in the previous proof and get that $P_0(\Gamma,\epsilon)\cong P(\Gamma\setminus\epsilon)$ is contractible by induction. The only case remaining is when~$\epsilon$ is a separating edge whose top is a distinguished vertex, so it is the stick of a lollipop $\ell$. Obviously $\eta$ is still a decisive edge in $\Gamma\setminus\ell$, so $P(\Gamma)=P_0(\Gamma,\epsilon)=P(\Gamma\setminus\ell)$ is contractible by induction.
\end{proof}

\subsection{Connectivity of the descending up-link}\label{sec:up_conn}

Now consider the up-link of~$\Gamma$. We return to only considering graphs coming from~$\Delta K_n^m$, so all vertices $v\neq p$ are at least trivalent and~$p$ is at least bivalent. Let~$\BU(v)$ be the poset of all blow-ups at the vertex~$v$, and let~$\DBU(v)$ be the poset of descending blow-ups at~$v$. We will use the combinatorial framework for graph blow-ups described in \cite{culler86} and \cite{vogtmann90}, so we think of~$\BU(v)$ as the poset of \emph{compatible partitions} of the set of incident half-edges. Let $[n]\defeq \{1,\dots n\}$, and consider partitions of~$[n]$ into two blocks. Denote such a partition by~$\alpha=\{a,\bar{a}\}$, where~$1\in a$. Distinct partitions $\{a,\bar{a}\}$ and $\{b,\bar{b}\}$ are called \emph{compatible} if either~$a\subset b$ or~$b\subset a$. Let~$\Sigma(v)$ be the simplicial complex of partitions $\alpha=\{a,\bar{a}\}$ of $[val(v)]$ into blocks~$a$ and~$\bar{a}$ such that~$a$ and~$\bar{a}$ each have at least two elements. (If~$v$ is the basepoint~$p$, then one block may have size one, since~$p$ is allowed to be bivalent.) That is, the vertices of~$\Sigma(v)$ are partitions, and a~$j$-simplex is given by a collection of~$j+1$ distinct, pairwise compatible partitions. Also let~$\Sid(v)$ be the subcomplex of~$\Sigma(v)$ spanned by descending partitions, i.e., partitions that correspond to descending single-edge blow-ups.

For $v\neq p$, the geometric realization $|\!\BU(v)|$ of~$\BU(v)$ is isomorphic to the barycentric subdivision of~$\Sigma(v)$. The idea is that a partition describes an \emph{ideal edge}, i.e., an edge blow-up at a vertex, and the blocks~$a$ and~$\bar{a}$ indicate which half-edges attach to which endpoints of the new edge. See \cite{culler86} and \cite{vogtmann90} for more details. It is also clear that the geometric realization $|\!\DBU(v)|$ contains the barycentric subdivision of~$\Sid(v)$ as a subcomplex, and that any simplex in $|\!\DBU(v)|$ has at least one vertex in~$\Sid(v)$. Hence there is a map $|\!\DBU(v)|\to|\!\DBU(v)|$ sending each simplex to its face spanned by vertices in~$\Sid(v)$, which induces a deformation retraction from $|\!\DBU(v)|$ to~$\Sid(v)$.

The next lemma relates the up-link of~$\Gamma$ to these complexes~$\Sid(v)$. The proof is very similar to the proof of \cite[Proposition~4.5]{zar_degree_thm}.

\begin{lemma}[Local to global]\label{global_blowups}
 Let $\displaystyle\DBU(\Gamma)\defeq \ast_{v\in\Gamma}\DBU(v)$. Then $|\!\DBU(\Gamma)|$ is homotopy equivalent to the up-link of~$\Gamma$.
\end{lemma}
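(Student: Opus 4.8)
The statement says the up-link of $\Gamma$ is homotopy equivalent to $|\!\DBU(\Gamma)| = |\ast_{v\in\Gamma}\DBU(v)|$. The natural approach is to realize both sides as geometric realizations of posets and exhibit a poset map between them that satisfies the hypotheses of Quillen's fiber lemma (or to show it is a homotopy equivalence by an explicit deformation retraction, in the spirit of \cite[Proposition~4.5]{zar_degree_thm}). A vertex of the up-link is a single descending blow-up of $\Gamma$; combinatorially this is a choice of a vertex $v$ of $\Gamma$ together with a descending partition of the half-edges at $v$, i.e.\ a vertex of $\Sid(v)$. A simplex of the up-link is a compatible collection of such blow-ups. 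The key observation is that blow-ups at distinct vertices of $\Gamma$ are always mutually compatible (they involve disjoint sets of half-edges), so the simplicial structure on the up-link factors through the join: a collection of ideal edges is compatible if and only if, for each vertex $v$, the ideal edges located at $v$ are pairwise compatible as partitions of the half-edges at $v$. This is precisely the combinatorics encoded by the join $\ast_{v}\DBU(v)$.

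**Key steps.** First I would recall, from \cite{culler86,vogtmann90} and the two paragraphs preceding the lemma, the identification of $|\!\BU(v)|$ with the barycentric subdivision of $\Sigma(v)$ and of $|\!\DBU(v)|$ with a complex deformation retracting onto $\Sid(v)$, and similarly the identification of the full blow-up poset of $\Gamma$ (Culler--Vogtmann) with $\ast_v\!\BU(v)$ after barycentric subdivision. Second, I would observe that the up-link, as a subcomplex of $\dlk(\Gamma)$, consists exactly of those simplices of the full blow-up poset all of whose vertices are descending blow-ups; since being descending is a condition on a single ideal edge (a single partition at a single vertex), this subcomplex is $\ast_v\!\DBU(v) = \DBU(\Gamma)$ on the nose, up to the standard subdivision identifications. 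The only genuinely substantive point is to verify that a collection of descending ideal edges is simultaneously realizable as a blow-up of $\Gamma$ precisely when the ones sharing a common vertex of $\Gamma$ are pairwise compatible as partitions --- i.e.\ that there is no global obstruction beyond the local ones --- and this is exactly the content of the Culler--Vogtmann combinatorial description of blow-ups; I would cite that and note that admissibility (viability is preserved, since any blow-up of a viable graph is viable, as remarked in Section~\ref{sec:morse}) imposes no further constraint here. Finally I would assemble these identifications into the claimed homotopy equivalence, using the deformation retraction $|\!\DBU(v)| \to \Sid(v)$ on each join factor (a join of homotopy equivalences is a homotopy equivalence) to pass, if desired, from $\DBU(\Gamma)$ to $\ast_v\Sid(v)$ along the way.

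**Main obstacle.** The one place requiring care is checking that ``descending'' truly is a local, single-edge property in a way compatible with the join decomposition: a priori one might worry that a multi-edge blow-up could be descending even though some constituent single-edge blow-up is not, or vice versa. Here I would invoke the structure from Section~\ref{sec:morse}: a blow-up at level $i$ cannot decrease $d_i$, and the essential term analysis shows that whether a blow-up is descending is governed by its behavior on $m_i$ and then $n_i$ or $d_i$ at $i = D(F)$; combined with the fact that up-link vertices and down-link vertices join together (so within the up-link every blow-up is genuinely a blow-up, never mixed with a blow-down), one checks that a compatible collection of ideal edges is descending as a whole iff each is descending individually. This bookkeeping --- essentially transporting the argument of \cite[Proposition~4.5]{zar_degree_thm} to the present setting with distinguished cycles and the refined height function $h$ --- is the crux; once it is in place the topological conclusion is formal.
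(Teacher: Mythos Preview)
Your proposal contains a genuine gap at precisely the point you flag as the main obstacle. You assert that ``a compatible collection of ideal edges is descending as a whole iff each is descending individually,'' but only one direction of this biconditional holds. Consider vertices $w$ at level $i$ and $v$ at level $j>i$, with $f_w$ a descending single-edge blow-up at $w$ and $f_v$ an \emph{ascending} single-edge blow-up at $v$. The combined blow-up has its essential term in $h_i$, governed entirely by $f_w$ (since $f_v$ does not affect $h_{i'}$ for $i'<j$), and is therefore descending --- yet $f_v$ alone is ascending. Thus the up-link, viewed as the poset $U$ of globally descending blow-ups, strictly contains $\DBU(\Gamma)$ in general, and your claimed ``on the nose'' identification fails. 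Relatedly, your opening description of a vertex of the up-link as ``a choice of a vertex $v$ of $\Gamma$ together with a descending partition of the half-edges at $v$'' is already inaccurate: a vertex of the up-link is an arbitrary descending blow-up of $\Gamma$, possibly involving many ideal edges at many vertices simultaneously.

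The paper circumvents this by taking the retraction route you allude to in your plan but then set aside. Writing $\underline{P}$ for $P\sqcup\{\perp\}$ with a formal minimum adjoined, so that $P\ast Q\simeq\underline{P}\times\underline{Q}\setminus\{(\perp,\perp)\}$, the up-link is identified with the subposet $U\subseteq\prod_v\underline{\BU}(v)\setminus\{(\perp)_v\}$ of tuples whose associated global blow-up is descending. One then defines a poset map $r\colon U\to U$ by replacing each non-descending component $f_v$ with $\perp$. The fact needed for $r$ to be well defined is much weaker than your biconditional: if $f$ is globally descending then \emph{at least one} component $f_v$ is descending, so $r(f)$ is not the all-$\perp$ tuple. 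Since $r(f)\le f$ for all $f$ and $r$ is the identity on its image $\DBU(\Gamma)$, Quillen's criterion \cite[1.3]{quillen78} yields the homotopy equivalence. Your outline becomes correct once you replace the false ``iff'' with this retraction argument.
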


\begin{proof}
 For a poset~$P$, let $\underline{P}$ be $P\sqcup\{\perp\}$, where $\perp$ is a formal minimal element. Then we have that $P\ast Q\simeq \underline{P}\times\underline{Q} - \{(\perp,\perp)\}$. Let
 $$U\defeq \{f\in\prod_v\underline{\BU}(v)-\{(\perp)_v\}\mid f\textnormal{ is descending}\},$$
 so the geometric realization $|U|$ is the up-link. Define a poset map $r:U\to U$ via
 \begin{align*}
  (f_v)_v\mapsto\left(\left\{
  \begin{matrix}
   f_v & \textnormal{ for } & f_v\in\DBU(v)\\
   \perp & \textnormal{ for } & f_v\not\in\DBU(v)
  \end{matrix}
  \right.\right)_v
 \end{align*}
 where $f_v$ is a blow-up at~$v$ in the tuple~$f$. This map is well defined since if~$f$ is descending then $f_v$ must be descending for some~$v$. It is easy to see that $r$ is the identity when restricted to $\DBU(\Gamma)$. Also, $r(f)\le f$ for all $f\in U$, and so by \cite[1.3]{quillen78} this induces a homotopy equivalence between $|U|$ and $|\!\DBU(\Gamma)|$.
\end{proof}

In particular the up-link is homotopy equivalent to $\ast_{v\in\Gamma}\Sid(v)$, so we can analyze the up-link by looking at the complexes~$\Sid(v)$. In light of Lemma~\ref{decisive_edge}, one important situation is when~$v$ is a non-base vertex with no decisive edges.

\begin{lemma}[No decisive edges, locally]\label{local_no_udes}
 Suppose~$v$ is a non-base vertex with no decisive edge. Then $\Sid(v)\simeq\bigvee S^{val(v)-4}$.
\end{lemma}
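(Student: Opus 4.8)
The goal is to identify the homotopy type of $\Sid(v)$ when $v\neq p$ is a non-base vertex with no decisive edge. Set $k\defeq val(v)$. The ambient complex $\Sigma(v)$ is the complex of partitions of $[k]$ into two blocks each of size at least $2$; by the work of \cite{vogtmann90} (or \cite{hatcher98}) this complex is known to be homotopy equivalent to a wedge of spheres of dimension $k-4$ (it is the ``complex of ideal edges'' at a $k$-valent vertex, sometimes denoted the \emph{sphere complex} or analyzed via the associated poset of forests). So the plan is to show that $\Sid(v)$ is in fact \emph{all} of $\Sigma(v)$, i.e.\ that \emph{every} single-edge blow-up at $v$ is descending, and then quote the known connectivity of $\Sigma(v)$.

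First I would recall, via Lemma~\ref{which_forests_desc} and Remark~\ref{edges_and_paths}, exactly which blow-ups at $v$ are descending. A blow-up at $v$ creates a new edge $\epsilon$; the reverse move is the blow-down of $\epsilon$, and $\epsilon$ is descending precisely when it is a \emph{good} edge, i.e.\ (by Remark~\ref{edges_and_paths}) when $\epsilon$ is vertical, or $\epsilon$ is distinguished and horizontal connecting two base vertices. Now the key geometric observation: since $v$ is a non-base vertex with \emph{no} decisive edge, $v$ has at least two incident vertical edges each having $v$ as its top, i.e.\ at least two half-edges at $v$ point ``downward'' toward $p$ along minimal paths. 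I would then argue that any partition $\alpha=\{a,\bar a\}$ of the $k$ half-edges at $v$ yields an ideal edge $\epsilon$ that is vertical: one of the two blocks, say the one realized as the endpoint of $\epsilon$ nearer to $p$, inherits at least one of these downward half-edges (and here one must check that we can always choose the new edge's orientation so that the block containing a genuinely-downward half-edge is the bottom endpoint). The point is that because there are $\ge 2$ downward half-edges and no single one is forced (no decisive edge), the two blocks cannot both ``lose'' the property of pointing toward $p$; hence after the blow-up, $v$ is split into two vertices both of which are non-base and the new edge $\epsilon$ strictly separates their distances to $p$, so $\epsilon$ is vertical, hence good, hence descending. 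Therefore $\Sid(v)=\Sigma(v)$.

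The second half is then immediate: quote the known fact that $\Sigma(v)$, the complex of partitions of a $k$-element set into two blocks of size $\ge 2$, is homotopy equivalent to $\bigvee S^{k-4}$ — this is standard from \cite{vogtmann90}, \cite{hatcher98}, or can be derived from Proposition~\ref{dlk_conn} applied to a suitable small graph realizing $\Sigma(v)$ as its down-link. Combined with $k=val(v)$ this gives $\Sid(v)\simeq\bigvee S^{val(v)-4}$, as claimed.

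**Main obstacle.** The delicate point is the ``no decisive edge'' bookkeeping on orientations: one must rule out that some ideal edge $\epsilon$ at $v$ comes out \emph{horizontal} (which would make it potentially bad, or base-preserving-and-arced, hence not descending). Concretely, the worry is a partition where the block destined to sit closer to $p$ happens to contain only half-edges that are tops-of-vertical-edges pointing \emph{away} from $p$ or horizontal half-edges; I need the hypothesis ``$\ge 2$ downward half-edges, none decisive'' to force that, whichever way I orient the new edge, the bottom vertex genuinely has strictly smaller distance to $p$ than the top, so $\epsilon$ is vertical. I expect this to follow by a short case analysis on how the downward half-edges distribute between the two blocks (if they land in different blocks, orient so either is the bottom; if they all land in one block, that block must be the bottom and the new edge is vertical; the ``no decisive edge'' hypothesis guarantees the downward half-edges number at least two so this latter bad-looking case is still fine). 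Carefully writing this is the only real work; everything else is quotation of prior results.
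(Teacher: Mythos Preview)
Your proposal contains a genuine error: you have reversed the correspondence between ``descending blow-up'' and ``good edge.'' A blow-up at~$v$ is \emph{descending} when the blown-up graph has \emph{lower} height than~$\Gamma$; equivalently, blowing the new edge~$\epsilon$ back down must be \emph{ascending}, i.e.\ $\epsilon$ must be a \emph{bad} edge in the blown-up graph, not a good one. So your sentence ``$\epsilon$ is descending precisely when it is a good edge'' is exactly backwards, and this propagates through the whole argument.

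Concretely, write $i=d(p,v)$ and call a half-edge at~$v$ \emph{downward} if it belongs to a vertical edge with top~$v$. If a partition $\{a,\bar a\}$ puts at least one downward half-edge in each block, then after blowing up both new vertices lie in $\Lambda_i$, the new edge~$\epsilon$ is \emph{horizontal}, $n_i$ strictly decreases, and the blow-up is descending. If instead all downward half-edges lie in a single block~$a$, then $v_a\in\Lambda_i$ while $v_{\bar a}\in\Lambda_{i+1}$, the new edge~$\epsilon$ is \emph{vertical}, $m_i$ and $n_i$ are unchanged, $d_i$ strictly increases, and the blow-up is \emph{ascending}. Thus $\Sid(v)$ is \emph{not} all of $\Sigma(v)$: it is the proper subcomplex of partitions that separate the set of downward half-edges. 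Your ``main obstacle'' paragraph worries about exactly the wrong case --- the horizontal outcome is the descending one, and the vertical outcome (all downward half-edges in one block) is the ascending one you must \emph{exclude}, not rescue.

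The paper's proof proceeds from this correct characterization: since $v$ is non-base and has no decisive edge, there are at least two downward half-edges, and $\Sid(v)$ is the complex of partitions of $[val(v)]$ separating this designated subset of size $\ge 2$. This is precisely the complex $\SBU(v)$ analyzed in \cite{zar_degree_thm}, and Lemma~4.1 and Proposition~4.3 there give $\SBU(v)\simeq\bigvee S^{val(v)-4}$. So the conclusion you want does hold, but it requires identifying $\Sid(v)$ with this ``separating'' subcomplex and invoking its (nontrivial) known homotopy type, rather than collapsing the problem to $\Sigma(v)$.
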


\begin{proof}
 We know that among the half-edges at~$v$, at least two correspond to vertical edges with top~$v$. Since~$v$ is a non-base vertex, a blow-up at~$v$ is descending if and only if it separates some of these half-edges with top~$v$. (Here the essential term will be $n_{d(p,v)}$.) Thus~$\Sid(v)$ is isomorphic to the complex denoted $\SBU(v)$ in \cite{zar_degree_thm}, and the result is immediate from Lemma~4.1 and Proposition~4.3 in \cite{zar_degree_thm}.
\end{proof}

Next we describe one very important case for which the up-link, and hence~$\dlk(\Gamma)$ is contractible. If a vertex $v\neq p$ has valency 3, or if $v=p$ and $val(v)=2$, we say~$v$ has \emph{minimal valency}. Otherwise we naturally say it has \emph{non-minimal valency}.

\begin{lemma}[Contractible case]\label{fat_base_vtx}
 If~$\Gamma$ has a base vertex with non-minimal valency, then the up-link is contractible, and so~$\dlk(\Gamma)$ is contractible.
\end{lemma}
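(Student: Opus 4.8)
The plan is to reduce to a single join factor of the up-link and exhibit a cone point there. By Lemma~\ref{global_blowups} the up-link of~$\Gamma$ is homotopy equivalent to $|\!\DBU(\Gamma)|=|\ast_{w\in\Gamma}\DBU(w)|$, and by the deformation retraction $|\!\DBU(w)|\to\Sid(w)$ noted after that lemma this is in turn homotopy equivalent to $\ast_{w\in\Gamma}\Sid(w)$. A simplicial join having a contractible factor is contractible (it deformation retracts onto a cone on the join of the remaining factors), and $\dlk(\Gamma)$ is the join of the up-link and the down-link; so it suffices to show that $\Sid(v)$ is contractible for the base vertex~$v$ of non-minimal valency supplied by the hypothesis. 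Since $\Sigma(v)$, and hence its full subcomplex $\Sid(v)$, is a flag complex, it is enough to produce a vertex of $\Sid(v)$ that is compatible with every vertex of $\Sid(v)$; such a vertex is a cone point.

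Let~$C$ be the distinguished cycle through~$v$, set $i\defeq i_C=d(p,v)$, and let $\delta_1,\delta_2$ be the two half-edges at~$v$ lying on~$C$. Because $v$ is a base vertex of~$C$, its two $C$-neighbors lie at level~$\ge i$, so neither $\delta_1$ nor~$\delta_2$ points downward. The candidate cone point is the partition $\alpha_0$ of the half-edges at~$v$ with blocks $\{\delta_1,\delta_2\}$ and~$E$, where~$E$ is the set of the remaining $val(v)-2$ half-edges; non-minimal valency makes $|E|\ge2$ (or $|E|\ge1$ when $v=p$, which is admissible since the basepoint may be bivalent), so $\alpha_0\in\Sigma(v)$. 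First I would verify that $\alpha_0$ is descending. Blowing it up replaces~$v$ by a vertex~$v_1$ carrying $\delta_1,\delta_2$ and a vertex~$v_2$ carrying~$E$, joined by a new undistinguished edge; since $\delta_1,\delta_2$ stay together this edge is not on~$C$, and $C$ runs only through~$v_1$. The vertex~$v_2$ inherits all downward half-edges of~$v$, so it stays at level~$i$, whereas every half-edge at~$v_1$ points to level~$\ge i$, so~$v_1$ moves to level~$i+1$; moreover a blow-up at a level-$i$ vertex leaves all distances at level~$\le i$ unchanged, so within $\Lambda_i$ the only change is that~$v$ is replaced by~$v_2$. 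As $v_2$ lies on no distinguished cycle and~$v_1$ lies above the base height of~$C$, this makes $m_i$ drop by exactly one, and Lemma~\ref{which_forests_desc}, applied to the blow-down collapsing the new edge (which sits at level~$i$), shows that $\alpha_0$ is descending. (For $v=p$ the same computation instead shows that~$C$ becomes a cycle not through~$p$, so~$c$ increases and $m_0=m-c$ drops.)

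Next I would check that every vertex of $\Sid(v)$ is compatible with~$\alpha_0$. Since $\{\delta_1,\delta_2\}$ is a block of~$\alpha_0$, a partition $\beta\neq\alpha_0$ is incompatible with~$\alpha_0$ exactly when it places $\delta_1$ and~$\delta_2$ on opposite sides, so it is enough to see no such~$\beta$ is descending. If $\beta$ separates $\delta_1,\delta_2$, then the new edge it creates lies on~$C$, so both resulting vertices lie on~$C$; whichever of them inherits a downward half-edge stays at level~$i$, so~$C$ retains a level-$i$ vertex and $m_i$ cannot decrease. If both resulting vertices stay at level~$i$, then~$C$ acquires a new level-$i$ vertex and $m_i$ increases; otherwise the other one moves to level~$i+1$, $m_i$ is unchanged, and the new edge, whose endpoints lie at levels~$i$ and~$i+1$, is arc-free, so Lemma~\ref{which_forests_desc} makes the blow-down collapsing it descending, i.e.\ makes~$\beta$ ascending. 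Either way $\beta\notin\Sid(v)$. Combined with the previous paragraph, $\alpha_0$ is a cone point of $\Sid(v)$, so $\Sid(v)$, and therefore the up-link and $\dlk(\Gamma)$, is contractible.

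The hard part will be the level-and-$m_i$ bookkeeping after a blow-up at~$v$: one must confirm that such a blow-up disturbs no distance at level~$\le i$, that the vertex carrying~$E$ never accidentally becomes distinguished (hence a base vertex), and that when $\delta_1,\delta_2$ are separated every way of distributing the downward half-edges of~$v$ really yields an ascending blow-up. These are precisely the spots where the two hypotheses are used: being a base vertex forces $\delta_1,\delta_2$ to be non-downward, so that~$v_1$ rises one level, and non-minimal valency makes $|E|$ large enough (two, or one at the basepoint) for~$\alpha_0$ to be a genuine partition in $\Sigma(v)$.
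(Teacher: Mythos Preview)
Your proof is correct and follows essentially the same approach as the paper: both reduce to $\Sid(v)$ via Lemma~\ref{global_blowups} and exhibit the partition $\alpha_0$ that separates the two distinguished half-edges $\delta_1,\delta_2$ from the rest as a cone point, verifying that $\alpha_0$ is descending (essential term $m_{d(p,v)}$) while every partition separating $\delta_1$ from $\delta_2$ is ascending. Your write-up is simply the detailed, carefully justified version of the paper's terse argument; the only cosmetic slip is that for $v=p$ a singleton block $\{\delta_i\}$ separates $\delta_1,\delta_2$ yet is compatible with $\alpha_0$, so ``exactly when'' should read ``only if'', but since only that direction is used the argument is unaffected.
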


\begin{proof}
 Let~$v$ be a base vertex with non-minimal valency. By Lemma~\ref{global_blowups} it suffices to show that~$\Sid(v)$ is contractible. Label the distinguished half-edges at~$v$ by $c_1$ and $c_2$, and label the undistinguished half-edges by $b_1,\dots,b_q$. By hypothesis $q>1$, unless $v=p$ in which case $q>0$. Let $\alpha_0$ be the ideal edge at~$v$ that separates $c_1,c_2$ from all the other half-edges. See Figure~\ref{fig:fat_base_vtx} for an example. This is clearly a descending blow-up, with essential term~$m_{d(p,v)}$. Also, any partition of $\{c_1,c_2,b_1,\dots,b_q\}$ that separates $c_1$ and $c_2$ is ascending, so indeed~$\Sid(v)$ is contractible with cone point $\alpha_0$.
\end{proof}

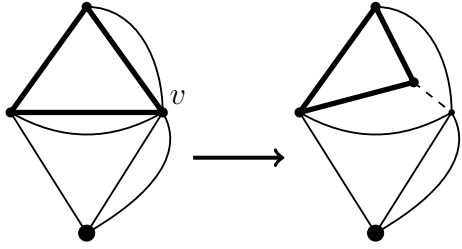
\begin{figure}[htb]
    \centering
    \begin{tikzpicture}%``fat'' base vertex blow-up
  \draw[line width=0.7pt]
   (-1,1.6) -- (0,0) -- (1,1.6)
   (0,0) to [out=30, in=-60,looseness=1] (1,1.6)
   (-1,1.6) to [out=-30, in=210,looseness=1] (1,1.6)
   (0,3) to [out=0, in=90,looseness=1] (1,1.6);
  \draw[line width=2pt]
   (-1,1.6) -- (0,3) -- (1,1.6) -- (-1,1.6);
  \filldraw
  (0,0) circle (3pt)
  (-1,1.6) circle (1.7pt)
  (1,1.6) circle (1.7pt)
  (0,3) circle (1.7pt);
  \node at (1.2,1.8) {$v$};
  \draw[line width=1.5pt, ->]
   (1.4,1) -> (2.6,1);

  \begin{scope}[xshift=3.8cm]
  \draw[line width=0.7pt]
   (-1,1.6) -- (0,0) -- (1,1.6)
   (0,0) to [out=30, in=-60,looseness=1] (1,1.6)
   (-1,1.6) to [out=-30, in=210,looseness=1] (1,1.6)
   (0,3) to [out=0, in=90,looseness=1] (1,1.6);
  \draw[line width=2pt]
   (-1,1.6) -- (0,3) -- (0.5,2) -- (-1,1.6);
  \draw[line width=0.7pt, dashed]
   (0.5,2) -- (1,1.6);
  \filldraw
  (0,0) circle (3pt)
  (-1,1.6) circle (1.7pt)
  (1,1.6) circle (1pt)
  (0.5,2) circle (1.7pt)
  (0,3) circle (1.7pt);
  \end{scope}

\end{tikzpicture}
    \caption{The blow-up at~$v$ given by $\alpha_0$. Here $m_1$ goes from $2$ to~$1$.}
    \label{fig:fat_base_vtx}
\end{figure}

We may now assume every base vertex has minimal valency, and so~$\Sid(v)$ is empty for all base vertices~$v$. Let~$V$ be the number of vertices of~$\Gamma$, and recall that here $d_0=d_0(\Gamma)$ is the degree of~$\Gamma$, i.e., $d_0=2n+2m-val(p)$.

\begin{lemma}[No decisive edges, globally]\label{ulk_relevant_case}
 Suppose~$\Gamma$ has no non-base vertices with an admissible decisive edge. Moreover suppose every base vertex has minimal valency. Then the up-link of~$\Gamma$ is homotopy equivalent to $\bigvee S^{d_0-V}$.
\end{lemma}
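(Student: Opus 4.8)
The plan is to realize the up-link as a join of the ``local'' complexes $\Sid(v)$, to evaluate each factor using Lemma~\ref{local_no_udes}, and to finish with a handshake-lemma dimension count.

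First, by Lemma~\ref{global_blowups} the up-link of $\Gamma$ is homotopy equivalent to $\DBU(\Gamma)=\ast_{v\in\Gamma}\DBU(v)$, and each $|\DBU(v)|$ deformation retracts onto $\Sid(v)$; hence the up-link is homotopy equivalent to $\ast_{v\in\Gamma}\Sid(v)$. Since $S^{-1}=\emptyset$ is a unit for the join ($X\ast\emptyset\cong X$), any vertex with $\Sid(v)=\emptyset$ may simply be deleted from the join. A vertex of minimal valency admits no nontrivial blow-up at all (one of the two resulting vertices would be bivalent), so by the minimal-valency hypothesis every base vertex contributes $\Sid(v)=\emptyset$; and no edge is vertical with top $p$, so $\Sid(p)=\emptyset$ as well. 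Thus the up-link is homotopy equivalent to $\ast_{v\in W}\Sid(v)$, where $W$ denotes the set of non-base vertices of $\Gamma$ distinct from $p$.

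Next I claim that for each $v\in W$ one has $\Sid(v)\simeq\bigvee S^{val(v)-4}$. When $v$ has no decisive edge this is exactly Lemma~\ref{local_no_udes}, so the heart of the argument is to show that the two hypotheses together forbid a decisive edge at any $v\in W$. The hypothesis on non-base vertices rules out an \emph{admissible} decisive edge directly; for an inadmissible one, recall that a single edge, viewed as a one-edge tree, is an inadmissible forest precisely when it is undistinguished and both of its endpoints lie on distinguished cycles. One then shows that such a configuration at a non-base vertex cannot coexist with every base vertex having minimal valency: tracing around the distinguished cycle $C$ through $v$, the fact that the decisive edge is the unique edge at $v$ pointing toward $p$ (so that neither $C$-edge at $v$ descends) is incompatible with the base vertex of $C$ being trivalent. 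Granting this, every $v\in W$ has no decisive edge and Lemma~\ref{local_no_udes} applies, with the convention $\bigvee S^{-1}=\emptyset$ absorbing the trivalent vertices of $W$, which then drop out of the join.

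Finally, using $\ast_{i}\bigl(\bigvee S^{d_i}\bigr)\simeq\bigvee S^{(\sum_i d_i)+(k-1)}$ with $d_v=val(v)-4$ and $k=|W|$, the up-link is a wedge of spheres of dimension $\sum_{v\in W}(val(v)-4)+|W|-1$. Writing $b$ for the number of base vertices (each of valency $3$; in the degenerate case that $p$ itself is distinguished it has valency $2$ and the count below changes only cosmetically), we have $|W|=V-b-1$ and $\sum_{v\in W}val(v)=2E-val(p)-3b$, where $E$ is the number of edges of $\Gamma$. Substituting $E=V-1+(n+m)$ (from $\chi=1-(n+m)$) and $d_0=2n+2m-val(p)$, the parameter $b$ cancels and the dimension collapses to $d_0-V$, as claimed. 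The main obstacle is the middle step: genuinely reconciling the ``admissible decisive edge'' hypothesis with the hypothesis of Lemma~\ref{local_no_udes}, that is, proving no non-base vertex other than $p$ can carry a decisive edge once all base vertices have minimal valency; the first and last steps are formal.
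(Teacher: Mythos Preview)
Your proof follows the paper's exactly: split the up-link as $\ast_{v}\Sid(v)$ via Lemma~\ref{global_blowups}, discard $p$ and the (minimal-valency) base vertices as contributing $\emptyset$, apply Lemma~\ref{local_no_udes} to each remaining vertex, and count. The paper's count is marginally slicker---it keeps the base vertices in the join as $S^{val(u)-4}=S^{-1}$ factors and reads off the dimension from $\sum_{v\neq p}(val(v)-2)=d_0$---but your Euler-characteristic version is equivalent. (Minor point: your stated reason for $\Sid(p)=\emptyset$, ``no edge is vertical with top $p$'', is not quite the mechanism; the point is that any blow-up at $p$ raises $d_0$ without lowering $m_0$ or $n_0$.)

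On the admissibility issue you single out as ``the main obstacle'': you are right that there is a mismatch between the hypothesis here (no \emph{admissible} decisive edge at a non-base vertex) and that of Lemma~\ref{local_no_udes} (no decisive edge). The paper does not engage with this at all---it invokes Lemma~\ref{local_no_udes} without comment, and in the proof of Corollary~\ref{desc_lk_conn} quietly drops the word ``admissible'' when restating the condition. So your proof is on the same footing as the paper's in this respect. That said, your sketched resolution is not convincing as written: the fact that $v$ is a local minimum of the level function along its distinguished cycle $C$ does not by itself force any base vertex of $C$ to acquire extra edges, and the other endpoint of the inadmissible decisive edge may well lie on a \emph{different} distinguished cycle, so the contradiction need not occur where you locate it. If you want to genuinely close this gap you will need a sharper (and likely more global) argument; otherwise you have reproduced the paper's proof together with its small imprecision.
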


\begin{proof}
 By Lemma~\ref{global_blowups}, the up-link is homotopy equivalent to $\ast_{v\in\Gamma}\Sid(v)$. It is clear that $\Sid(p)=\emptyset$, so this is the same as $\ast_{v\neq p}\Sid(v)$. Also, each base vertex $u\neq p$ has valency $3$, so $\Sid(u)=\emptyset=S^{val(u)-4}$. Therefore by Lemma~\ref{local_no_udes} the up-link is homotopy equivalent to
 $$\ast_{v\neq p}(\bigvee S^{val(v)-4}),$$
 which is a wedge of spheres of dimension $\displaystyle (V-2)+\sum_{v\neq p}(val(v)-4)$. Observe that
 $$\sum_{v\neq p}(val(v)-2)=d_0,$$
 so this dimension equals $(V-2)+d_0-2(V-1)=d_0-V$.
\end{proof}

We can now prove our main result of this section. Here $d_w$ is the weighted degree, which recall equals $d_0-c$.

\begin{corollary}[Connectivity of descending links]\label{desc_lk_conn}
 The descending link~$\dlk(\Gamma)$ is either contractible or a wedge of spheres of dimension $d_w-1$.
\end{corollary}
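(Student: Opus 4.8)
The plan is to read the corollary off from the structural results already established, using the decomposition of $\dlk(\Gamma)$ as the simplicial join of the up-link and the down-link $|P(\Gamma)|$ (Section~\ref{sec:morse}), and then running through the cases dictated by the lemmas of this section. If $\Gamma$ has a base vertex of non-minimal valency, then Lemma~\ref{fat_base_vtx} makes the up-link contractible, so $\dlk(\Gamma)$, being a join with a contractible factor, is itself contractible; this is one of the two allowed outcomes. So assume every base vertex of $\Gamma$ has minimal valency. If in addition $\Gamma$ has a non-base vertex carrying an admissible decisive edge, then Lemma~\ref{decisive_edge} makes the down-link $P(\Gamma)$ contractible, and again $\dlk(\Gamma)$ is contractible.

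It then remains to treat the case that every base vertex has minimal valency and no non-base vertex has an admissible decisive edge, which is exactly the hypothesis of Lemma~\ref{ulk_relevant_case}. There, Lemma~\ref{ulk_relevant_case} gives that the up-link is homotopy equivalent to a wedge of $(d_0-V)$-spheres, while Proposition~\ref{dlk_conn} gives that the down-link $P(\Gamma)$ is homotopy equivalent to a (possibly empty) wedge of $(V-c-2)$-spheres. I would then invoke the standard fact that the join of a wedge of $p$-spheres with a wedge of $q$-spheres is a wedge of $(p+q+1)$-spheres --- under the usual conventions $S^{-1}=\emptyset$ and $\emptyset\ast X=X$, which absorb the degenerate sub-cases where one of the two factors is empty --- to conclude that $\dlk(\Gamma)$ is a wedge of spheres of dimension $(d_0-V)+(V-c-2)+1 = d_0-c-1 = d_w-1$, using $d_w=d_0-c$. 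Together with the two contractible cases, this is the claim.

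I do not anticipate a genuine obstacle here: all the work is in the preceding lemmas, and this step is essentially bookkeeping. The only place that calls for a little care is the degenerate cases --- when the up-link or the down-link is empty, or when $d_w=0$ so that ``wedge of $(d_w-1)$-spheres'' means $S^{-1}=\emptyset$ --- where I would check directly that the numerology is consistent: if $V-c-2=-1$ then $d_0-V=d_w-1$, and if $d_0-V=-1$ then $V-c-2=d_w-1$, exactly as happens implicitly in the base case of the proof of Proposition~\ref{dlk_conn}.
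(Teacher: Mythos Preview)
Your proposal is correct and follows essentially the same approach as the paper: split off the contractible cases via Lemma~\ref{fat_base_vtx} and Lemma~\ref{decisive_edge}, then in the remaining case combine Proposition~\ref{dlk_conn} and Lemma~\ref{ulk_relevant_case} and add the sphere dimensions under join. The paper phrases the case split slightly more compactly (``assume neither factor is contractible'') and does not spell out the degenerate $S^{-1}$ cases, but the argument is the same.
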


\begin{proof}
 Assume that neither the up-link nor down-link is contractible. Then every base vertex has minimal valency, and no non-base vertex of~$\Gamma$ has a decisive edge. By Proposition~\ref{dlk_conn}, $P(\Gamma)\simeq\bigvee S^{V-c-2}$, and by Lemma~\ref{ulk_relevant_case} the up-link is homotopy equivalent to~$\bigvee S^{d_0-V}$. Hence~$\dlk(\Gamma)$ is homotopy equivalent to
 $$\left(\bigvee S^{V-c-2}\right)\ast\left(\bigvee S^{d_0-V}\right)\simeq\bigvee S^{V-c-2+d_0-V+1}=\bigvee S^{d_0-c-1}=\bigvee S^{d_w-1}.$$
\end{proof}

\subsection{Connectivity of sublevel sets}\label{sec:sublevel_conn}

Using Lemma~\ref{fat_base_vtx} and Corollary~\ref{desc_lk_conn}, we can now finally prove that the sublevel sets $\nabla K_{n,k}^m$ are highly connected, generalizing Hatcher and Vogtmann's Degree Theorem. Recall that the weighted degree~$d_w$ of a graph can never exceed $N=2n+m-1$. Moreover, $d_w=N$ if and only if the basepoint~$p$ has minimal valency and is a base vertex.

\begin{theorem}[Generalized Degree Theorem]\label{thrm:sublev_conn}
 For each~$0\le k < N$, $\nabla K_{n,k}^m$ is $(k-1)$-connected.
\end{theorem}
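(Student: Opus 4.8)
The strategy is the standard Morse-theoretic one set up in the excerpt: exploit the contractibility of $\Delta K_n^m$ (Proposition~\ref{bcv_prop}) together with the height function $h$, and apply \cite[Corollary~2.6]{bestvina97}. Concretely, since $\nabla K_{n,k}^m$ is the sublevel set of $\Delta K_n^m$ cut out by $h\le(0,-1,k+m+1,-1,0,\dots)$, and $\Delta K_n^m$ is contractible, it suffices to show that for every vertex $\Gamma$ lying \emph{above} this sublevel set — that is, every $\Gamma\in\Delta K_n^m$ with $h(\Gamma)>(0,-1,k+m+1,-1,0,\dots)$ — the descending link $\dlk(\Gamma)$ is $(k-1)$-connected. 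Building up $\nabla K_{n,k}^m$ from the bottom by attaching these vertices along their descending links then shows the inclusion $\nabla K_{n,k}^m\hookrightarrow\Delta K_n^m$ is $k$-connected, and since the target is contractible, $\nabla K_{n,k}^m$ is $(k-1)$-connected.

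**Key steps.** First I would identify which vertices $\Gamma$ lie above the relevant sublevel set and split into cases according to whether $m_0(\Gamma)>0$ (i.e.\ the basepoint is distinguished, equivalently $c<m$) or $m_0(\Gamma)=0$ but the weighted degree $d_w(\Gamma)>k$. Second, for any such $\Gamma$, apply Corollary~\ref{desc_lk_conn}: $\dlk(\Gamma)$ is either contractible — in which case it is trivially $(k-1)$-connected — or a wedge of spheres of dimension $d_w(\Gamma)-1$. In the latter case I must show $d_w(\Gamma)-1\ge k-1$, i.e.\ $d_w(\Gamma)\ge k$. When $m_0(\Gamma)=0$ this is exactly the condition that $\Gamma$ is above the sublevel set (using $d_0=d_w+m$ when $m_0=0$). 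When $m_0(\Gamma)>0$ I would invoke Lemma~\ref{fat_base_vtx}: if the basepoint is a base vertex of non-minimal valency the descending link is contractible and we are done; and if $p$ is a base vertex of minimal valency then $d_w(\Gamma)=N>k$, again giving the bound. So in every non-contractible case $d_w(\Gamma)\ge k$ and $\dlk(\Gamma)$ is $(d_w(\Gamma)-1)$-connected, hence $(k-1)$-connected. Finally I would assemble these local statements via \cite[Corollary~2.6]{bestvina97} (after checking $h$ is a genuine height function in the sense of \cite{bestvina97}, which was already observed in Section~\ref{sec:morse}).

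**Main obstacle.** The delicate point is the bookkeeping around the basepoint: the sublevel-set description $h\le(0,-1,k+m+1,-1,0,\dots)$ is phrased in terms of $d_0$, while Corollary~\ref{desc_lk_conn} delivers spheres of dimension $d_w-1=d_0-c-1$, and $c$ itself varies with $\Gamma$. I would need to argue carefully that a vertex $\Gamma$ failing to lie in $\nabla K_{n,k}^m$ either has a distinguished basepoint (so Lemma~\ref{fat_base_vtx} or the $d_w=N$ observation applies) or satisfies $d_w(\Gamma)>k$ outright, so that the wedge-of-spheres dimension is always at least $k-1$. The edge cases — $p$ bivalent and distinguished, graphs with $c=m-1$, and the boundary behavior as $k\to N$ — are where I expect to spend the most care, but each reduces to the already-established Corollary~\ref{desc_lk_conn} and Lemma~\ref{fat_base_vtx} once the degree arithmetic is tracked correctly.
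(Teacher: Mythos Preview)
Your approach is correct and essentially identical to the paper's: both reduce via \cite[Corollary~2.6]{bestvina97} to checking that $\dlk(\Gamma)$ is $(k-1)$-connected for each $\Gamma\notin\nabla K_{n,k}^m$, handle the case $d_w(\Gamma)>k$ by Corollary~\ref{desc_lk_conn}, and dispose of the remaining case (where $m_0=1$ and $d_w\le k<N$, forcing the base vertex $p$ to have non-minimal valency) by Lemma~\ref{fat_base_vtx}; your case split by $m_0$ rather than by $d_w$ is just a reorganization of the same dichotomy. One cosmetic slip: a wedge of $(d_w-1)$-spheres is only $(d_w-2)$-connected, so the inequality you actually need is $d_w-1\ge k$ (not $d_w-1\ge k-1$), but this is precisely what your established bound $d_w>k$ delivers, so the argument is unaffected.
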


\begin{proof}
 Since~$\Delta K_n^m$ is contractible, it suffices by \cite[Corollary~2.6]{bestvina97} to show that for any vertex~$\Gamma$ in $\Delta K_n^m \setminus \nabla K_{n,k}^m$, the descending link~$\dlk(\Gamma)$ is at least~$(k-1)$-connected. Let~$\Gamma$ be such a vertex, so either $d_w(\Gamma)>k$, or else $d_w(\Gamma) \le k$ and~$m_0(\Gamma)=1$. In the former case,~$\dlk(\Gamma)$ is $(k-1)$-connected by Corollary~\ref{desc_lk_conn}. In the latter case, the basepoint~$p$ is a base vertex, and since $d_w(\Gamma)<N$,~$p$ has non-minimal valency, so by Lemma~\ref{fat_base_vtx},~$\dlk(\Gamma)$ is contractible.
\end{proof}

\begin{remark}[Concluding remarks]\label{concl_rmk}
 We conclude with some questions that now naturally arise. First, the stable rational homology of $\SAut_n^0$ in~$n$ is trivial, and the rational homology of $\SAut_0^m$ is trivial in every dimension, so it seems likely that the stable homology in~$m$ and~$n$ is always trivial; is this indeed the case? Some additional evidence for this is Theorem~7.4 in \cite{jensen04}, which implies that $H_1(P\SAut_n^m;\Q)=0$ for any $n>2$ and any $m\ge 0$. Second, there exist examples where $H_i(\SAut_n^0;\Q)=\Q$, but when can non-trivial rational homology occur in general, e.g., if $m>0$? This is an interesting question for outer automorphisms as well. Third, when $n=0$ or~$m=0$, we have stable integral homology, so an obvious question is whether this holds in general.
\end{remark}

% --------------------------------------------------------------------

\bibliographystyle{alpha}
\bibliography{bibdata}

% --------------------------------------------------------------------

\end{document}